\newcommand{\R}{\mathbb{R}}
\newcommand{\Q}{\mathbb{Q}}
\newcommand{\Z}{\mathbb{Z}}
\newcommand{\F}{\mathbb{F}}
\newcommand{\GL}{\mathit{GL}}
\newcommand{\SL}{\mathit{SL}}
\newcommand{\PSL}{\mathit{PSL}}
\newcommand{\ab}{\mathrm{ab}}
\newcommand{\orb}{\mathrm{orb}}
\newcommand{\Int}{\operatorname{Int}}
\newcommand{\KT}{\mathrm{KT}}
\newcommand{\rank}{\operatorname{rank}}
\newcommand{\ang}[1]{\langle#1\rangle}
\newtheorem{theorem}{Theorem}[section]
\newtheorem{proposition}[theorem]{Proposition}
\newtheorem{corollary}[theorem]{Corollary}
\newtheorem{lemma}[theorem]{Lemma}
\newtheorem{claim}[theorem]{Claim}
\newtheorem{problem}[theorem]{Problem}
\theoremstyle{definition}
\newtheorem{definition}[theorem]{Definition}
\theoremstyle{remark}
\newtheorem{remark}[theorem]{Remark}
\begin{document}

\title{On the genera of symmetric unions of knots}

\author{Michel Boileau}
\address{Aix-Marseille University, CNRS \\
Centrale Marseille, I2M, 3 Pl. Victor Hugo, 13003 Marseille \\
France}
\email{michel.boileau@univ-amu.fr}

\author{Teruaki Kitano}
\address{Department of Information Systems Science, Faculty of Science and Engineering, Soka University \\
Tangi-cho 1-236, Hachioji, Tokyo 192-8577 \\
Japan}
\email{kitano@soka.ac.jp}

\author{Yuta Nozaki}
\address{
Faculty of Environment and Information Sciences, Yokohama National University \\
79-7 Tokiwadai, Hodogaya-ku, Yokohama, 240-8501 \\
Japan\vspace{-0.6em}}
\address{
SKCM$^2$, Hiroshima University \\
1-3-2 Kagamiyama, Higashi-Hiroshima City, Hiroshima, 739-8511 \\
Japan}
\email{nozaki-yuta-vn@ynu.ac.jp}

\subjclass[2020]{Primary 57K10, 57K14, Secondary 57M05, 57R18}

\keywords{Symmetric union, knot group, genus, twisted Alexander polynomial}

\maketitle

\begin{abstract}
In the study of ribbon knots, Lamm introduced symmetric unions inspired by earlier work of Kinoshita and Terasaka.
We show an identity between the twisted Alexander polynomials of a symmetric union and its partial knot.
As a corollary, we obtain an inequality concerning their genera.
It is known that there exists an epimorphism between their knot groups, and thus our inequality provides a positive answer to an old problem of Jonathan Simon in this case.
Our formula also offers a useful condition to constrain possible symmetric union presentations of a given ribbon knot.
It is an open question whether every ribbon knot is a symmetric union.
\end{abstract}

\setcounter{tocdepth}{1}
\tableofcontents

\section{Introduction}
\label{sec:intro}
Let $K$ be a knot in the $3$-sphere $S^3$ and let $E(K)$ denote the exterior $S^3\setminus \Int N(K)$ of $K$, where $N(K)$ is a tubular neighborhood of $K$.
We write $G(K)$ for the knot group of $K$, that is, the fundamental group $\pi_1(E(K))$.
For two knots $K$ and $K'$, if there exists an epimorphism $G(K)\twoheadrightarrow G(K')$, then it is natural to expect that $K$ is more complicated than $K'$ in some sense.
Let $g(K)$ denote the genus of $K$.
In Kirby's list \cite{Kir97}, Jonathan Simon posed the following long standing problem.

\begin{problem}[{\cite[Problem~1.12(B)]{Kir97}}]
\label{prob:Simon}
If there is an epimorphism $\varphi\colon G(K) \twoheadrightarrow G(K')$, does it imply that $g(K)\ge g(K')$?
\end{problem}

Note that Simon mentioned Problem~\ref{prob:Simon} in \cite[p.~410]{NAMS76} as well.
A positive answer to this problem is known when $\deg\Delta_{K'}(t)=2g(K')$ (e.g., $K'$ is fibered or alternating), where $\Delta_{K'}(t)$ denotes the Alexander polynomial.
Indeed, the existence of epimorphisms implies that $\Delta_{K}(t)$ is divisible by $\Delta_{K'}(t)$ (see \cite[Exercise~9 in Chapter~VII]{CrFo77}).
In \cite[Corollary~6.22]{Gab83I} and \cite[Theorem~8.8]{Gab87III}, Gabai gave an affirmative answer when the epimorphism is induced by a proper map of non-zero degree between the knot exteriors.
Also, Friedl and L\"uck~\cite{FrLu19PLMS} solved Problem~\ref{prob:Simon} when the knot group $G(K')$ is residually locally indicable elementary amenable (e.g., $K'$ is fibered).

Moreover, we observe that when the epimorphism $\varphi$ is meridian-preserving and $K'$ is prime then $g(K)\ge g(K')$ if $\varphi(\lambda_K) \neq 1$ in Corollary~\ref{cor:meridian}.
This suggests that it is hard to analyze the case $\varphi(\lambda_K)=1$, which happens, for instance, when $K$ is the connected sum of $K'$ and its mirror image.
Indeed, we have a map $f\colon E(K'\sharp (-K'^\ast)) \to E(K')$ of degree zero such that $f$ is orientation-preserving on $E(K') \subset E(K'\sharp (-K'^\ast))$ and not on $E(-K'^\ast) \subset E(K'\sharp (-K'^\ast))$.

In this paper, we focuses on the \emph{symmetric union} of a knot with its mirror image introduced by Kinoshita and Terasaka~\cite{KiTe57} and generalized by Lamm~\cite{Lam00} as illustrated in Figure~\ref{fig:symmetric_union}.

\begin{figure}[h]
 \centering \includegraphics[width=\textwidth]{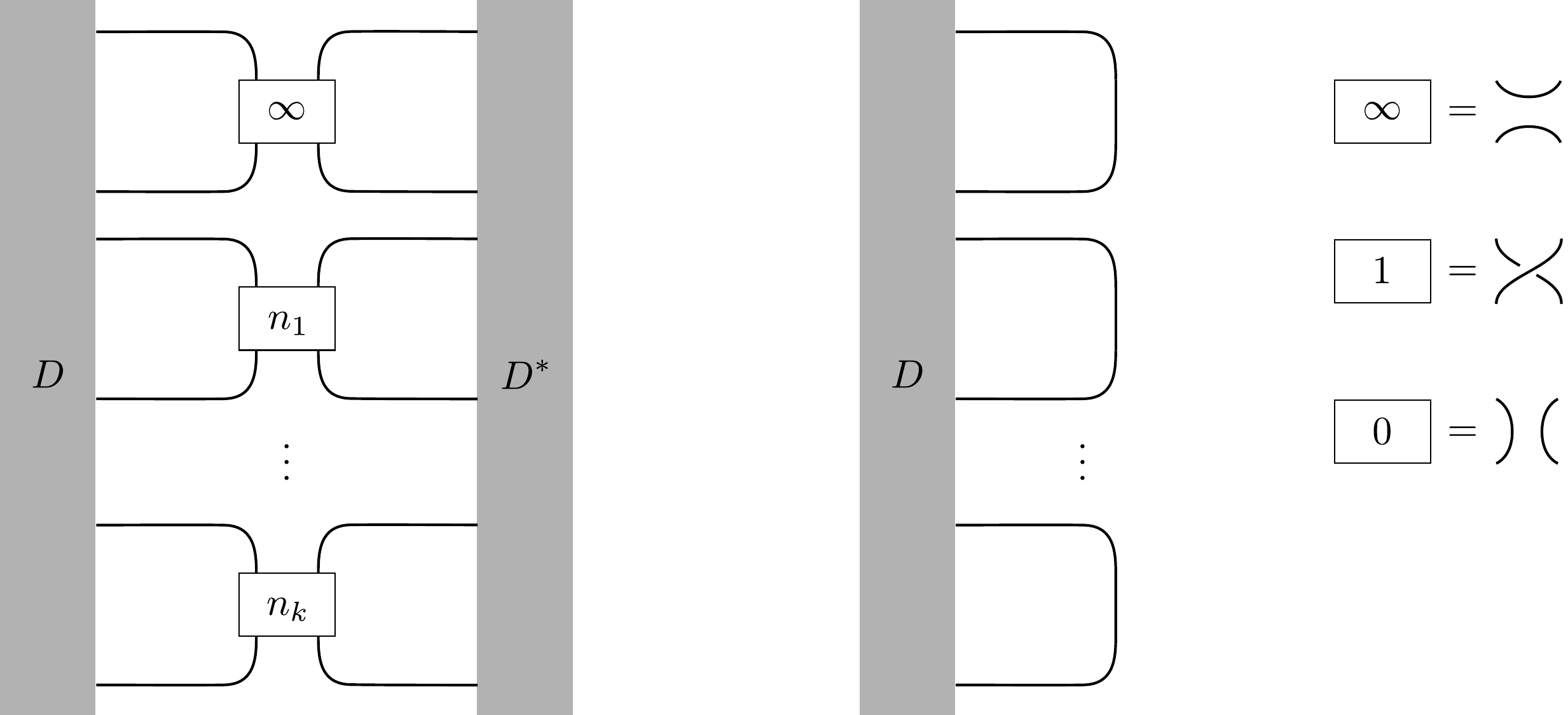}
 \caption{Symmetric union $(D \cup D^\ast)(\infty, n_1, \dots, n_k)$ and its partial knot.}
 \label{fig:symmetric_union}
\end{figure}

\begin{definition}
\label{def:symmetric_union}
Let $D$ be an unoriented planar diagram of a knot $K_D$ and let $D^\ast$ be the diagram obtained from $D$ by reflecting $D$ across an axis in the plane.
Let $B_0, B_1,\dots, B_k$ be balls along the axis, each of which is invariant by the reflection and intersects $D$ in a trivial arc.
One replaces the trivial tangle $(B_0, B_0 \cap (D \cup D^\ast))$ by a $\infty$-tangle to get the connected sum of the diagrams $D$ and $-D^\ast$.
For $i=1,\dots,k$, one replaces each trivial tangle $(B_i, B_i \cap (D\sharp D^\ast))$ by an $n_i$-tangle, where $n_i \in \Z$.
The knot diagram $(D \cup D^\ast)(\infty, n_1, \dots, n_k)$ obtained from $D\cup D^\ast$ in this way is called a \emph{symmetric union} of the diagram $D$ and $D^\ast$.
A knot which admits a diagram $(D \cup D^\ast)(\infty, n_1, \dots, n_k)$ is said to admit a \emph{symmetric union presentation} with \emph{partial knot} $K_D$, where $K_D$ corresponds to the closure of the diagram $D$ such that $(D \cup D^\ast)(0, 0, \dots, 0) = K_D \cup K_{D}^\ast$.
\end{definition}

When there is a single tangle replacement, the construction is due to Kinoshita and Terasaka~\cite{KiTe57}.
The extension to multiple symmetric tangle replacements is due to Lamm~\cite{Lam00}.
The symmetric union construction is not unique and the isotopy type of the knot $K = D \cup D^\ast(\infty, n_1, \dots, n_k)$ depends on both the diagram $D$ and the location of the tangle replacements.
When the partial knot $K_D$ is oriented and all the twist numbers $n_1,\dots,n_k$ are even, the symmetric union $D \cup D^\ast(\infty, n_1, \dots, n_k)$ inherits an orientation from the connected sum $D \sharp (-D^\ast)$, but when some twists $n_i$ are odd, the orientation of $D \cup D^\ast(\infty, n_1, \dots, n_k)$ is not well-defined.
A symmetric union $D \cup D^\ast(\infty, n_1, \dots, n_k)$ is said to be \emph{even} if all the $n_i$ are even, and then we denote it by $D \cup (-D^\ast)(\infty, n_1, \dots, n_k)$.
Otherwise the symmetric union is said to be \emph{skew}.

The even symmetric union construction, which is a generalization of the connected sum of a knot with its mirror image, provides plenty of examples of pairs of knots $K$ and $K'$ with an epimorphism $\varphi\colon G(K) \twoheadrightarrow G(K')$ satisfying $\varphi(\lambda_K)=1$ (see Proposition\ref{prop:epimorphism}).
However, the genus of a symmetric union was not being much studied.
Our result on the twisted Alexander polynomials of even symmetric unions allows to get a lower bound on the genus of an even symmetric union and to give a positive answer to Jonathan Simon's problem in this case.

The symmetric union construction always produces ribbon knots and the following problem is still open.

\begin{problem}[{\cite{Lam00}, \cite{BeFe24}}]
Does every ribbon knot admit a symmetric union presentation?
\end{problem}

Prime ribbon knots up to $10$ crossings and ribbon $2$-bridge knots admit symmetric union presentations due to Lamm~\cite{Lam00,Lam21JKTR}.
This is true also for all but $15$ prime ribbon knots with $11$ and $12$ crossings by Seeliger~\cite{See14} (see \cite{Lam21EM}).
In fact, there is no obstruction known for a ribbon knot to be a symmetric union. 
Moreover there is no a priori upper bound for the number of required twist regions.
However, our result gives some useful conditions to restrain possible even symmetric union presentations of a given knot in Section~\ref{sec:montesinos}.

Despite the dependence on the diagram, a classical fact about symmetric unions is that the Alexander polynomial of it depends only on the parity of the twist numbers as shown in \cite[Theorem 2.4]{Lam00}.
In particular, for an even symmetric union one has the following property.

\begin{proposition}
\label{prop:alexander}
Let $K$ be a knot admitting an even symmetric union presentation with partial knot $K_D$.
Then $\Delta_K(t) = \Delta_{K_D}(t)^{2}$, up to multiplication by a unit in $\Z[t,t^{-1}]$.
\end{proposition}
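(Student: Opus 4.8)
The plan is to deduce the statement directly from the classical result of Lamm \cite[Theorem~2.4]{Lam00}, which asserts that the Alexander polynomial of a symmetric union depends only on the parities of the twist numbers $n_1,\dots,n_k$. Since $K$ admits an even symmetric union presentation, every $n_i$ is even, so by Lamm's theorem $\Delta_K(t) = \Delta_{K'}(t)$ up to a unit, where $K'$ denotes the knot presented by the symmetric union obtained from the same diagram $D$ by setting all the twist numbers to zero, i.e.\ $K' = (D \cup D^\ast)(\infty, 0, \dots, 0)$.

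The next step is to identify $K'$ explicitly. By Definition~\ref{def:symmetric_union}, each ball $B_i$ with $i \ge 1$ meets $D \sharp D^\ast$ in a trivial arc, and the $0$-tangle is precisely this trivial tangle; hence replacing each $B_i$ ($i \ge 1$) by a $0$-tangle leaves the diagram unchanged. What remains is the $\infty$-replacement at $B_0$, which by definition produces the connected sum of $D$ and $-D^\ast$. Therefore $K' = K_D \sharp (-K_D^\ast)$, the connected sum of $K_D$ with the reverse of its mirror image.

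Finally I would compute $\Delta_{K'}(t)$ from two standard facts: (i) the Alexander polynomial is multiplicative under connected sum, so $\Delta_{K_D \sharp (-K_D^\ast)}(t) = \Delta_{K_D}(t)\cdot\Delta_{-K_D^\ast}(t)$; and (ii) the Alexander polynomial is invariant, up to a unit in $\Z[t,t^{-1}]$, under both orientation reversal and mirror imaging, so $\Delta_{-K_D^\ast}(t) = \Delta_{K_D}(t)$. Combining these yields $\Delta_{K'}(t) = \Delta_{K_D}(t)^2$, and hence $\Delta_K(t) = \Delta_{K_D}(t)^2$ up to multiplication by a unit, as claimed.

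The entire nontrivial content is carried by Lamm's invariance theorem, which is invoked here as a black box; the remaining steps are routine. The only point requiring genuine care is the identification of the all-zero symmetric union with the connected sum $K_D \sharp (-K_D^\ast)$, which must be read off correctly from the definition, distinguishing the $\infty$-tangle at $B_0$ (producing the connected sum) from the $0$-tangles at the balls $B_1,\dots,B_k$ (leaving the diagram unchanged).
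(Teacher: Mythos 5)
Your proof is correct and takes essentially the same route as the paper, which likewise obtains the proposition as the even-parity special case of Lamm's theorem \cite[Theorem~2.4]{Lam00}, reducing to the all-zero twist diagram $(D\cup D^\ast)(\infty,0,\dots,0)=K_D\sharp(-K_D^\ast)$ and using multiplicativity of $\Delta$ under connected sum together with its invariance under mirroring and reversal. The paper leaves these routine final steps implicit; you have merely spelled them out, correctly distinguishing the $\infty$-tangle at $B_0$ from the $0$-tangles at $B_1,\dots,B_k$.
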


Another useful property relating an even symmetric union presentation to its partial knot is given by the following proposition, which is due to Michael Eisermann.

\begin{proposition}[{\cite[Theorem~3.3]{Lam00}}]
\label{prop:epimorphism}
Let $K$ be a knot admitting an even symmetric union presentation with partial knot $K_D$.
Then there is an epimorphism $\varphi_D\colon G(K) \twoheadrightarrow G(K_D)$ which sends a meridian of $K$ to a meridian of $K_D$ and which kills the longitude.
\end{proposition}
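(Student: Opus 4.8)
The plan is to build $\varphi_D$ explicitly from the Wirtinger presentations associated with a symmetric union diagram. Fix an even symmetric union diagram $\mathcal{D}=(D\cup D^\ast)(\infty,n_1,\dots,n_k)$ of $K$. It is symmetric under the reflection $\sigma$ in the vertical axis that interchanges the half-diagram $D$ with its mirror image $D^\ast$, and I orient $K$ as the connected sum $D\sharp(-D^\ast)$, which is possible precisely because all the $n_i$ are even. The arcs of $\mathcal{D}$ give the Wirtinger generators of $G(K)$ and fall into three classes: the arcs of $D$, their $\sigma$-images in $D^\ast$, and the arcs lying in the tangle balls $B_0,\dots,B_k$ along the axis. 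Writing $\bar{x}_1,\dots,\bar{x}_m$ for the Wirtinger generators of $G(K_D)$ coming from the arcs of the closed-up diagram $D$, I would define $\varphi_D$ by folding: an arc of $D$ and its mirror partner in $D^\ast$ are both sent to the same generator $\bar{x}_i$, and the generators from the axial tangles are sent to the generators dictated by the trivial closure that produces $K_D$ from $D$. Every $\bar{x}_i$ is then hit, so $\varphi_D$ is onto, and since the $x_i$ are meridians of $K$ mapping to the meridians $\bar{x}_i$, the map is meridian-preserving.

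The first substantive point is that $\varphi_D$ respects the relations. A crossing of $\mathcal{D}$ inside $D$ maps to the identical Wirtinger relation of $G(K_D)$. For its mirror crossing inside $D^\ast$ one has to combine the reversal of the crossing sign under $\sigma$ with the reversal of both strand orientations coming from the $-D^\ast$ summand; these two reversals cancel, so the folded relation is again exactly the relation of $G(K_D)$ at the corresponding crossing. This is the first place the even hypothesis is used, through the existence of the coherent orientation. The remaining relations are those of the crossings inside the axial tangles $B_0,\dots,B_k$, where the two strands fold onto arcs that are glued up trivially in $K_D$; one must verify that each such relation is a consequence of the relations of $G(K_D)$. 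Because each $n_i$ is even, the two strands of every twist region return to their original sides, so these relations collapse to trivial or already-present ones. Checking this axial case is the step I expect to be the main obstacle, and it is exactly what fails for skew unions with some $n_i$ odd.

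It remains to prove $\varphi_D(\lambda_K)=1$. Since $\lambda_K$ commutes with a meridian $\mu_K$ and $\varphi_D(\mu_K)$ is a meridian $\mu_{K_D}$, the image $\varphi_D(\lambda_K)$ lies in the centralizer of $\mu_{K_D}$, which for a nontrivial knot is the peripheral subgroup $\langle\mu_{K_D},\lambda_{K_D}\rangle\cong\Z^2$ (the unknot case being trivial); write $\varphi_D(\lambda_K)=\mu_{K_D}^{a}\lambda_{K_D}^{b}$. Reading $\lambda_K$ as a longitudinal word and traversing $K$ once, the $D$-part contributes the longitudinal word of $K_D$ while the $-D^\ast$-part, being its orientation-reversed mirror, contributes the reverse; hence the two halves wind longitudinally in opposite directions and $b=0$. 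For the meridional coordinate, $\lambda_K$ is null-homologous in $E(K)$, so $\varphi_D(\lambda_K)$ is null-homologous in $E(K_D)$, which forces $a=0$ in $H_1(E(K_D))\cong\Z$. Therefore $\varphi_D(\lambda_K)=1$. Finally, since knot exteriors are aspherical, $\varphi_D$ is induced by a map $E(K)\to E(K_D)$; as it sends $\mu_K\mapsto\mu_{K_D}$ and $\lambda_K\mapsto 1$, this map restricts on the boundary torus to a rank-one map and hence has degree zero, in accordance with the degree-zero maps mentioned in the introduction.
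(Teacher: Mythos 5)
Your construction of $\varphi_D$ is exactly the paper's (and Lamm--Eisermann's): fold each arc of $D^\ast$ onto its mirror partner in $D$, send every arc in an axial twist region to the single corresponding generator of $G(K_D)$, and observe that evenness makes the mirror-crossing relations and the twist-region relations go to relations of $G(K_D)$ or to trivial words. Up to that point your proposal matches the paper, which defines $\varphi_D$ via its Figure~\ref{fig:Lamm_epi} and defers the verification to \cite[Theorem~3.3]{Lam00}.

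The genuine gap is in your argument that the longitude dies. You assert that $\varphi_D(\lambda_K)$ lies in $\pi_1(\partial E(K_D))$ because ``the centralizer of $\mu_{K_D}$, which for a nontrivial knot is the peripheral subgroup.'' This is false without a primality hypothesis: for a composite knot $K_D = K_1 \sharp K_2$ one has $G(K_D) = G(K_1) *_{\ang{\mu}} G(K_2)$, both factor longitudes $\lambda_1,\lambda_2$ commute with $\mu$, neither is peripheral in $E(K_D)$, and the centralizer of $\mu$ is the non-abelian group $\Z^2 *_{\Z} \Z^2$. Nothing in Definition~\ref{def:symmetric_union} prevents the partial knot from being composite, so your argument fails in exactly the generality required. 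Note that the paper's own Lemma~\ref{lem:centralizer} assumes $K'$ prime, and even then its proof needs Simon's theorem and JSJ theory, so this is not a step one can assert in passing. Moreover, even granting peripherality, your deduction of $b=0$ from ``the two halves wind longitudinally in opposite directions'' is only a cancellation visible in homology, and homology cannot detect $b$: since $\lambda_{K_D}$ is null-homologous in $E(K_D)$, no homological count distinguishes $\mu_{K_D}^{a}$ from $\mu_{K_D}^{a}\lambda_{K_D}^{b}$.

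The repair is to drop the centralizer detour and compute $\varphi_D(\lambda_K)$ directly as a word, which is what Lamm's proof does. Traverse $K$ once starting near the connected-sum ball $B_0$: the undercrossings in the $D$-half contribute a word $\ell$ with $\lambda_{K_D} \doteq \ell\,\mu^{-w(D)}$; since mirroring reverses every crossing sign while the folding identifies mirror generators, the $D^\ast$-half contributes the reversed word with inverted exponents, i.e.\ the identification $G(-K_D^\ast)\cong G(K_D)$ sends $\mu\mapsto\mu$ and $\lambda\mapsto\lambda^{-1}$; and the letters from each even twist region are powers of a single meridional generator which cancel against the writhe normalization. The whole expression then telescopes to $1$, giving $\varphi_D(\lambda_K)=1$ \emph{exactly}, with no peripherality assumption needed. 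In the case $k=0$ this is transparent: $\varphi_D(\lambda_K)=\varphi_D(\lambda_{K_D}\,\lambda_{-K_D^\ast})=\lambda_{K_D}\lambda_{K_D}^{-1}=1$. This direct computation is both stronger and simpler than the route you took, and it is the argument the paper implicitly relies on by citing \cite[Theorem~3.3]{Lam00}.
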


The epimorphism $\varphi_{D}$ is defined in \cite[Figure~11]{Lam00} by mapping the generators of the Wirtinger presentation of the oriented diagram $D \cup (-D^\ast)(\infty, n_1, \dots, n_k)$ of the even symmetric union $K$ to the corresponding Wirtinger generators of the oriented diagram $D$ of the partial knot $K_D$, according to the Figure~\ref{fig:Lamm_epi}.
The Wirtinger relations are clearly satisfied, and thus this map induces a well-defined epimorphism between the knot groups.

\begin{figure}[h]
 \centering \includegraphics[width=0.9\textwidth]{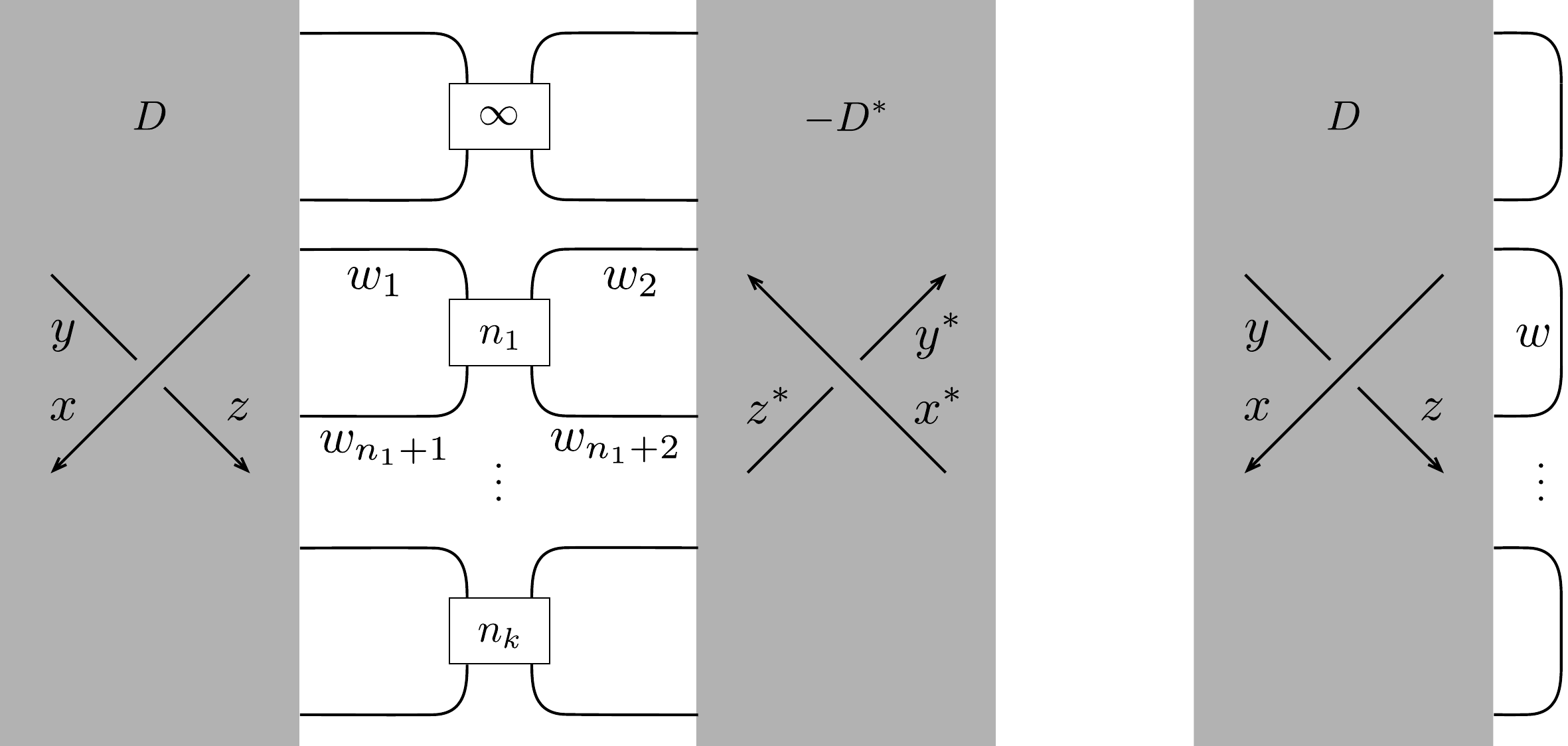}
 \caption{Wirtinger generators $x, y, z, x^\ast, y^\ast, z^\ast, $ satisfy $yx = xz$ and $y^\ast x^\ast = x^\ast z^\ast$, and $\varphi_D(w_i)=w$ when $n_1\geq 0$.}
 \label{fig:Lamm_epi}
\end{figure}

In this article we use the epimorphism $\varphi_D$ given by Proposition~\ref{prop:epimorphism} to generalize Proposition~\ref{prop:alexander} in the setting of twisted Alexander polynomials.
A representation $\rho_D\colon G(K_D) \to \GL(d,\F)$ over a field $\F$, induces a representation $\rho_D\circ\varphi_D\colon G(K)\to \GL(d,\F)$.
We write $\mu_D\in G(K_D)$ for a fixed meridian element and prove the following formula.

\begin{theorem}\label{thm:TAP} 
Let $K$ be a knot admitting an even symmetric union presentation with partial knot $K_D$.
Let $\varphi_D\colon G(K) \twoheadrightarrow G(K_D)$ be the epimorphism given by Proposition~\ref{prop:epimorphism}.
For a representation $\rho_D\colon G(K_D) \to \GL(d,\F)$, the equality
\[
\Delta_{K,\rho_D\circ\varphi_D}(t) = \Delta_{K_D,\rho_D}(t)^2 \det(\rho_D(\mu_D)t-I_d)
\]
holds up to multiplication by a unit in $\F[t,t^{-1}]$.

In particular, $\deg\Delta_{K,\rho_D\circ\varphi_D}(t) = 2\deg\Delta_{K_D,\rho_D}(t) +d$, where $\deg$ is defined to be the degree of a numerator minus that of a denominator.
\end{theorem}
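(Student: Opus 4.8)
The plan is to exploit the fact that the composite representation $\rho_D \circ \varphi_D$ factors through the epimorphism $\varphi_D$ of Proposition~\ref{prop:epimorphism}, together with the fact that $\varphi_D$ is meridian-preserving. Write $\alpha\colon G(K) \to \ang{t}$ and $\alpha_D\colon G(K_D) \to \ang{t}$ for the abelianizations sending a meridian to $t$. The meridian-preserving property gives $\alpha = \alpha_D \circ \varphi_D$, so the twisting homomorphism $\Phi = (\rho_D \circ \varphi_D) \otimes \alpha$ used to compute $\Delta_{K, \rho_D \circ \varphi_D}$ equals $\Phi_D \circ \varphi_D$, where $\Phi_D = \rho_D \otimes \alpha_D$. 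Thus every entry of the twisted Fox Jacobian of $K$ is obtained by first pushing a group element of $G(K)$ through $\varphi_D$ into $G(K_D)$ and then applying $\Phi_D$, and the whole computation takes place over $M_d(\F[t,t^{-1}])$. I would then prove the identity in two steps: first reduce to the connected-sum case by showing that the even twist regions are invisible to $\Delta_{K, \rho_D \circ \varphi_D}$, and then evaluate that base case directly.

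For the first step I would establish a twist-invariance lemma: if two even symmetric unions of $D \cup (-D^\ast)$ have twist vectors differing only by replacing some $n_i$ by $n_i-2$, then their twisted Alexander polynomials for $\rho_D\circ\varphi_D$ agree up to a unit. Locally, a twist region is a two-strand tangle whose strands come from $D$ and from $D^\ast$; under the folding map $\varphi_D$ these two strands are sent to one and the same meridian of $K_D$, hence carry the same $\Phi_D$-color. A direct computation then shows that the $2d\times 2d$ transfer matrix of a full twist across two same-colored strands effects a determinant-preserving sequence of row and column operations on the twisted Alexander matrix, so each $n_i$ may be reduced to $0$ and $\Delta_{K,\rho_D\circ\varphi_D}(t)$ agrees with that of $K_0 = K_D \sharp (-K_D^\ast)$, the knot obtained from the $\infty$-tangle alone. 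This is the twisted refinement of the parity phenomenon underlying Proposition~\ref{prop:alexander}, and proving it rigorously is the main obstacle: one must verify that it is precisely the \emph{even} parity of each twist region, and not merely its presence, that makes the local block contribute trivially to the determinant (an odd twist would swap the two colors and break the folding).

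For the base case, $G(K_0) = G(K_D) \ast_{\ang{\mu}} G(-K_D^\ast)$ is an amalgamated free product over a meridian, so I would apply the connected-sum (Mayer--Vietoris / Reidemeister torsion) formula for Wada's invariant,
\[
\Delta_{K_0, \rho_D\circ\varphi_D}(t) = \Delta_{K_D,\rho_D}(t)\cdot\Delta_{-K_D^\ast,\rho'}(t)\cdot\det(\rho_D(\mu_D)t-I_d),
\]
where $\rho'$ is the restriction of $\rho_D\circ\varphi_D$ to the second factor and the last factor is the torsion of the gluing annulus carrying the meridian $\mu$. Because $\varphi_D$ folds $D^\ast$ onto $D$, its restriction $\varphi_D|_{G(-K_D^\ast)}\colon G(-K_D^\ast)\to G(K_D)$ is the meridian-preserving isomorphism induced by the reflection $E(K_D^\ast)\to E(K_D)$, and it intertwines $\rho'$ with $\rho_D$ while preserving the $t$-grading; consequently $\Delta_{-K_D^\ast,\rho'}(t)=\Delta_{K_D,\rho_D}(t)$ up to a unit. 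Substituting this yields the asserted identity.

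Finally, the degree statement is immediate: $\det(\rho_D(\mu_D)t-I_d)$ is a polynomial in $t$ of degree $d$ with leading coefficient $\det\rho_D(\mu_D)\neq 0$ and constant term $(-1)^d\neq 0$, so it contributes exactly $+d$ to the degree. Since $\deg$ is additive on products of Laurent polynomials, we obtain $\deg\Delta_{K,\rho_D\circ\varphi_D}(t) = 2\deg\Delta_{K_D,\rho_D}(t)+d$.
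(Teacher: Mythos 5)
Your proposal is correct, but it takes a genuinely different route from the paper. The paper proves the identity by one global computation: it fixes an explicit Wirtinger presentation of $D\cup(-D^\ast)(\infty,2m_1,\dots,2m_k)$, writes the twisted Alexander matrix in block form, and performs column operations until $\det A_{z_2^\ast}$ factors as $\det(U\ Z_1\ Y_1+Y_2)\det(U\ Z_2\ Y_1+Y_2)$, each factor being identified with $\Delta_{K_D,\rho_D}(t)\det(\rho_D(\mu_D)t-I_d)$ by a separate claim using a tailored presentation of $K_D$; it never isolates a twist-removal statement or passes through the connected sum. You instead modularize: (a) a local lemma removing one full twist at a time, valid because the folding map colors both strands of a twist region by the same matrix $X=\Phi(x_i)=\Phi(x_i^\ast)$, reducing to $K_D\sharp(-K_D^\ast)$; (b) multiplicativity of Wada's invariant under connected sum with the annulus factor $\det(\rho_D(\mu_D)t-I_d)$; (c) the mirror identity $\Delta_{-K_D^\ast,\rho'}(t)\doteq\Delta_{K_D,\rho_D}(t)$, which the paper itself records in the remark following the theorem. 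Your route is conceptually more transparent — it explains the provenance of the factor $\det(\rho_D(\mu_D)t-I_d)$ as the torsion of the gluing annulus and makes the role of even parity explicit — at the cost of invoking the torsion/Mayer--Vietoris multiplicativity of Wada's invariant (standard, and available in the survey [FrVi11] the paper cites, with the harmless hypothesis $\det(\rho_D(\mu_D)t-I_d)\neq 0$, automatic since its leading coefficient is $\det\rho_D(\mu_D)$). The paper's route is self-contained Fox calculus and sidesteps the one delicate point in yours: your twist-invariance lemma compares invariants of \emph{different} knots, each with its own group and its own folding epimorphism, so it must be phrased and proved at the level of presentations; when you do write it out, the computation is exactly the paper's inline elimination, where the rows $(I,\ X-I,\ -X,\ 0)$ and $(0,\ I,\ X^{-1}-I,\ -X^{-1})$ of a full twist are removed by column operations whose local block in the outgoing columns has unit determinant $\det(-X)\det(-X^{-1})$. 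So the hard content of the two proofs coincides; you have only sketched it, which is the one step of your proposal that still requires the explicit verification, and your degree argument at the end matches the paper's.
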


\begin{remark}
While the twisted Alexander polynomial is not symmetric in general, we get $\Delta_{K_D,\rho_D}(t)^2$ since $\Delta_{K_D,\rho_D}(t) = \Delta_{-K_{D}^\ast,\rho_{D}}(t)$ up to multiplication by a unit in $\F[t,t^{-1}]$.
Indeed, the Wirtinger presentation of $-D^\ast$ is completely the same as that of $D$ and we consider the common representation $\rho_{D}$ (see Figure~\ref{fig:Lamm_epi}).
\end{remark}

Combining with \cite{FrNa15} or \cite{FrVi11}, we obtain the following inequality which implies an affirmative answer to Simon's Problem~\ref{prob:Simon} for an even symmetric union. 

\begin{corollary}
\label{cor:genus_ineq}
For a knot $K$ admitting an even symmetric union presentation with partial knot $K_D$, it holds that $g(K)\geq 2g(K_D)$.
\end{corollary}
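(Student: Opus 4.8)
The plan is to combine the degree identity from Theorem~\ref{thm:TAP} with the fact that twisted Alexander polynomials detect the Seifert genus, established by Friedl--Vidussi~\cite{FrVi11} and Friedl--Nagel~\cite{FrNa15}. I would first record the two inputs I need in the exact normalization used here (where $\deg$ is the degree of the numerator minus that of the denominator, i.e.\ the Reidemeister-torsion normalization). For any knot $J$ and any representation $\rho\colon G(J)\to\GL(d,\F)$ with $\Delta_{J,\rho}(t)\neq 0$, the Thurston-norm bound gives
\[
\deg\Delta_{J,\rho}(t)\le d\,(2g(J)-1),
\]
and, crucially, Friedl--Vidussi show this is sharp: some representation factoring through a finite group realizes equality. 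Matching conventions so that the factor $\det(\rho_D(\mu_D)t-I_d)$ in Theorem~\ref{thm:TAP} is accounted for correctly is the one point that needs genuine care.

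The main step is then a direct substitution. I would fix a genus-detecting representation $\rho_D\colon G(K_D)\to\GL(d,\F)$ for the partial knot, so that $\deg\Delta_{K_D,\rho_D}(t)=d\,(2g(K_D)-1)$. Since this polynomial is nonzero, the product formula in Theorem~\ref{thm:TAP} guarantees $\Delta_{K,\rho_D\circ\varphi_D}(t)\neq 0$, so the genus bound for $K$ may legitimately be applied to the induced representation $\rho_D\circ\varphi_D\colon G(K)\to\GL(d,\F)$. Feeding the identity $\deg\Delta_{K,\rho_D\circ\varphi_D}(t)=2\deg\Delta_{K_D,\rho_D}(t)+d$ of Theorem~\ref{thm:TAP} into $\deg\Delta_{K,\rho_D\circ\varphi_D}(t)\le d\,(2g(K)-1)$ yields
\[
2d\,(2g(K_D)-1)+d\le d\,(2g(K)-1).
\]
Dividing by $d$ and simplifying the left-hand side to $4g(K_D)-1$ gives $4g(K_D)-1\le 2g(K)-1$, that is, $g(K)\ge 2g(K_D)$.

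The only genuine obstacle is the sharpness input from \cite{FrVi11,FrNa15}, namely that some finite-image representation of $G(K_D)$ actually realizes its genus; everything else is bookkeeping. I would therefore spend most of the effort quoting the detection theorem in precisely the normalization used throughout the paper, and checking the degenerate case: when $K_D$ is trivial the claim reads $g(K)\ge 0$ and is vacuous, so one may assume $g(K_D)\ge 1$ and apply the bound with a strictly positive right-hand side. It is worth emphasizing that $\rho_D\circ\varphi_D$ need neither have finite image nor detect the genus of $K$ itself; only the one-sided upper bound for $K$ is used, and for that no sharpness is required.
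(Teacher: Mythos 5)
Your proof is correct and follows essentially the same route as the paper: both choose a genus-detecting representation $\rho_D$ of $G(K_D)$ via Friedl--Vidussi/Friedl--Nagel, apply the Thurston-norm upper bound $\deg\Delta_{K,\rho_D\circ\varphi_D}(t)\le d\,(2g(K)-1)$ to the induced representation, and substitute the degree identity from Theorem~\ref{thm:TAP} to conclude $g(K)\ge 2g(K_D)$. Your added checks (nonvanishing of the twisted polynomial via the product formula, and the trivial case $g(K_D)=0$) are points the paper leaves implicit but do not change the argument.
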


This corollary extends a result by Moore~\cite{Moo16} to any even symmetric union, where her result is for an even symmetric union with a single twist region and ``symmetric fusion number one'' by using Heegaard-Floer theory.

K\"{o}se~\cite[Theorem~E]{Kos22Thesis} showed that the ribbon Montesinos knot $11a_{201}=K(1/3,2/3,4/5)$ does not admit a symmetric union presentation with a single twist region.
The next result imposes some strong restrictions on any possible symmetric union presentation of this knot.

\begin{proposition}
\label{prop:11a_201}
If the ribbon Montesinos knot 
$11a_{201} =  K(\frac{1}{3}, \frac{2}{3}, \frac{4}{5})$ admits a symmetric union presentation, it must be a skew one with partial knot $6_1$ or $9_1$.
\end{proposition}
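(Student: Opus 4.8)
The plan is to treat the even and skew presentations separately, with the determinant of the double branched cover as the common first constraint. Since $11a_{201}=K(\tfrac13,\tfrac23,\tfrac45)$ is a Montesinos knot, I would first compute $\det(11a_{201})=|H_1(\Sigma_2(11a_{201}))|$ from the Seifert invariants of its double branched cover, which gives $\det(11a_{201})=81$. For any symmetric union one has $\det K=(\det K_D)^2$: in the even case this is Proposition~\ref{prop:alexander} evaluated at $t=-1$, and in general it is the classical (parity-independent) determinant relation, so even though the Alexander polynomial itself depends on the parity of the twists, its value at $-1$ does not. Hence every partial knot must satisfy $\det K_D=9$.

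Next I would rule out the even case. By Proposition~\ref{prop:alexander} an even presentation forces $\Delta_{11a_{201}}(t)\doteq\Delta_{K_D}(t)^2$. Reading off $\Delta_{11a_{201}}(t)$ and its breadth from the reduced alternating diagram (equivalently computing $g(11a_{201})$ via Crowell--Murasugi), I expect the breadth to be $\equiv 2\pmod 4$. Since a knot's Alexander polynomial is symmetric and hence of even breadth, $\Delta_{K_D}(t)^2$ has breadth divisible by $4$, so no square can match and no even presentation exists. As a backup not relying on the exact breadth, Corollary~\ref{cor:genus_ineq} gives $g(K_D)\le \tfrac12 g(11a_{201})$, which together with $\det K_D=9$ leaves only finitely many candidate partial knots, each of which I would eliminate by checking directly that $\Delta_{11a_{201}}$ is not their squared Alexander polynomial.

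For the skew case the even tools are unavailable — Proposition~\ref{prop:epimorphism} and Theorem~\ref{thm:TAP} require the well-defined orientation that a skew union lacks — so I would instead invoke Lamm's parity-dependent formula for the Alexander polynomial of a symmetric union. Matching the resulting odd-twist expression to the known $\Delta_{11a_{201}}(t)$ and imposing $\det K_D=9$ should cut the candidates down to the two $2$-bridge knots of determinant $9$, namely $6_1$ and $9_1$, which are indeed the only $2$-bridge knots of that determinant.

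The hard part will be the skew case, for two reasons. First, Lamm's formula is considerably less clean than the even identity, and turning it into a sharp constraint on $\Delta_{K_D}$ requires care. Second, and more seriously, a knot is not determined by its Alexander polynomial, so I must upgrade the statement ``$K_D$ has the determinant and Alexander polynomial of $6_1$ or $9_1$'' to an honest identification of the knot type. This needs an extra input: most plausibly showing that the symmetric-union structure forces $\Sigma_2(K_D)$ to be a lens space, whence $K_D$ is $2$-bridge and therefore $6_1$ or $9_1$ by its determinant; failing that, a direct bridge-number or genus argument excluding every other knot of determinant $9$ that is compatible with Lamm's constraint.
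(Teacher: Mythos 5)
Your opening determinant step is sound ($\det K = (\det K_D)^2$ holds for every symmetric union, so $\det K_D = 9$), but both of your main branches contain genuine gaps. In the even case, the breadth argument is simply false: $\Delta_{11a_{201}}(t) = (2-5t+2t^2)^2 = \Delta_{6_1}(t)^2$ up to units, so the breadth is $4$, divisible by $4$, and the Alexander polynomial \emph{is} a perfect square of a symmetric polynomial --- the paper states this explicitly as the reason abelian invariants cannot exclude an even presentation with partial knot $6_1$. Your backup fails for the same reason: $6_1$ has genus $1$, determinant $9$, and $\Delta_{11a_{201}}(t) \doteq \Delta_{6_1}(t)^2$, so it survives every test you propose; moreover "finitely many candidates" is wrong, since infinitely many genus-one knots (e.g.\ suitable doubled knots) have determinant $9$. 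What the paper actually does is use Corollary~\ref{cor:genus_ineq} only to dispose of $9_1$ (from $2 = g(K) \geq 2g(K_D)$ and $g(9_1)=4$), and then kill $6_1$ with twisted Alexander polynomials: it takes a representation $\rho_0\colon G(6_1)\to \SL(2,\F_7)$ with $\Delta_{6_1,\rho_0}(t)=1$, notes that Theorem~\ref{thm:TAP} would force $\Delta_{K,\rho_0\circ\varphi_D}(t) \doteq 1+3t+t^2$, and verifies by a finite computation that none of the $33$ representations $G(K)\to \SL(2,\F_7)$ realizes this value. Some nonabelian input of this kind is unavoidable; no amount of care with $\Delta$ and $\det$ can close the even case.

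The second, larger gap is the one you flag yourself at the end: cutting the partial knot down to $\{6_1, 9_1\}$ cannot be done with $\Delta$ and $\det$, since the Alexander polynomial does not determine knot type and infinitely many knots share these invariants. The "extra input" you hope for --- that $\Sigma_2(K_D)$ is a lens space, hence $K_D$ is $2$-bridge and then determined by $\det K_D = 9$ --- is precisely the paper's Proposition~\ref{prop:11a_201partial}, and it is the bulk of the proof. It applies to skew as well as even unions because Lamm's epimorphism at the level of $\pi$-orbifold groups (Proposition~\ref{prop:orbifold}) requires no parity assumption, inducing $\tilde{\varphi}_D\colon \pi_1(\Sigma_2(K)) \twoheadrightarrow \pi_1(\Sigma_2(K_D))$. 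The paper then shows $\Sigma_2(K_D)$ is small (Culler--Shalen trees, plus the nonexistence of essential vertical or horizontal surfaces in $V(0;\frac{9}{5};\frac{1}{3},\frac{2}{3},\frac{4}{5})$), deduces that it is a lens space or Seifert fibered with three exceptional fibers and at most one even multiplicity (orbifold theorem, Casson--Jungreis/Gabai, rank bounds), and eliminates the three-fiber case by a triangle-group analysis: the induced epimorphism $T(3,3,5)\twoheadrightarrow T(\alpha_1,\alpha_2,\alpha_3)$ forces, via Knapp's classification of two-generator Fuchsian groups, Rong's Euler-characteristic lemma, and hopficity of triangle groups, an isomorphism onto $T(3,3,5)$, contradicting $|H_1(\Sigma_2(K_D);\Z)| = 9 \neq 81 = |H_1(\Sigma_2(K);\Z)|$. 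Without this $3$-manifold argument (or a genuine replacement --- the "direct bridge-number or genus argument" you mention as a fallback does not exist in any evident form), neither branch of your proposal identifies the candidate partial knots, so the proof as written does not go through.
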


It is worth mentioning that the (original) Alexander polynomial does not rule out the possibility of $11a_{201}$ being an even symmetric union of $6_1$ since $\Delta_{11a_{201}}(t)=(2-5t+2t^2)^2=\Delta_{6_1}(t)^2$.

In a forthcoming paper~\cite{BKN2}, we study the relationship between the knot type of a symmetric union and the knot type of the associated partial knot.
In particular, we study whether only finitely many distinct knots can occur as partial knots for a symmetric union presentation of a given knot.

\subsection*{Acknowledgments}
The authors would like to thank Masaaki Suzuki for his help with the computation in Corollary~\ref{cor:11a_201}.
They also thank Jae Choon Cha, Christoph Lamm, and Hidetoshi Masai for valuable comments and discussion.
The authors are grateful to the anonymous referee for helpful comments.
This study was supported in part by JSPS KAKENHI Grant Numbers JP19K03505, JP20K14317, and 23K12974.
The first and second authors were supported by Soka University International Collaborative Research Grant.

\section{Twisted Alexander polynomials}
\label{subsec:TAP}
Here we give a quick review of twisted Alexander polynomials. 
Let $K$ be an oriented knot in $S^3$.
By taking a regular diagram $D$ with $N$ crossings, a Wirtinger presentation of $G(K)$ is given as 
\[
G(K)=\ang{x_1,\ldots,x_N \mid r_1,\ldots,r_N},
\]
where each generator corresponds to an arc in the diagram $D$ and each relator comes from a crossing in $D$.
Here it is known that any one relator, for example $r_N$, can be removed and hence:
\[
G(K)=\ang{x_1,\ldots,x_N \mid r_1,\ldots,r_{N-1}}.
\]
The abelianization homomorphism
\[
\alpha\colon G(K)\to H_1(E(K);\Z)\cong\Z = \ang{t}
\]
is given by assigning to each generator $x_i$ the meridian element $t \in H_1(E(K);\Z)$.

Here we consider a representation 
\[
\rho\colon G(K)\to \GL(d,\F),
\]
where $\F$ is a field. 
The maps $\rho$ and $\alpha$ naturally induce two ring homomorphisms 
\[
\tilde{\rho}\colon {\Z}[G(K)] \to M(d,{\F})
\]
and
\[
\tilde{\alpha}\colon \Z[G(K)]\to \Z[\ang{t}]=\Z[t,t^{-1}], 
\]
where ${\Z}[\Gamma]$ denotes the integral group ring of a group $\Gamma$ and $M(d,{\F})$ is the algebra of $m\times m$ matrices over ${\F}$. 

Then $\tilde{\rho}\otimes\tilde{\alpha}$ defines a ring homomorphism ${\Z}[G(K)]\to M(d,{\F}[t,t^{-1}])$.
Let $F_N$ denote the free group on generators $x_1,\ldots,x_N$ and let
\[
\Phi\colon {\Z}[F_N]\to M(d,{\F}[t,t^{-1}])
\]
be the composition of the surjection $\tilde{\phi}\colon {\Z}[F_N]\to{\Z}[G(K)]$ induced by the presentation of $G(K)$ and the map $\tilde{\rho}\otimes\tilde{\alpha}\colon {\Z}[G(K)]\to M(d,{\F}[t,t^{-1}])$.

Let us consider the $(N-1)\times N$ matrix $A$ whose $(i,j)$-entry is the $d\times d$ matrix
\[
\Phi\left(\frac{\partial r_i}{\partial x_j}\right) \in M\left(d,{\F}[t,t^{-1}]\right),
\]
where $\frac{\partial}{\partial x_j}\colon \Z [F_N]\to\Z [F_N]$ ($j=1,\dots,N$) denotes Fox's free differential.
We call $A$ the \emph{twisted Alexander matrix} of $G(K)$ associated to $\rho$. 
For $1\leq j\leq N$, let us denote by $A_{x_j}$ the $(N-1)\times(N-1)$ matrix obtained from $A$ by removing the $j$th column
$\left(
\Phi(\frac{\partial r_1}{\partial x_j}) \dots \Phi(\frac{\partial r_{N-1}}{\partial x_j})
\right)^T$
corresponding to $x_j$.
We regard $A_{x_j}$ as a $d(N-1)\times d(N-1)$ matrix with coefficients in ${\F}[t,t^{-1}]$.

Then Wada's \emph{twisted Alexander polynomial}~\cite{Wad94} of a knot $K$ associated to a representation $\rho\colon G(K)\to \GL(d,\F)$ is defined to be the rational expression
\[
\Delta_{K,\rho}(t) = \frac{\det A_{x_j}}{\det\Phi(x_j-1)}.
\]
This is well-defined up to multiplication by a unit in $\F[t,t^{-1}]$. 
In particular, it does not depend on the choice of a presentation of $G(K)$ under this equivalence.

\begin{remark}
\noindent
The denominator $\det\Phi(x_j-1)$ can be expressed as $\det(\rho(x_j)\alpha(x_j)-I_d)=\det(\rho(\mu)t-I_d)$.
\end{remark}

\section{Proof of Theorem~\ref{thm:TAP}}
\label{sec:proof_of_TAP}

This section is devoted to proving our main theorem.
A key of the proof is to choose a suitable Wirtinger presentation of a given even symmetric union.
In this section, we use $\doteq$ for the equality up to multiplication by a unit in $\F[t,t^{-1}]$.

\begin{figure}[h]
 \centering
 \includegraphics[width=0.95\textwidth]{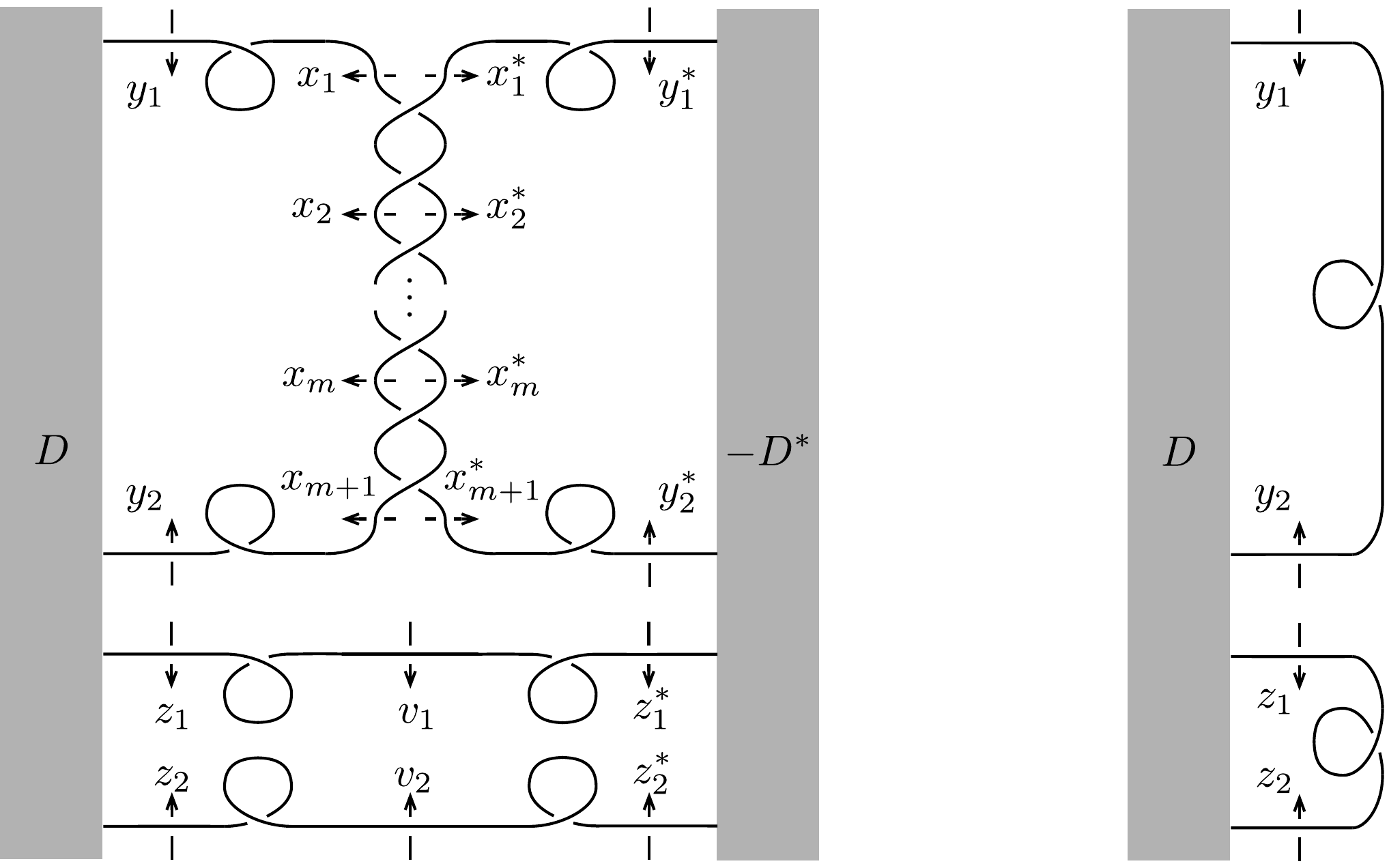}
 \caption{Generators for the knot groups of a symmetric union and a partial knot when $k=1$.}
 \label{fig:wirtinger_pres_1}
\end{figure}

\begin{proof}[Proof of Theorem~\ref{thm:TAP}]
We first show the case $K=D\cup (-D^\ast)(\infty,2m)$, where $m$ is non-negative.
Using the Reidemeister moves, we may assume $K$ is drawn as Figure~\ref{fig:wirtinger_pres_1} and there are $c\ (\geq 3)$ crossings and $c-2$ arcs in $D$.
Then we have a Wirtinger presentation such that generators are
\[
\left\{
\begin{aligned}
& u_1, \dots, u_{c-2}, u_1^\ast, \dots, u_{c-2}^\ast, \\
& v_1, v_2, x_1, x_1^\ast, \dots, x_{m+1}, x_{m+1}^\ast, \\
& y_1, y_1^\ast, y_2, y_2^\ast, z_1, z_1^\ast, z_2, z_2^\ast
\end{aligned}
\right.
\]
and relators are
\[
\left\{
\begin{aligned}
& r_1, \dots, r_c, r_1^\ast, \dots, r_c^\ast, \\
& x_1x_1^\ast x_2^{-1}(x_1^\ast)^{-1}, x_1^\ast x_2^{-1}(x_2^\ast)^{-1}x_2, \dots, x_m x_m^\ast x_{m+1}^{-1}(x_m^\ast)^{-1}, x_m^\ast x_{m+1}^{-1}(x_{m+1}^\ast)^{-1}x_{m+1}, \\
& x_1y_1^{-1}, x_1^\ast(y_1^\ast)^{-1}, x_{m+1}y_2^{-1}, x_{m+1}^\ast(y_2^\ast)^{-1}, v_1z_1^{-1}, v_1(z_1^\ast)^{-1}, v_2z_2^{-1}, v_2(z_2^\ast)^{-1}.
\end{aligned}
\right.
\]
Let us drop the last relator.
Then the corresponding Alexander matrix $A$ is
\begin{gather*}
\left(
\begin{array}{cc|cc|ccccccccc}
U & & & & & & & & & & & & \\
 & U & & & & & & & & & & & \\\hline
 & & & & I & X-I & -X & & & & & & \\
 & & & & & I & X^{-1}-I & -X^{-1} & & & & & \\
 & & & & & & \ddots & & & & \ddots & & \\
 & & & & & & & & & I & X-I & -X & \\
 & & & & & & & & & & I & X^{-1}-I & -X^{-1} \\\hline
 & & & & I & & & & & & & & \\
 & & & & & I & & & & & & & \\
 & & & & & & & & & & & I & \\
 & & & & & & & & & & & & I \\\hline
 & & I & & & & & & & & & & \\
 & & I & & & & & & & & & & \\
 & & & I & & & & & & & & & 
\end{array}
\right.
\hspace{3em}
\\
\hspace{0.5\textwidth}
\left.
\begin{array}{cccc|cccc}
 Y_1 & & Y_2 & & Z_1 & & Z_2 \\
 & Y_1 & & Y_2 & & Z_1 & & Z_2 \\\hline
 & & & & & & & \\
 & & & & & & & \\
 & & & & & & & \\
 & & & & & & & \\
 & & & & & & & \\\hline
 -I & & & & & & & \\
 & -I & & & & & & \\
 & & -I & & & & & \\
 & & & -I & & & & \\\hline
 & & & & -I & & & \\
 & & & & & -I & & \\
 & & & & & &-I & 
\end{array}
\right),
\end{gather*}
where $U=\Phi\left(\frac{\partial r_i}{\partial u_j}\right)_{\substack{1\leq i\leq c\\ 1\leq j\leq c-2}}$, $X=\Phi(x_1)$, $Y_l=\Phi\left(\frac{\partial r_i}{\partial y_l}\right)_{1\leq i\leq c}$, $Z_l=\Phi\left(\frac{\partial r_i}{\partial z_l}\right)_{1\leq i\leq c}$ for $l=1,2$.
Note here that, by the definition of $\varphi_D$ in \cite[Theorem~3.3]{Lam00}, we have $\varphi_D(u_i) = \varphi_D(u_i^\ast) = u_i$ for $i=1,\dots,c-2$, $\varphi_D(x_i) = \varphi_D(x_i^\ast) = \varphi_D(y_j) = \varphi_D(y_j^\ast) = y_j$ for $i=1, \dots, m+1$ and $j=1,2$.
Therefore, $U=\Phi\left(\frac{\partial r_i^\ast}{\partial u_j^\ast}\right)_{\substack{1\leq i\leq c\\ 1\leq j\leq c-2}}$, $X=\Phi(x_i)=\Phi(x_i^\ast)=\Phi(y_j)=\Phi(y_j^\ast)$, $Y_l=\Phi\left(\frac{\partial r_i^\ast}{\partial y_l^\ast}\right)_{1\leq i\leq c}$, and $Z_l=\Phi\left(\frac{\partial r_i^\ast}{\partial z_l^\ast}\right)_{1\leq i\leq c}$.
To compute the numerator of the twisted Alexander polynomial, we remove the last column corresponding to $z_2^\ast$ and compute the determinant of the resulting square matrix $A_{z_2^\ast}$.
Using elementary column operations, we eliminate the seven $I$'s above in the last seven rows (at the level of blocks). 
We get an upper triangular block matrix with a lower $7 \times 7$ diagonal block of $-I$ entries.
Then the determinant is equal to that of the matrix
\[\hspace{-2em}
\left(
\begin{array}{cc|cc|ccccccccc}
U & & Z_1 & Z_2 & Y_1 & & & & & & & Y_2 & \\
 & U & Z_1 & & & Y_1 & & & & & & & Y_2 \\\hline
 & & & & I & X-I & -X & & & & & & \\
 & & & & & I & X^{-1}-I & -X^{-1} & & & & & \\
 & & & & & & \ddots & & & & \ddots & & \\
 & & & & & & & & & I & X-I & -X & \\
 & & & & & & & & & & I & X^{-1}-I & -X^{-1} 
\end{array}
\right)
\]
up to sign.
By adding the $j$th column to the $(j+1)$st column (at the level of blocks) for $j=5,6,\dots,2m+5$ in this order, the rightmost part is transformed into
\[
\left(
\begin{array}{ccccccccc}
Y_1 & Y_1 & Y_1 & & \cdots & & Y_1 & Y_1+Y_2 & Y_1+Y_2 \\
 & Y_1 & Y_1 & & \cdots & & Y_1 & Y_1 & Y_1+Y_2 \\\hline
I & X & O & & & & & & \\
 & I & X^{-1} & O & & & & & \\
 & & \ddots & & & & \ddots & & \\
 & & & & & I & X & O & \\
 & & & & & & I & X^{-1} & O 
\end{array}
\right).
\]
For the resulting entire matrix, we add the second row multiplied by $-1$ to the first row and then add the first column to the second column.
The resulting matrix is
\[
\left(
\begin{array}{cc|cc|ccccccccc}
U & & & Z_2 & Y_1 & O & O & & \cdots & & O & Y_2 & \\
 & U & Z_1 & & & Y_1 & Y_1 & & \cdots & & Y_1 & Y_1 & Y_1+Y_2 \\\hline
 & & & & I & X & O & & & & & & \\
 & & & & & I & X^{-1} & O & & & & \\
 & & & & & & \ddots & & & & \ddots & & \\
 & & & & & & & & & I & X & O & \\
 & & & & & & & & & & I & X^{-1} & O 
\end{array}
\right).
\]
Since $(U\ Z_1\ Y_1+Y_2)$ is a $c\times c$ matrix and the rows of the three columns containing $U$, $Z_1$, $Y_1+Y_2$ are zero except the second row, after successively permuting the last column with each of the previous $2m+2$ columns, the determinant of the above matrix is the product of $\det(U\ Z_1\ Y_1+Y_2)$ and that of
\[
\left(
\begin{array}{c|c|cccccccc}
U & Z_2 & Y_1 & O & & & \cdots & & O & Y_2 \\\hline
 & & I & X & O & & & & & \\
 & & & I & X^{-1} & O & & & \\
 & & & & \ddots & & & & \ddots & \\
 & & & & & & & I & X & O \\
 & & & & & & & & I & X^{-1} 
\end{array}
\right)
\]
up to sign.
Adding the $(2j-1)$st column multiplied by $-X$ to the $2j$th column and adding the $2j$th column multiplied by $-X^{-1}$ to the $(2j+1)$st column for $j=2,3,\dots,m+1$, we transform the above matrix into
\[
\left(
\begin{array}{c|c|cccccccc}
U & Z_2 & Y_1 & -Y_1X & Y_1 & & \cdots & & -Y_1X & Y_1+Y_2 \\\hline
 & & I & O & O & & & & & \\
 & & & I & O & O & & & \\
 & & & & \ddots & & & & \ddots & \\
 & & & & & & & I & O & O \\
 & & & & & & & & I & O 
\end{array}
\right)
\]
whose determinant is $\det(U\ Z_2\ Y_1+Y_2)\det(I^{2m})$ up to sign, after successively permuting the last column with each of the previous $2m$ columns.
Then,
\begin{align}
\det A_{z_2^\ast} \doteq \det(U\ Z_1\ Y_1+Y_2)\det(U\ Z_2\ Y_1+Y_2).
\label{eq:product}
\end{align}

Now, in the case $k=1$ and $m\geq 0$, the desired equality
\[
\Delta_{K,\rho}(t)\det(\rho(\mu)t-I_d) \doteq \left(\Delta_{K_D,\rho_D}(t)\det(\rho_D(\mu_D)t-I_d)\right)^2
\]
follows from \eqref{eq:product} and Claim~\ref{claim:squaredet} whose proof is written after the current proof.

\begin{claim}
\label{claim:squaredet}
For the above matrices $U$, $Y_1$, $Y_2$, $Z_1$, and $Z_2$,
\[
\det(U\ Z_1\ Y_1+Y_2) \doteq \det(U\ Z_2\ Y_1+Y_2) \doteq \Delta_{K_D,\rho_D}(t)\det(\rho_D(\mu_D)t-I_d).
\]
\end{claim}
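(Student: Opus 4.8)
The plan is to exhibit $(U\ Z_1\ Y_1+Y_2)$ as a twisted Alexander matrix of the partial knot itself, by presenting $K_D$ as a long knot. I would close the diagram $D$ only on the twist side, merging the two top boundary arcs into a single generator $y$ (so that $y=y_1=y_2$), while keeping the two bottom boundary arcs $z_1,z_2$ as the two endpoints of the resulting $1$-string tangle. Reading the Wirtinger relations off $D$ then gives
\[
G(K_D)=\ang{u_1,\dots,u_{c-2},\,y,\,z_1,\,z_2 \mid r_1,\dots,r_c},
\]
a deficiency-one presentation with $c+1$ generators and $c$ relators. Closing the two ends $z_1\leftrightarrow z_2$ recovers the knot $K_D$, and this long-knot Wirtinger presentation presents its group $G(K_D)$.

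Next I would compute the associated twisted Alexander matrix with the representation $\rho_D$. By the discussion preceding the claim, the blocks $U,Z_1,Z_2$ are exactly the Fox--Jacobian columns for $u_1,\dots,u_{c-2},z_1,z_2$, and the column for $y$ is $Y_1+Y_2$: substituting $y_1=y_2=y$ into each relator $r_i$ and applying the Fox chain rule yields $\Phi(\partial r_i/\partial y)=\Phi(\partial r_i/\partial y_1)+\Phi(\partial r_i/\partial y_2)$. Thus the twisted Alexander matrix of the above presentation is the $c\times(c+1)$ block matrix $(U\ \ Y_1+Y_2\ \ Z_1\ \ Z_2)$.

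The claim then follows directly from Wada's definition applied to this single matrix. Deleting the column $z_2$ computes
\[
\Delta_{K_D,\rho_D}(t)\doteq \frac{\det(U\ \ Y_1+Y_2\ \ Z_1)}{\det\Phi(z_2-1)}=\frac{\det(U\ Z_1\ Y_1+Y_2)}{\det(\rho_D(\mu_D)t-I_d)},
\]
while deleting the column $z_1$ gives the same polynomial with $Z_1$ replaced by $Z_2$; since $z_1,z_2$ are both meridians we have $\det\Phi(z_1-1)=\det\Phi(z_2-1)=\det(\rho_D(\mu_D)t-I_d)$, so both equalities of the claim drop out simultaneously. As a cross-check, the closed-up matrix $(U\ \ Y_1+Y_2\ \ Z_1+Z_2)$ is the full---hence singular---twisted Alexander matrix of the closed knot $K_D$, and linearity in the last column gives the first equality independently.

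The step I expect to require the most care is the identification in the second paragraph: verifying that merging $y_1,y_2$ while cutting at $z$ really yields a valid Wirtinger presentation of $G(K_D)$ of deficiency one, and that the $y$-column is precisely $Y_1+Y_2$. This amounts to matching the generators of the symmetric-union diagram---where $y_1,y_2,z_1,z_2$ are all distinct---with those of $K_D$ through $\varphi_D$, so that the blocks $U,Y_1,Y_2,Z_1,Z_2$ already computed for $K$ serve verbatim as the Fox--Jacobian of the partial knot $K_D$.
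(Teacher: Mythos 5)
Your main argument is correct and is essentially the paper's own proof in different packaging: the paper realizes your merge $y_1=y_2$ by keeping both generators together with the identification relator $y_1y_2^{-1}$ (plus $z_1z_2^{-1}$, which it drops), and its column operation adding the $Y_2$-block to the $Y_1$-block produces exactly your Fox chain-rule column $Y_1+Y_2$; deleting the $z_2$- (resp.\ $z_1$-) column and invoking the presentation independence of Wada's invariant, with denominator $\det\Phi(z_l-1)=\det(\rho_D(\mu_D)t-I_d)$ since the $z_l$ are meridians, is common to both arguments, and your tangle-group justification of the deficiency-one long-knot presentation is a valid substitute for the paper's ``drop one Wirtinger relator.'' One caveat: your final ``cross-check'' is invalid for $d>1$. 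The closed-up matrix $(U\ \ Y_1+Y_2\ \ Z_1+Z_2)$ is indeed singular, by the fundamental Fox identity $\sum_j \Phi\bigl(\partial r_i/\partial x_j\bigr)\Phi(x_j-1)=0$ with each $\Phi(x_j-1)$ invertible over the quotient field $\F(t)$, but $Z_1+Z_2$ is a block of $d$ scalar columns, and the determinant is multilinear only in scalar columns; expanding $\det(U\ \ Y_1+Y_2\ \ Z_1+Z_2)$ produces $2^d$ mixed terms, so it does not equal $\det(U\ \ Y_1+Y_2\ \ Z_1)+\det(U\ \ Y_1+Y_2\ \ Z_2)$ in general, and the ``first equality drops out independently'' only when $d=1$. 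Strike that aside; the rest of your proof stands as written.
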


If $m$ is negative, we use a Wirtinger presentation such that generators are
\[
\left\{
\begin{aligned}
& u_1, \dots, u_{c-2}, u_1^\ast, \dots, u_{c-2}^\ast, \\
& v_1, v_2, x_1^\ast, x_1, \dots, x_{m+1}^\ast, x_{m+1}, \\
& y_1, y_1^\ast, y_2, y_2^\ast, z_1, z_1^\ast, z_2, z_2^\ast
\end{aligned}
\right.
\]
and relators are
\[
\left\{
\begin{aligned}
& r_1, \dots, r_c, r_1^\ast, \dots, r_c^\ast, \\
& x_1^\ast x_1(x_2^\ast)^{-1}x_1^{-1}, x_1(x_2^\ast)^{-1}x_2^{-1}x_2^\ast, \dots, x_m^\ast x_m (x_{m+1}^\ast)^{-1}x_m^{-1}, x_m(x_{m+1}^\ast)^{-1}x_{m+1}^{-1}x_{m+1}^\ast, \\
& x_1y_1^{-1}, x_1^\ast(y_1^\ast)^{-1}, x_{m+1}y_2^{-1}, x_{m+1}^\ast(y_2^\ast)^{-1}, v_1z_1^{-1}, v_1(z_1^\ast)^{-1}, v_2z_2^{-1}, v_2(z_2^\ast)^{-1}.
\end{aligned}
\right.
\]
Note that the order of generators $x_i$ and $x_i^\ast$ is changed.
Then, the left half of the matrix $A$ above is changed to
\[
\left(
\begin{array}{cc|cc|ccccccccc}
U & & & & & & & & & & & & \\
 & U & & & & & & & & & & & \\\hline
 & & & & I & X-I & -X & & & & & & \\
 & & & & & I & X^{-1}-I & -X^{-1} & & & & & \\
 & & & & & & \ddots & & & & \ddots & & \\
 & & & & & & & & & I & X-I & -X & \\
 & & & & & & & & & & I & X^{-1}-I & -X^{-1} \\\hline
 & & & & & I & & & & & & & \\
 & & & & I & & & & & & & & \\
 & & & & & & & & & & & & I \\
 & & & & & & & & & & & I & \\\hline
 & & I & & & & & & & & & & \\
 & & I & & & & & & & & & & \\
 & & & I & & & & & & & & & 
\end{array}
\right.
\]
and the same elementary row and column operations as in the case $m\geq 0$ give the desired equality.

\begin{figure}[h]
 \centering
 \includegraphics[width=0.95\textwidth]{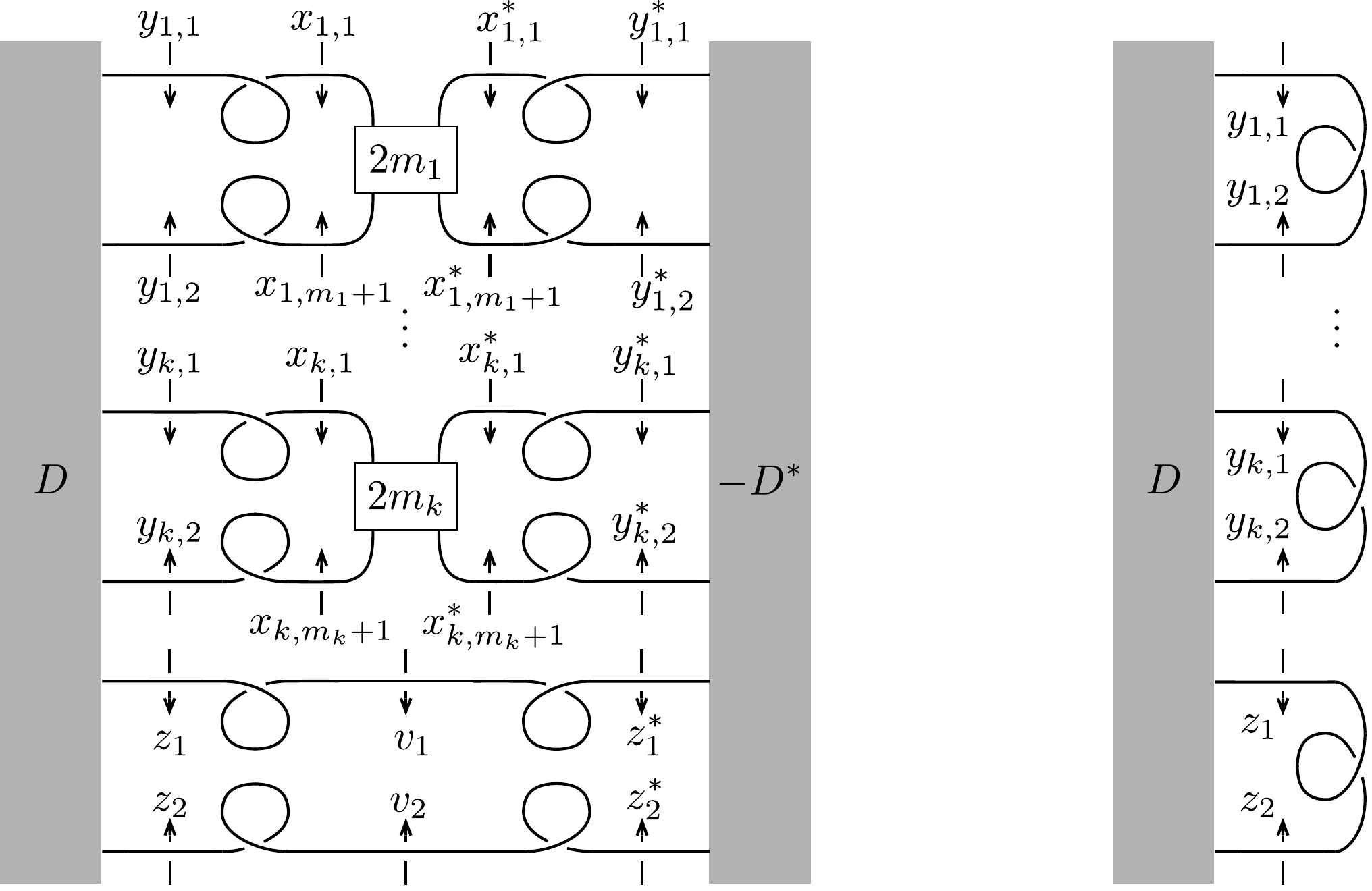}
 \caption{Generators for the knot groups of a symmetric union and a partial knot.}
 \label{fig:wirtinger_pres}
\end{figure}

Next, we consider the case of $K=D\cup (-D^\ast)(\infty,2m_1,\dots,2m_k)$.
For simplicity, we assume $m_l$ are non-negative, but the proof in the negative case is almost the same as mentioned before.
We use a diagram of $K$ as illustrated in Figure~\ref{fig:wirtinger_pres} and a Wirtinger presentation such that generators are
\[
\left\{
\begin{aligned}
& u_1, \dots, u_{c-k-1}, u_1^\ast, \dots, u_{c-k-1}^\ast, \\
& v_1, v_2, x_{1,1}, x_{1,1}^\ast, \dots, x_{1,m_1+1}, x_{1,m_1+1}^\ast, \dots, x_{k,1}, x_{k,1}^\ast, \dots, x_{k,m_k+1}, x_{k,m_k+1}^\ast \\
& y_{1,1}, y_{1,1}^\ast, y_{1,2}, y_{1,2}^\ast, \dots, y_{k,1}, y_{k,1}^\ast, y_{k,2}, y_{k,2}^\ast, z_1, z_1^\ast, z_2, z_2^\ast
\end{aligned}
\right.
\]
and relators are
\[
\hspace{-5em}
\left\{
\begin{aligned}
& r_1, \dots, r_c, r_1^\ast, \dots, r_c^\ast, \\
& x_{1,1}x_{1,1}^\ast x_{1,2}^{-1}(x_{1,1}^\ast)^{-1}, x_{1,1}^\ast x_{1,2}^{-1}(x_{1,2}^\ast)^{-1}x_{1,2}, \dots, x_{1,m_1}x_{1,m_1}^\ast x_{1,m_1+1}^{-1}(x_{1,m_1}^\ast)^{-1}, x_{1,m_1}^\ast x_{1,m_1+1}^{-1}(x_{1,m_1+1}^\ast)^{-1}x_{1,m_1+1}, \\
& \quad\vdots \\
& x_{k,1}x_{k,1}^\ast x_{k,2}^{-1}(x_{k,1}^\ast)^{-1}, x_{k,1}^\ast x_{k,2}^{-1}(x_{k,2}^\ast)^{-1}x_{k,2}, \dots, x_{k,m_k}x_{k,m_k}^\ast x_{k,m_k+1}^{-1}(x_{k,m_k}^\ast)^{-1}, x_{k,m_k}^\ast x_{k,m_k+1}^{-1}(x_{k,m_k+1}^\ast)^{-1}x_{k,m_k+1}, \\
& x_{1,1}y_{1,1}^{-1}, x_{1,1}^\ast(y_{1,1}^\ast)^{-1}, x_{1,m_1+1}y_{1,2}^{-1}, x_{1,m_1+1}^\ast(y_{1,2}^\ast)^{-1},\dots, x_{k,1}y_{k,1}^{-1}, x_{k,1}^\ast(y_{k,1}^\ast)^{-1}, x_{k,m_k+1}y_{k,2}^{-1}, x_{k,m_k+1}^\ast(y_{k,2}^\ast)^{-1}, \\
& v_1z_1^{-1}, v_1(z_1^\ast)^{-1}, v_2z_2^{-1}, v_2(z_2^\ast)^{-1}
\end{aligned}
\right.
\]
Let us drop the same relator $v_2(z_2^\ast)^{-1}$ as in the case $k=1$.
Then the corresponding Alexander matrix is divided into blocks similar to the matrix $A$ above.
After removing the column corresponding to $z_2^\ast$, we apply elementary column operations and obtain the matrix
\[
W=
\left(
\begin{array}{cc|cc|c|c|c}
U & & Z_1 & Z_2 & W_1 & \cdots & W_k \\
 & U & Z_1 & & W_1' & \cdots & W_k' \\\hline
 & & & & W_1'' & & \\\hline
 & & & & & \ddots & \\\hline
 & & & & & & W_k'' 
\end{array}
\right),
\]
where
\begin{gather*}
W_l=
\begin{pmatrix}
 Y_{l,1} & O & \cdots & O & Y_{l,2} & O
\end{pmatrix},
\quad
W_l'=
\begin{pmatrix}
O & Y_{l,1} & O & \cdots & O & Y_{l,2} 
\end{pmatrix},
\\
W_l''=
\begin{pmatrix}
 I & X_l-I & -X_l & & & & & & \\
 & I & X_l^{-1}-I & -X_l^{-1} & & & & & \\
 & & \ddots & & & & \ddots & & \\
 & & & & & I & X_l-I & -X_l & \\
 & & & & & & I & X_l^{-1}-I & -X_l^{-1} 
\end{pmatrix},
\end{gather*}
and $X_l=\Phi(x_{l,i})=\Phi(x_{l,i}^\ast)=\Phi(y_{l,j})=\Phi(y_{l,j}^\ast)$ for $l=1,\dots,k$, $i=1,\dots,m_{l}+1$ and $j=1,2$.
Now, the precess of elementary row and column operations in the case $k=1$ can be applied to the matrix $W$ and we deduce
\[
\det W \doteq \det(U\ Z_1\ Y_{1,1}+Y_{1,2}\ \cdots\ Y_{k,1}+Y_{k,2})\det(U\ Z_2\ Y_{1,1}+Y_{1,2}\ \cdots\ Y_{k,1}+Y_{k,2}).
\]
A computation analogous to the one given in the proof of Claim~\ref{claim:squaredet} shows that 
\begin{align*}
\Delta_{K_D,\rho_D}(t)\det(\rho_D(\mu_D)t-I_d)
&\doteq \det(U\ Z_1\ Y_{1,1}+Y_{1,2}\ \cdots\ Y_{k,1}+Y_{k,2}) \\
&\doteq \det(U\ Z_2\ Y_{1,1}+Y_{1,2}\ \cdots\ Y_{k,1}+Y_{k,2}).
\end{align*}
This completes the proof.
\end{proof}

\begin{proof}[Proof of Claim~\ref{claim:squaredet}]
We focus on a Wirtinger presentation of $K_D$ with generators 
$u_1, \dots, u_{c-2}$, $y_1, y_2, z_1, z_2$
and relators 
$r_1, \dots, r_c$, $y_1y_2^{-1}, z_1z_2^{-1}$.
After dropping the last relation, the corresponding Alexander matrix is
\[
\left(
\begin{array}{c|ccccc}
U & Y_1 & Y_2 & Z_1 & Z_2 \\\hline
 & I & -I & & 
\end{array}
\right).
\]
By removing the last column corresponding to $z_2$ and adding the third column to the second one (at the level of blocks), we obtain
\[
\left(
\begin{array}{c|cccc}
U & Y_1+Y_2 & Y_2 & Z_1 \\\hline
 & & -I & 
\end{array}
\right).
\]
After permuting the last two columns, its determinant is $\det(U\ Y_1+Y_2\ Z_1)$ up to sign.

In the same way, by first removing the fourth column corresponding to $z_1$, the determinant is also equal to $\det(U\ Y_1+Y_2\ Z_2)$ up to sign.
\end{proof}

\begin{proof}[Proof of Corollary~\ref{cor:genus_ineq}]
By \cite[Theorem~1.2]{FrVi15} or \cite[Theorem~1.3]{FrNa15}, there exists a representation $\rho\colon G(K_D)\to \GL(d,\F)$ such that $\deg\Delta_{K,\rho}(t)=d(2g(K_D)-1)$.
Then, we have:
\[
d(2g(K)-1) \geq \deg\Delta_{K,\rho\circ\varphi_D}(t) = 2\deg\Delta_{K_D,\rho}(t)+d = 2d(2g(K_D)-1)+d.
\]
It follows that $2g(K)-1\geq 4g(K_D)-2+1$, and thus $g(K)\geq 2g(K_D)$.
\end{proof}

\begin{remark}
The gap $g(K)-2g(K_D)$ can be arbitrarily large.
In fact, the knot $\KT_{r,n}$ in \cite[Figure~5.10]{Gab86} admits an even symmetric union presentation whose partial knot is the unknot $U$ (see \cite[Figure~15(right)]{Lam00}), and we have $g(\KT_{r,n})-2g(U)=r$ by \cite[Theorem~5.7]{Gab86}.
Moreover, the family of knots $\KT_{r,n}$ implies that there is no upper bound for $g(K)$ in terms of $g(K_D)$ and the integers $k, n_1,\dots,n_k$.
\end{remark}

\begin{remark}
We cannot expect a simple relationship between the twisted Alexander polynomial of a symmetric union which is not necessarily even and that of the associated partial knot (see Section~\ref{sec:skew}).
\end{remark}

The next result follows from Proposition~\ref{prop:alexander} together with Corollary~\ref{cor:genus_ineq}.
It puts a big restriction on the genus of a partial knot for an even symmetric union presentation of a given alternating or fibered knot.

\begin{corollary}
\label{cor:alternating}
Let $K$ be a knot admitting an even symmetric union presentation with partial knot $K_D$.
If $\deg\Delta_K(t)=2g(K)$, then $g(K)=2g(K_D)$.
In particular if $K$ is homologically fibered, then so is $K_D$.
\end{corollary}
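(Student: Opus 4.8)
The plan is to combine the two consequences of an even symmetric union presentation already established in the excerpt---the Alexander polynomial identity $\Delta_K(t)\doteq\Delta_{K_D}(t)^2$ of Proposition~\ref{prop:alexander} and the genus inequality $g(K)\ge 2g(K_D)$ of Corollary~\ref{cor:genus_ineq}---with the classical Seifert inequality $\deg\Delta_J(t)\le 2g(J)$, valid for every knot $J$. The hypothesis $\deg\Delta_K(t)=2g(K)$ says exactly that this inequality is sharp for $K$, and the goal is to propagate that sharpness down to the partial knot $K_D$ by squeezing $g(K)$ between two matching bounds.

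First I would rewrite the hypothesis using Proposition~\ref{prop:alexander}: taking degrees in $\Delta_K(t)\doteq\Delta_{K_D}(t)^2$ gives $\deg\Delta_K(t)=2\deg\Delta_{K_D}(t)$, so the assumption $\deg\Delta_K(t)=2g(K)$ reads $g(K)=\deg\Delta_{K_D}(t)$. Next I would apply the Seifert inequality to the partial knot, $\deg\Delta_{K_D}(t)\le 2g(K_D)$, which together with the previous equality yields $g(K)\le 2g(K_D)$. Since Corollary~\ref{cor:genus_ineq} supplies the reverse inequality $g(K)\ge 2g(K_D)$, the two bounds pinch together to force $g(K)=2g(K_D)$, which is the first assertion. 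As a by-product of the resulting chain of equalities, one also reads off $\deg\Delta_{K_D}(t)=g(K)=2g(K_D)$, i.e.\ the Seifert inequality is sharp for $K_D$ as well.

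For the addendum I would use the characterization that a knot $J$ is homologically fibered precisely when $\Delta_J(t)$ is monic (leading coefficient $\pm1$) and $\deg\Delta_J(t)=2g(J)$; equivalently, a minimal genus Seifert surface has Seifert matrix $V$ with $\det V=\pm1$. The degree condition for $K_D$ is the by-product just noted, so only monicity remains. Here I would read the leading coefficient off the identity $\Delta_K(t)\doteq\Delta_{K_D}(t)^2$: if $a\in\Z$ is the leading coefficient of $\Delta_{K_D}(t)$, then the leading coefficient of $\Delta_K(t)$ is $\pm a^2$. When $K$ is homologically fibered, $\Delta_K(t)$ is monic, so $\pm a^2=\pm1$, which over $\Z$ forces $a=\pm1$. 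Hence $\Delta_{K_D}(t)$ is monic of degree $2g(K_D)$, and $K_D$ is homologically fibered.

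I do not expect a serious obstacle: the core of the argument is a two-sided sandwich followed by elementary leading-coefficient bookkeeping. The only point requiring minor care is the last step, where one must keep track of the ambiguity $\doteq$ so as not to mistake it for a genuine change of leading coefficient. Since the units of $\Z[t,t^{-1}]$ are exactly $\pm t^k$, they alter a leading coefficient only by a sign, so the property of being monic is well defined under $\doteq$; it is the integrality of the coefficients that then rules out any leading coefficient other than $\pm1$.
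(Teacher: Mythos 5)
Your proof is correct and follows essentially the same route as the paper: both deduce $g(K)=\deg\Delta_{K_D}(t)$ from $\Delta_K(t)\doteq\Delta_{K_D}(t)^2$ and then sandwich via the Seifert bound $\deg\Delta_{K_D}(t)\le 2g(K_D)$ against Corollary~\ref{cor:genus_ineq}. Your explicit leading-coefficient argument for the homologically fibered addendum (units $\pm t^k$ only flip signs, so $\pm a^2=\pm1$ forces $a=\pm1$) is the natural completion of a step the paper leaves implicit, and it is sound.
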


Here, a knot $K$ is said to be \emph{homologically fibered} if $\Delta_K(t)$ is monic and $\deg\Delta_K(t)=2g(K)$ (see \cite[Definition~3.1]{GoSa13}).
Recall from \cite[Proposition~5.2]{KiSu08GTM} that if there is an epimorphism $G(K)\to G(K')$ and $K$ is fibered, then $K'$ is also fibered.

\begin{proof}[Proof of Corollary~\ref{cor:alternating}]
A special case of \cite[Theorem~2.4]{Lam00} implies $\Delta_K(t) \doteq \Delta_{K_D}(t)^2$.
Thus $g(K)=\deg\Delta_{K_D}(t)$ by the assumption.
Combining this equality with Theorem~\ref{thm:TAP}, we have
\[
2g(K_D) \geq \deg\Delta_{K_D}(t) = g(K) \geq 2g(K_D).
\]
Hence we conclude that $g(K)=2g(K_D)$.
\end{proof}

\begin{remark}
If a knot $K$ admits an even symmetric union presentation, it follows from Proposition~\ref{prop:alexander} that the degree of its Alexander polynomial is divisible by $4$.
For example the knot $6_1$ is represented as a skew-symmetric union (see \cite{Lam00}), but cannot be represented as any even symmetric union because its Alexander polynomial 
$\Delta_{6_1}(t)=2t^{-1}-5+2t=(2t^{-1}-1)(2t-1)$ has degree $2$.
Furthermore if $K$ admits an even symmetric union presentation and $\deg\Delta_K(t)=2g(K)$, it already follows from the Alexander polynomial that $g(K)$ is even.
\end{remark}

\section{Skew-symmetric unions}
\label{sec:skew}

This section shows that the inequality obtained in Corollary~\ref{cor:genus_ineq} drastically fails for a skew-symmetric union.

\begin{proposition}
\label{prop:skew}
The genus of a partial knot associated to a skew-symmetric union presentation of a knot $K$ can be arbitrarily larger than the genus of $K$.
\end{proposition}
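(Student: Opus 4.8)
The plan is to exhibit, for every integer $g\ge 1$, a skew-symmetric union whose underlying knot $K$ is the unknot but whose partial knot $K_D$ has genus $g$. Since $g(K)=0$, this gives $g(K_D)-g(K)=g\to\infty$ and proves the statement. The strategy separates into two essentially independent tasks: producing the family of diagrams realizing such presentations, and carrying out the two genus computations. It is worth noting at the outset that such examples must be skew: if $K$ were the unknot and the union were even, Corollary~\ref{cor:genus_ineq} would force $0=g(K)\ge 2g(K_D)$, hence $g(K_D)=0$. So the construction is precisely designed to break the even-case bound.

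For the lower bound on $g(K_D)$ I would draw the partial knots from a family whose genus is pinned down by the Alexander polynomial, so that no delicate Seifert-genus argument is needed on the partial side. A convenient choice is the $(2,2g+1)$-torus knots, which are $2$-bridge, alternating, and fibered with $\deg\Delta=2g$, so that $g(K_D)=g$ is immediate; connected sums of $g$ trefoils would serve equally well. The $2$-bridge choice is especially natural here because such a knot has a diagram consisting essentially of a single vertical twist region $\sigma_1^{2g+1}$, which can be positioned near the reflection axis.

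The construction itself is as follows. Place a diagram $D$ of the chosen $2$-bridge knot $K_D$ on one side of the axis, with its twisting region $\sigma_1^{2g+1}$ adjacent to the axis, and its mirror $D^\ast$ (carrying $\sigma_1^{-(2g+1)}$) on the other side; form $D\sharp(-D^\ast)$ via the $\infty$-tangle at $B_0$ and insert a single odd twist ($n_1=\pm1$, so the union is skew) in the remaining region. The odd half-twist is exactly what lets one strand pass across the connecting band so that the mirrored twist region $\sigma_1^{-(2g+1)}$ cancels $\sigma_1^{2g+1}$; I would verify by an explicit sequence of Reidemeister moves that the resulting diagram is the unknot. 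This passing move is unavailable in the even case, which is why it does not contradict Corollary~\ref{cor:genus_ineq}.

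The main obstacle is this last verification. The lower bound $g(K_D)=g$ is automatic from the chosen family, but showing that the skew diagram genuinely collapses to the unknot requires an honest isotopy, and one must fix the placement of the twist region and the parity of $n_1$ so that the cancellation goes through uniformly in $g$. Once the family and the unknotting isotopy are in hand, the conclusion is immediate. As a fallback, if a particular family only collapses to a knot $K$ of small but nonzero genus rather than to the unknot, it suffices to bound $g(K)$ from above by the genus of the surface produced by Seifert's algorithm on the collapsed diagram; as long as that bound stays fixed while $g(K_D)=g$ grows, the difference $g(K_D)-g(K)$ is still unbounded, which is all that the statement requires.
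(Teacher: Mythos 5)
Your primary plan---realizing the unknot as a skew-symmetric union whose partial knot is $T(2,2g+1)$---is not just hard to verify, it is impossible, because the double-branched-cover constraints on symmetric unions do not care about the parity of the twists. Proposition~\ref{prop:orbifold} holds for \emph{every} symmetric union, skew or even, and yields an epimorphism $\tilde{\varphi}_D\colon \pi_1(\Sigma_2(K)) \twoheadrightarrow \pi_1(\Sigma_2(K_D))$. If $K$ were the unknot, then $\Sigma_2(K)=S^3$, so $\pi_1(\Sigma_2(K_D))$ would be trivial, whence $\Sigma_2(K_D)=S^3$ and $K_D$ would itself be unknotted (by the Smith conjecture). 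Already at the homological level, the identity $\det K = (\det K_D)^2$, valid for arbitrary symmetric unions and used in this paper in the proof of Proposition~\ref{prop:11a_201partial}, forces $\det K_D = 1$, ruling out $T(2,2g+1)$ (determinant $2g+1$) and connected sums of trefoils alike. So no sequence of Reidemeister moves can collapse your diagram to the unknot: the step you flag as ``the main obstacle'' is insurmountable, and your guiding intuition is miscalibrated --- skewness breaks the genus inequality of Corollary~\ref{cor:genus_ineq}, but it does not break the $\pi$-orbifold epimorphism or the determinant condition.

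Your fallback paragraph is the correct idea, and it is in fact what the paper does, but you have not carried it out, and carrying it out is the entire content of the proof. The paper exhibits, for each odd $n>0$, an explicit skew-symmetric union (Figure~\ref{fig:SkewSymUnion}, with a single odd twist) whose partial knot is $T(2,2n+1)$, of genus $n$, and then shows by an explicit isotopy that the union is the connected sum $J(2,n-1)\sharp J(2,-n)$ of two twist knots, each of genus $1$, so that $g(K)=2$ for all $n$ while $g(K_D)=n\to\infty$. (Note the limit shape of the example: $K$ has fixed small genus and squared determinant $(2n+1)^2$, consistent with the obstructions above, rather than being trivial.) Your proposal defers exactly this identification --- no concrete family is produced for which the collapse is verified --- so as written the argument is incomplete; with a concrete family in hand, your suggested Seifert-algorithm upper bound on $g(K)$ would indeed suffice, since all the statement needs is $g(K)$ bounded while $g(K_D)$ grows.
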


\begin{proof}
Let $n$ be a positive odd integer and let $K$ be a knot defined as a skew-symmetric union presentation illustrated in Figure~\ref{fig:SkewSymUnion}, where the associated partial knot is the torus knot $T(2,2n+1)$.
On the other hand, $K$ is isotopic to the connected sum of the twist knots $J(2,-n)^\ast=J(2,n-1)$ and $J(2,-n)$ (see Figure~\ref{fig:TwistKnot} for the notation $J(2,n)$).
Here, the facts $g(T(2,2n+1))=n$ and $g(J(2,-n)^\ast \sharp J(2,-n))=2$ complete the proof.
\end{proof}

\begin{figure}[h]
 \centering
 \includegraphics[width=0.99\textwidth]{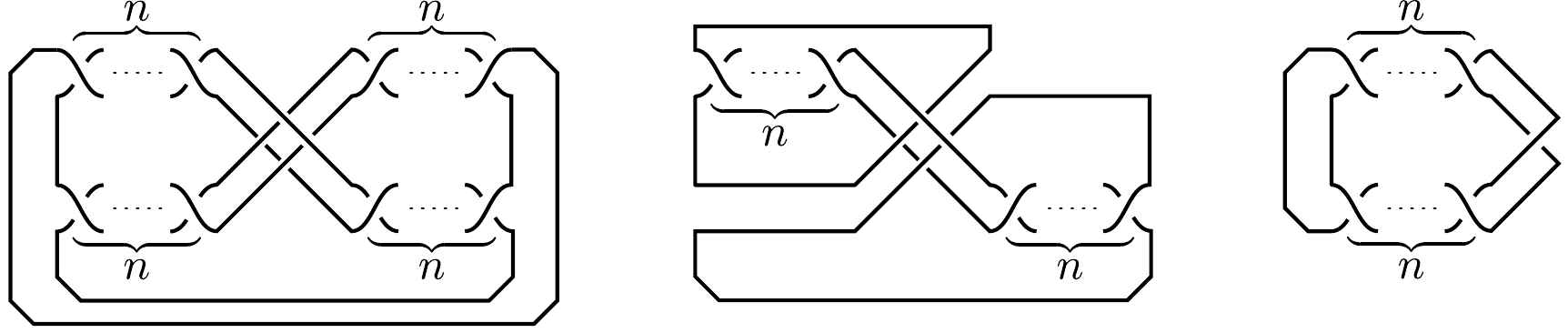}
 \caption{A skew-symmetric union presentation which is isotopic to a connected sum, and the associated partial knot.}
 \label{fig:SkewSymUnion}
\end{figure}

\begin{figure}[h]
 \centering
 \includegraphics[width=0.4\textwidth]{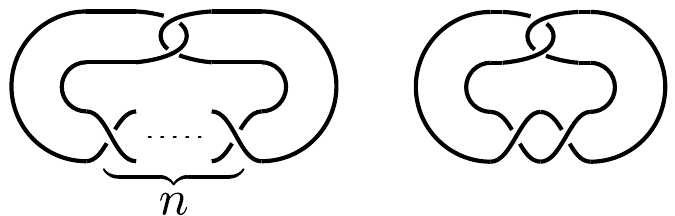}
 \caption{The twist knot $J(2,n)$ for $n\in \Z$ and $J(2,-2)=4_1$.}
 \label{fig:TwistKnot}
\end{figure}

It is worth mentioning here that skew-symmetric unions in Proposition~\ref{prop:skew} can be taken to be hyperbolic.
Indeed, for positive odd integer $n$, let us consider the skew-symmetric union $K_n$ on the left in Figure~\ref{fig:hyperbolic} whose partial knot is again $T(2,2n+1)$, and the knot admits a Seifert surface of genus $2$.
Let $L$ be the $3$-component link $L14n_{47220}^\ast$ drawn on the right in Figure~\ref{fig:hyperbolic}, which is hyperbolic according to SnapPy.
Then the knot obtained from $L$ by Dehn surgery on $U_1$ and $U_2$ with slopes $1/n$ and $-1/n$, respectively, is $K_n$.
By Thurston's Dehn surgery theorem (see \cite{BoPo01} for instance), $K_n$ are hyperbolic for $n$ large enough.


\begin{figure}[h]
 \centering
 \includegraphics[width=0.99\textwidth]{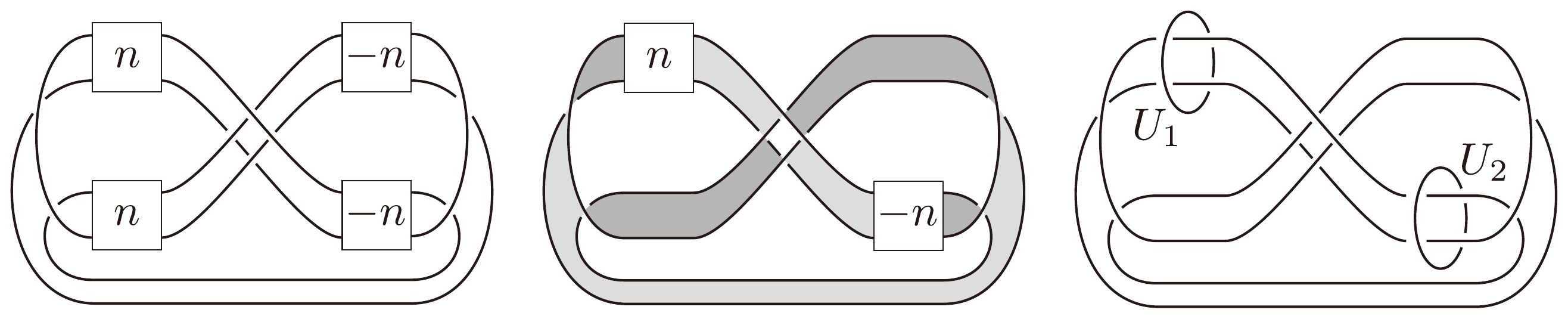}
 \caption{Knot $K_n$, its Seifert surface, and the link $L14n_{47220}^\ast$.}
 \label{fig:hyperbolic}
\end{figure}

\section{The Montesinos knot $11a_{201}$}
\label{sec:montesinos}
In this section, we prove Proposition~\ref{prop:11a_201}. 
We begin by recalling some notations about Montesinos knots and their Seifert fibered $2$-fold branched coverings. 
Let $K = K(\frac{\beta_1}{\alpha_1}, \dots, \frac{\beta_r}{\alpha_r})$ be a Montesinos knot with $r \geq 3$ rational tangles of slopes $\beta_i/\alpha_i \in \Q$ with $\alpha_i > 1$ and $\beta_i \neq 0$ coprime with $\alpha_i$. 
It has been shown by Montesinos~\cite{Mon73} (see also \cite[Chapter~12.D]{BZH14}) that the $2$-fold branched cover of $K$ is the closed orientable Seifert fibered $3$-manifold $\Sigma_2(K) = V(0; e_0; \frac{\beta_1}{\alpha_1}, \dots, \frac{\beta_r}{\alpha_r})$ with base $S^2(\alpha_1, \dots, \alpha_r)$ a $2$-dimensional orientable orbifold with underlying space $S^2$ and $r$ singular points with branching indices $\alpha_i$ corresponding to the $r$ exceptional fibers of types $(\alpha_i, \beta_i)$, where $\alpha_i \geq 2$. 
Its rational Euler number $e_0 = \sum_{i=1}^{r} \frac{\beta_i}{\alpha_i} \in \Q$ verifies that $|\Delta_{K}(-1)| = |H_1(\Sigma_2(K); \Z)| = |e_0| \prod_{i=1}^{r} \alpha_i$ (see \cite[Corollary~6.2]{JaNe83}). 
The Seifert fibered manifold $\Sigma_2(K) = V(0; e_0; \frac{\beta_1}{\alpha_1}, \dots, \frac{\beta_r}{\alpha_r})$ is determined, up to orientation-preserving homeomorphism, by the rational Euler number $e_0 \in \Q$ and the set of fractions $\{\frac{\beta_1}{\alpha_1}, \dots, \frac{\beta_r}{\alpha_r}\}$ in $\Q/\Z$ up to permutations (see \cite{Orl72}, \cite[Theorem~1.5]{JaNe83}), while the Montesinos knot $K(\frac{\beta_1}{\alpha_1}, \dots, \frac{\beta_r}{\alpha_r})$ is determined, up to orientation-preserving homeomorphism of $S^3$,
by the rational Euler number $e_0 \in \Q$ and the set of fractions $\{\frac{\beta_1}{\alpha_1}, \dots, \frac{\beta_r}{\alpha_r}\}$ in $\Q/\Z$ up to dihedral permutations (see \cite[Theorem~12.26]{BZH14}).

We first determine the only possible partial knots for a symmetric union presentation of the ribbon Montesinos knot $11a_{201} = K(\frac{1}{3}, \frac{2}{3}, \frac{4}{5})$.

\begin{proposition}\label{prop:11a_201partial}
The only possible partial knots, up to mirror image, for a symmetric union presentation of the Montesinos knot $11a_{201} = K(\frac{1}{3}, \frac{2}{3}, \frac{4}{5})$ are the $(2,9)$-torus knot or the $2$-bridge knot $6_1$.
\end{proposition}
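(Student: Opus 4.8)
The plan is to pass to the double branched cover and pin down the partial knot through the geometry of $\Sigma_2(K_D)$, after which the classification of $2$-bridge knots finishes the job. Throughout, write $K = 11a_{201}$ and let $K_D$ denote a hypothetical partial knot for a symmetric union presentation of $K$.

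\emph{Step 1 (a determinant constraint).} For any symmetric union, the determinant of $K$ is the square of the determinant of its partial knot; in the even case this is immediate from Proposition~\ref{prop:alexander}, and it is a classical property of symmetric unions in general (the determinant of a symmetric union is always a perfect square). Since $\det K = |\Delta_{K}(-1)| = |e_0|\,\alpha_1\alpha_2\alpha_3 = \tfrac{9}{5}\cdot 45 = 81$, I get $\det K_D = 9$, that is, $|H_1(\Sigma_2(K_D);\Z)| = 9$. This already cuts the candidates down to knots of determinant $9$, but it does not by itself exclude, say, $8_{20}$, $9_{46}$, or $3_1\sharp 3_1$, so a finer invariant is needed.

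\emph{Step 2 (the crux: $\Sigma_2(K_D)$ is a lens space).} Because $K = K(\tfrac13,\tfrac23,\tfrac45)$ is Montesinos, $\Sigma_2(K) = V(0;\tfrac95;\tfrac13,\tfrac23,\tfrac45)$ is Seifert fibered over the hyperbolic orbifold $S^2(3,3,5)$, with exceptional fibers of orders $3,3,5$. The reflection $\sigma$ defining the symmetric union is an orientation-reversing involution of $S^3$ preserving $K$, and it lifts to an orientation-reversing involution $\tilde\sigma$ of $\Sigma_2(K)$. Using equivariant geometrization for small Seifert fibered spaces (Meeks--Scott together with the orbifold theorem), I would argue that $\tilde\sigma$ can be isotoped to respect the Seifert fibration, hence induces an involution of the base $S^2(3,3,5)$. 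Since $\tilde\sigma$ is orientation-reversing this is a reflection whose fixed locus is a circle, and because a reflection preserves the multiset of cone orders and $3\ne 5$, it must interchange the two order-$3$ cone points and fix the order-$5$ one. Now $\Sigma_2(K_D)$ is reconstructed from the two mirror-image halves that $\sigma$ cuts out of $(S^3,K)$, together with the solid torus coming from the trivial closure of the (two-string) partial tangle; the effect of the symmetry interchanging the two order-$3$ fibers is that the partial knot only ``sees'' a Seifert fibration over $S^2$ with at most two exceptional fibers, so $\Sigma_2(K_D)$ is a lens space. This in particular rules out composite partial knots (whose double covers are reducible) and Montesinos partial knots with three exceptional fibers.

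\emph{Step 3 (classification) and the main obstacle.} Once $\Sigma_2(K_D)$ is a lens space, the theorem of Hodgson--Rubinstein (resting on the resolution of the Smith conjecture) forces $K_D$ to be a $2$-bridge knot $\mathfrak{b}(\alpha,\beta)$, whence $\alpha = \det K_D = 9$. Enumerating $\beta$ coprime to $9$ modulo $\beta\mapsto\beta^{\pm1}$ and the mirror relation $\beta\mapsto 9-\beta$ leaves exactly two knots, $\mathfrak{b}(9,1) = T(2,9)$ and $\mathfrak{b}(9,2) = 6_1$, which is the asserted list. I expect the genuine difficulty to lie entirely in Step 2: making rigorous that the lifted involution $\tilde\sigma$ is fiber-preserving, and then tracking precisely how the fixed order-$5$ exceptional fiber behaves when one reconstructs $\Sigma_2(K_D)$ from the halves --- in particular that it is absorbed into the boundary data (contributing only to the lens-space parameter) rather than surviving as a third exceptional fiber. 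One must also confirm that the partial tangle is a two-string tangle, so that the relevant piece has a single torus boundary and the capping yields a closed Seifert fibered space over $S^2$. By comparison, the determinant bookkeeping of Step 1 and the $2$-bridge enumeration of Step 3 are routine.
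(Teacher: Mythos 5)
Your Steps 1 and 3 are fine and agree with the paper (Lamm's determinant identity gives $\det K_D=\sqrt{81}=9$, and a lens-space double branched cover forces a $2$-bridge knot, leaving $\mathfrak{b}(9,1)=9_1$ and $\mathfrak{b}(9,2)=6_1$). But Step~2, which you yourself identify as the crux, rests on a false premise: a symmetric union is in general \emph{not} invariant under the defining reflection. The reflection across the axis reverses crossing signs, so it carries the diagram $(D\cup D^\ast)(\infty,n_1,\dots,n_k)$ to $(D\cup D^\ast)(\infty,-n_1,\dots,-n_k)$; the knot is setwise invariant only when the twist regions are trivial, i.e.\ in the connected-sum case, and $11a_{201}$ is prime. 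For this particular knot the failure is absolute: an orientation-reversing involution of $(S^3,K)$ would lift to an orientation-reversing self-homeomorphism of $\Sigma_2(K)=V(0;\frac{9}{5};\frac{1}{3},\frac{2}{3},\frac{4}{5})$, which would negate the rational Euler number; since a Seifert fibered space over a hyperbolic base orbifold is determined up to orientation-preserving homeomorphism by $e_0$ and the fractions mod $\Z$, and $e_0=\frac{9}{5}\neq-\frac{9}{5}$, no such self-homeomorphism exists at all. So the involution $\tilde\sigma$ you propose to geometrize does not exist, and everything downstream (the fiber-preserving isotopy, the interchange of the two order-$3$ cone points, the ``reconstruction'' of $\Sigma_2(K_D)$ from the halves) has nothing to stand on. Even as a heuristic, the conclusion is too strong: no argument that soft can force $\Sigma_2(K_D)$ to be a lens space, and indeed in the paper the lens-space conclusion is the endpoint of a delicate computation specific to this knot, not a structural consequence of the diagram symmetry.

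What the paper actually does is group-theoretic. Lamm's epimorphism of $\pi$-orbifold groups (Proposition~\ref{prop:orbifold}), which is valid for skew symmetric unions as well, induces an epimorphism $\tilde{\varphi}_D\colon \pi_1(\Sigma_2(K)) \twoheadrightarrow \pi_1(\Sigma_2(K_D))$. Pulling back a Bass--Serre action and applying Culler--Shalen, together with the fact that $V(0;\frac{9}{5};\frac{1}{3},\frac{2}{3},\frac{4}{5})$ contains no essential vertical or horizontal closed surface, shows $\Sigma_2(K_D)$ is small (Lemma~\ref{lem:small}); a center/torsion argument using Casson--Jungreis and Gabai plus the rank bound of Boileau--Zieschang shows it is a lens space or Seifert fibered with exactly three exceptional fibers and at most one even cone order (Lemma~\ref{lem:seifertfibered}). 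The three-fiber case is then excluded by analyzing the induced epimorphism of triangle groups $T(3,3,5)\twoheadrightarrow T(\alpha_1,\alpha_2,\alpha_3)$: an order-$5$ image rules out $(2,3,3)$ and $(3,3,3)$, a homology congruence rules out the icosahedral case, and Knapp's classification of two-generator Fuchsian groups combined with Rong's Euler-characteristic inequality forces $T(\alpha_1,\alpha_2,\alpha_3)=T(3,3,5)$; hopficity then makes $\tilde{\varphi}_D$ an isomorphism, contradicting $|H_1(\Sigma_2(K);\Z)|=81\neq 9=|H_1(\Sigma_2(K_D);\Z)|$. Note that your Step~1 determinant data enters precisely at this last contradiction, so any correct proof must couple the structural analysis to the homology count rather than derive ``lens space'' independently as you attempt.
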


Before starting the proof, we need to recall some definitions and result from \cite{Lam00}.
One can associate to a knot $K \subset S^3$ the \emph{$\pi$-orbifold group} $G^\orb(K) = G(K) /N$, where $N$ is the subgroup of $\pi_1(S^3 \setminus K)$ normally generated by the square of a meridian (see \cite{BoZi89}).
Let $\Sigma_2(K)$ denote the $2$-fold cover of $S^3$ branched along $K$.
Then the following exact sequence holds:
\[
1 \to \pi_1(\Sigma_2(K)) \to G^\orb(K) \to \Z/2\Z \to 1.
\]

\begin{proposition}[{\cite[Theorem~3.3]{Lam00}}]
\label{prop:orbifold}
Let $K$ be a symmetric union with partial knot $K_D$.
Then there is an epimorphism $\varphi_D^\orb\colon G^\orb(K) \twoheadrightarrow G^\orb(K_D)$ which sends the image of a meridian of $K$ to that of $K_D$ and which kills the image of the preferred longitude of $K$.
\end{proposition}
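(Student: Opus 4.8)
The plan is to build $\varphi_D^\orb$ directly from a Wirtinger presentation of the symmetric union diagram, exploiting the defining relation of the $\pi$-orbifold group. Recall that $G^\orb(K)$ is obtained from a Wirtinger presentation $\langle x_1,\dots,x_N \mid r_1,\dots,r_{N-1}\rangle$ of $G(K)$ by adjoining the relation $\mu^2=1$ for one meridian $\mu$; since all Wirtinger generators are conjugate to $\mu$, every generator $x_i$ becomes an involution in $G^\orb(K)$, so that $x_i^{-1}=x_i$. The same holds in $G^\orb(K_D)$. This single observation is what makes the statement valid for an arbitrary, possibly skew, symmetric union, whereas Proposition~\ref{prop:epimorphism} needed the even hypothesis.

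First I would set up the generator map. Using the labelled diagram of $D\cup D^\ast(\infty,n_1,\dots,n_k)$ as in Figure~\ref{fig:Lamm_epi}, I send each Wirtinger generator of the right half (coming from $D$) and its mirror partner from $D^\ast$ to the corresponding Wirtinger generator of the diagram $D$ of $K_D$, and I send the generators created inside the $i$-th twist region and inside the connecting $\infty$-tangle to the single generator onto which they fold. This is precisely Lamm's assignment, now read in the orbifold groups.

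Second, and this is the crux, I would check that every Wirtinger relator of the symmetric union is carried to a relator of $G^\orb(K_D)$. A crossing relation has the form $z=x^{\epsilon}yx^{-\epsilon}$ with $\epsilon=\pm1$ recording the orientation of the strands; in $G^\orb$ one has $x^{-1}=x$, hence $x^{\epsilon}yx^{-\epsilon}=xyx$ independently of $\epsilon$. Consequently the relations coming from the crossings of $D$ and of $D^\ast$ map to the orientation-insensitive crossing relations of $K_D$, while the relations from the twist regions and the $\infty$-tangle become trivial after folding. The parity of the $n_i$ never enters, which is exactly why the even hypothesis can be dropped here. Surjectivity is immediate because the images of the generators already generate $G(K_D)$, and the map sends the class of a meridian of $K$ to that of $K_D$ by construction.

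Finally I would treat the longitude. For an even symmetric union the claim is immediate: Proposition~\ref{prop:epimorphism} gives $\varphi_D(\lambda_K)=1$ in $G(K_D)$, and since $\varphi_D$ sends meridians to meridians it carries the normal subgroup $N\subset G(K)$ into the corresponding subgroup of $G(K_D)$, hence descends to $\varphi_D^\orb$, which then kills the image of $\lambda_K$. For a general symmetric union I would argue by the reflection symmetry of the diagram: the preferred ($0$-framed) longitude traverses the right half and then, through the $\infty$-tangle, the mirror image of that path in the left half, so under the fold its image is a word times the formal inverse of the same word, which collapses to $1$ in $G^\orb(K_D)$ (here one again uses that the generators are involutions to match the two halves). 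I expect this cancellation in the skew case to be the main point requiring care, since one must track the framing contributions of the odd twist regions and confirm that the $\infty$-tangle glues the two halves with the correct orientation for the two contributions to cancel.
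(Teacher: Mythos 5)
Your construction is, in substance, the right one, and it is worth saying up front that the paper itself offers no proof of this proposition: it is quoted directly from \cite[Theorem~3.3]{Lam00}, and the only in-paper discussion of the mechanism is the description of the generator-folding map (Figure~\ref{fig:Lamm_epi}) given for the even case of Proposition~\ref{prop:epimorphism}. Your first two steps reconstruct exactly Lamm's argument: in $G^\orb$ every Wirtinger generator is an involution, so a crossing relation $z=x^{\epsilon}yx^{-\epsilon}$ collapses to $z=xyx$ regardless of the sign $\epsilon$, the folded assignment therefore respects all relations (the twist-region relations fold to the trivially satisfied $w=www$, and the $\infty$-tangle contributes no crossings), and surjectivity together with meridian-preservation is immediate. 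This part is correct and correctly isolates why the even hypothesis can be dropped at the orbifold level. Your reduction of the longitude claim in the even case to Proposition~\ref{prop:epimorphism} (the meridian-preserving $\varphi_D$ carries the normal closure of $\mu_K^2$ into that of $\mu_{K_D}^2$, hence descends) is also fine.

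The genuine gap is the longitude step in the skew case, and you flag it yourself without closing it. The image of $\lambda_K$ is not literally ``a word times the formal inverse of the same word.'' Writing $\lambda_K$ as the product of overpass generators read along $K$ times the framing correction $\mu_K^{-w}$, where $w=\sum_i n_i$ (the writhes of $D$ and $D^\ast$ cancel), it is true that a piece of $K$ in the right half and its mirror piece contribute, after folding, words that are letter-by-letter reverses of one another, and reversal equals inversion for words in involutions. But these paired subwords are \emph{not adjacent} in the longitude word: they are separated by the letters picked up inside the twist regions (all folding to the single generator of the corresponding arc of $D$), and when some $n_i$ is odd the two strands of that twist region swap halves, so the order in which the mirror pieces are traversed is permuted; moreover $\mu_K^{-w}$ then folds to a nontrivial involution, since the abelianization of $G^\orb(K_D)$ is $\Z/2\Z$. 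One can check that the parities are consistent (the diagram of $K$ has $2c(D)+\sum_i n_i$ crossings, and adding the exponent $w=\sum_i n_i$ gives total exponent $\equiv 0 \bmod 2$), but passing from this parity count to the actual collapse of the interleaved word to $1$ requires a telescoping conjugation bookkeeping through the axis tangles. That bookkeeping is the real content of Lamm's proof of the skew case, and as written your proposal asserts it rather than proves it.
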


The epimorphism $\varphi_D^\orb\colon G^\orb(K) \twoheadrightarrow G^\orb(K_D)$ sends the image of a meridian of $K$ to the image of a meridian of $K_D$.
In particular, $\varphi_D^\orb$ sends the subgroup $\pi_1(\Sigma_2(K))$ of index $2$ onto the subgroup $\pi_1(\Sigma_2(K_D))$ of index $2$ since it preserves the images of the meridians.
Therefore $\varphi_D^\orb$ induces an epimorphism $\tilde{\varphi}_D\colon \pi_1(\Sigma_2(K)) \twoheadrightarrow \pi_1(\Sigma_2(K_D))$.
Here let us consider the knot $K= K(\frac{1}{3}, \frac{2}{3}, \frac{4}{5})$. 

\begin{lemma}\label{lem:small}
The $2$-fold branched cover $\Sigma_2(K_D)$ is a small $3$-manifold.
\end{lemma}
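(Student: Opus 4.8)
The plan is to prove Lemma~\ref{lem:small} by exploiting the epimorphism $\tilde{\varphi}_D\colon \pi_1(\Sigma_2(K)) \twoheadrightarrow \pi_1(\Sigma_2(K_D))$ constructed above, together with the fact that $\Sigma_2(K)$ is the Seifert fibered space associated to the Montesinos knot $K = K(\frac{1}{3},\frac{2}{3},\frac{4}{5})$. Recall that a closed orientable $3$-manifold is \emph{small} if it contains no closed essential (incompressible and non-boundary-parallel) surface. First I would identify $\Sigma_2(K)$ explicitly: from the Seifert invariants $V(0;e_0;\frac{1}{3},\frac{2}{3},\frac{4}{5})$ with base orbifold $S^2(3,3,5)$, its fundamental group is a central extension of a triangle-type orbifold group. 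Since the base orbifold $S^2(3,3,5)$ is spherical (as $\frac{1}{3}+\frac{1}{3}+\frac{1}{5} > 1$), the Seifert manifold $\Sigma_2(K)$ is a spherical space form, hence has finite fundamental group; in any case it is a small Seifert fibered space, containing no closed essential surface.

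The key mechanism is that smallness is inherited downward through $\pi_1$-surjections in this setting. More precisely, I would argue as follows. The manifold $\Sigma_2(K_D)$ is itself the $2$-fold branched cover of a knot $K_D$, so it is a closed orientable $3$-manifold which is irreducible (the $2$-fold branched cover of a knot in $S^3$ is irreducible when the knot is prime, and one reduces to this case) with infinite or finite $\pi_1$ according to the geometry of $K_D$. The existence of the epimorphism $\tilde{\varphi}_D\colon \pi_1(\Sigma_2(K)) \twoheadrightarrow \pi_1(\Sigma_2(K_D))$ forces $\pi_1(\Sigma_2(K_D))$ to be a quotient of the small Seifert group $\pi_1(\Sigma_2(K))$. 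The heart of the argument is then: if $\Sigma_2(K_D)$ contained a closed essential surface $S$, then $\pi_1(S)$ would inject into $\pi_1(\Sigma_2(K_D))$, giving a subgroup structure (a closed surface subgroup of genus $\geq 1$) that must already be ``visible'' in the quotient. I would combine this with the structural rigidity of $\pi_1(\Sigma_2(K))$ — a Seifert fibered group with spherical base orbifold — to conclude that any such subgroup, and hence any essential surface, cannot exist in $\Sigma_2(K_D)$.

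The main technical obstacle is controlling what kinds of $3$-manifold groups can arise as a quotient of a small Seifert fibered group, and translating an algebraic constraint on the quotient back into the geometric statement that $\Sigma_2(K_D)$ is small. The clean way to handle this is to invoke the classification of $3$-manifolds admitting an essential surface together with the fact that an epimorphism onto $\pi_1(\Sigma_2(K_D))$ constrains its first Betti number and its structure as a group acting on a geometry. Concretely, since $\Sigma_2(K)$ is a small Seifert fibered space (indeed with finite or virtually cyclic structure coming from the spherical base orbifold $S^2(3,3,5)$), its fundamental group is itself small in the group-theoretic sense, and smallness of the target follows because $\Sigma_2(K_D)$ is a Seifert fibered or geometric $3$-manifold whose base orbifold is forced by the surjection to have non-negative Euler characteristic.

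Finally, I expect the cleanest route is to combine the above with the forthcoming determination (in Proposition~\ref{prop:11a_201partial}) of the candidate partial knots: for each candidate $K_D$ one directly recognizes $\Sigma_2(K_D)$ from the knot and checks smallness. The essential step — and the one I would write out carefully — is the inheritance of smallness along $\tilde{\varphi}_D$, since the identification of $\Sigma_2(K_D)$ for the specific candidate knots is then a routine Seifert-invariant computation. The hard part is ruling out essential tori and higher-genus essential surfaces in $\Sigma_2(K_D)$ purely from the existence of the surjection, which I would establish by showing that any essential surface subgroup would have to survive in the small quotient $\pi_1(\Sigma_2(K))$, contradicting its Seifert structure with spherical base orbifold.
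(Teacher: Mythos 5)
Your proposal contains a decisive factual error at its foundation: you assert that the base orbifold $S^2(3,3,5)$ is spherical because $\frac{1}{3}+\frac{1}{3}+\frac{1}{5}>1$, but in fact $\frac{1}{3}+\frac{1}{3}+\frac{1}{5}=\frac{13}{15}<1$, so the base orbifold is \emph{hyperbolic} and $\pi_1(\Sigma_2(K))$ is infinite (a central extension of the hyperbolic triangle group $T(3,3,5)$ by the infinite cyclic group generated by a regular fiber) --- not a spherical space form with finite fundamental group. Everything you build on finiteness or on a ``spherical base'' therefore collapses. Smallness of $\Sigma_2(K)$ is still true, but it is exactly what must be proved, and the paper does so via Waldhausen \cite{Wal67}: a closed incompressible surface in a Seifert fibered space is either a vertical torus or a horizontal surface; a vertical torus would project to an essential simple closed curve on $S^2(3,3,5)$, which does not exist on a three-cone-point sphere, while a horizontal surface would be non-separating in the rational homology sphere $\Sigma_2(K)$, which is impossible. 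Your proposal nowhere supplies this dichotomy argument.

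The second genuine gap is your transfer mechanism, which runs in the wrong direction. Subgroups of a quotient do not lift along a surjection, so a surface subgroup of $\pi_1(\Sigma_2(K_D))$ need not be ``visible'' in $\pi_1(\Sigma_2(K))$ (note you even refer to $\pi_1(\Sigma_2(K))$ as ``the quotient,'' though it is the \emph{source} of $\tilde{\varphi}_D$). What does transfer along an epimorphism is an action on a tree: an essential closed surface in $\Sigma_2(K_D)$ splits $\pi_1(\Sigma_2(K_D))$ as an amalgamated free product or HNN extension, hence gives a nontrivial action without edge inversions on the associated Bass--Serre tree; composing with $\tilde{\varphi}_D$ yields a nontrivial action of $\pi_1(\Sigma_2(K))$ on the same tree, and Culler--Shalen \cite{CuSh83} then produces a closed essential surface in $\Sigma_2(K)$, contradicting the Waldhausen argument above. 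This pull-back-of-tree-actions step plus \cite{CuSh83} is the missing idea, and without it your ``inheritance of smallness'' has no proof. Finally, your suggested fallback --- identifying the candidate partial knots via Proposition~\ref{prop:11a_201partial} and checking smallness of each $\Sigma_2(K_D)$ directly --- is circular, since that proposition is itself proved using Lemma~\ref{lem:small} (through Lemma~\ref{lem:seifertfibered}).
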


\begin{proof}
Let assume that $\Sigma_2(K_D)$ is not small and contains an orientable closed incompressible surface $F$.
Then its fundamental group $\pi_1(\Sigma_2(K_D))$ splits along the fundamental group $\pi_{1}(F)$ as an amalgamated free product or a HNN-extension. 
In particular, $\pi_1(\Sigma_2(K_D))$ acts non-trivially, without edge inversions, on the Bass-Serre tree $\mathcal{T}$ associated to this algebraic decomposition.
The epimorphism $\tilde{\varphi}_D\colon \pi_1(\Sigma_2(K)) \twoheadrightarrow \pi_1(\Sigma_2(K_D))$ induces a non-trivial action, without edge inversions, of the group $\pi_1(\Sigma_2(K))$ on the Bass-Serre tree $\mathcal{T}$.
It follows from \cite{CuSh83} that the manifold $\Sigma_2(K)$ splits along some closed orientable incompressible surface.

The $3$-manifold $\Sigma_2(K) = V(0; \frac{9}{5};\frac{1}{3}, \frac{2}{3}, \frac{4}{5})$ is Seifert fibered with base the hyperbolic $2$-dimensional orbifold $ S^2(3, 3, 5)$ with underlying space  $S^2$ and three singular points with branching indices $\{3, 3, 5\}$. 
By Waldhausen~\cite{Wal67}, a closed incompressible surface in $\Sigma_2(K)$ is either a \emph{vertical} torus which is a union of fibers or a \emph{horizontal} surface transverse to the fibers.
In the first case, the projection of the incompressible vertical torus on the base $S^2(3, 3, 5)$ would be an essential simple closed curve, which does not exist on such an orbifold. 
In the second case, since the base of the Seifert fibration is orientable, the horizontal surface would be non-separating in the rational homology sphere $\Sigma_2(K)$ which is impossible.
Therefore $\Sigma_2(K)$ cannot split along some closed incompressible surface and $\Sigma_2(K_D)$ is a small closed orientable $3$-manifold.
\end{proof}

\begin{lemma}
\label{lem:seifertfibered}
The $2$-fold branched cover $\Sigma_2(K_D)$ is a lens space or a Seifert fibered $3$-manifold with three exceptional fibers and a base orbifold $S^2(\alpha_1, \alpha_2, \alpha_3)$ with underlying space $S^2$ and three singular points with branching indices $\{\alpha_1, \alpha_2, \alpha_3\}$. 
Moreover, at most one of $\alpha_1, \alpha_2, \alpha_3$ can be even.
\end{lemma}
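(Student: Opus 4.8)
The plan is to combine the smallness of $\Sigma_2(K_D)$ from Lemma~\ref{lem:small} with Thurston's geometrization and the epimorphism $\tilde{\varphi}_D$ to first show that $M:=\Sigma_2(K_D)$ is Seifert fibered, and then to read off the base orbifold and the parities of the branching indices from $H_1(M)$. First I would record two facts about $M$. Since $K_D$ is a knot, $M$ is a rational homology sphere whose first homology has \emph{odd} order: indeed $|H_1(M;\Z)|=|\Delta_{K_D}(-1)|$ and $\Delta_{K_D}(-1)\equiv\Delta_{K_D}(1)=\pm1\pmod 2$ (equivalently, $|H_1(M)|$ divides $|H_1(\Sigma_2(K))|=81$ through $\tilde{\varphi}_D$). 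Being small, $M$ contains no closed orientable incompressible surface, so it is irreducible and atoroidal. By geometrization it is therefore geometric, and since it is an irreducible atoroidal rational homology sphere the Sol, Euclidean, and $S^2\times\R$ geometries are excluded; hence $M$ is either hyperbolic or Seifert fibered.

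Next I would rule out the hyperbolic case, which is the main obstacle. Recall that $\Sigma_2(K)$ is Seifert fibered over the hyperbolic orbifold $S^2(3,3,5)$, so $\pi_1(\Sigma_2(K))$ is a central extension
\[
1 \to \langle h\rangle \to \pi_1(\Sigma_2(K)) \to \Delta(3,3,5) \to 1,
\]
where $h$ is the class of a regular fiber and $\Delta(3,3,5)=\pi_1^{\orb}(S^2(3,3,5))$ is the hyperbolic triangle group, generated by elements of orders $3,3,5$. Suppose $M$ were hyperbolic. Then $\pi_1(M)$ is torsion-free with trivial center. As $h$ is central and $\tilde{\varphi}_D$ is surjective, $\tilde{\varphi}_D(h)$ is central in $\pi_1(M)$, so $\tilde{\varphi}_D(h)=1$; thus $\tilde{\varphi}_D$ factors through $\pi_1(\Sigma_2(K))/\langle h\rangle\cong\Delta(3,3,5)$, exhibiting $\pi_1(M)$ as a quotient of the triangle group. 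Since $\Delta(3,3,5)$ is generated by torsion elements and $\pi_1(M)$ is torsion-free, all these generators map to the identity and $\pi_1(M)$ is trivial, contradicting that a closed hyperbolic $3$-manifold has infinite fundamental group. Therefore $M$ is Seifert fibered (when $\pi_1(M)$ is finite this is already clear, the spherical geometry being Seifert).

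Now I would identify the base orbifold. As $M$ is an orientable Seifert fibered rational homology sphere, its base has underlying surface $S^2$ (positive genus or a non-orientable base would force $b_1(M)>0$). Smallness forbids a vertical incompressible torus, hence an essential simple closed curve in the base; such a curve exists on $S^2(\alpha_1,\dots,\alpha_n)$ precisely when $n\ge 4$. So there are at most three cone points: with at most two exceptional fibers $M$ is a lens space, and with exactly three it is Seifert fibered over $S^2(\alpha_1,\alpha_2,\alpha_3)$, as claimed.

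Finally, the parity statement follows from the homology formula recalled in Section~\ref{sec:montesinos}. Writing $M=V(0;e;\tfrac{\beta_1}{\alpha_1},\tfrac{\beta_2}{\alpha_2},\tfrac{\beta_3}{\alpha_3})$ in the three-fiber case, one has
\[
|H_1(M)| = |e|\,\alpha_1\alpha_2\alpha_3 = |\beta_1\alpha_2\alpha_3+\beta_2\alpha_1\alpha_3+\beta_3\alpha_1\alpha_2|.
\]
If two of the $\alpha_i$ were even, then each of the three summands would be even, forcing $|H_1(M)|$ to be even and contradicting the oddness established in the first step. Hence at most one of $\alpha_1,\alpha_2,\alpha_3$ is even, completing the proof. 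The only delicate point is the hyperbolic exclusion; the remaining steps are standard facts about Seifert fibered rational homology spheres together with a mod-$2$ computation.
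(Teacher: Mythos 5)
Your proposal is correct in substance but follows a genuinely different route from the paper. The paper splits on whether $\pi_1(\Sigma_2(K_D))$ is finite or infinite: in the finite case it invokes the orbifold theorem to get elliptic geometry (lens space or elliptic Seifert manifold); in the infinite case it uses the torsion-generated triangle-group argument to show the center of $\pi_1(\Sigma_2(K_D))$ is non-trivial, concludes Seifert fibering from the convergence-group results of Casson--Jungreis and Gabai, and then pins down $r=3$ exceptional fibers via the rank formula $r-1=\rank(\pi_1(\Sigma_2(K_D)))\leq \rank(\pi_1(\Sigma_2(K)))=2$ of Boileau--Zieschang. You instead invoke full geometrization to reduce at once to the dichotomy ``hyperbolic or Seifert fibered,'' kill the hyperbolic case by the same central-fiber/torsion argument (your exclusion of the hyperbolic case is literally the paper's centerless-case contradiction, repackaged), and then bound the number of cone points by smallness alone: no vertical incompressible torus forces at most three singular points on the base. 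Your route is shorter and avoids both the orbifold theorem and the rank bound, at the cost of using Perelman's geometrization where the paper stays within the pre-Perelman toolkit; your counting of cone points via smallness is arguably cleaner than the paper's rank argument. The parity argument at the end is identical to the paper's.

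One step in your base-orbifold identification is wrong as stated: the claim that ``a non-orientable base would force $b_1(M)>0$.'' This is false --- prism manifolds are Seifert fibered over $\R P^2$ with one exceptional fiber and are rational homology spheres, and there are Seifert rational homology spheres over $\R P^2(\alpha_1,\alpha_2)$ admitting no fibration over $S^2$. The correct way to exclude a non-orientable base is the oddness of $|H_1(M)|$, which you already established in your first step: if $M$ is orientable and Seifert fibered over a non-orientable base orbifold $B$, the composition $\pi_1(M)\twoheadrightarrow \pi_1^\orb(B)\to \Z/2\Z$ with the orientation character of $B$ is surjective, so $H_1(M;\Z)$ surjects onto $\Z/2\Z$ and $|H_1(M;\Z)|$ is even, contradicting $|H_1(M;\Z)|=\det K_D$ odd. (The positive-genus orientable case is fine as you argue, since there $b_1(M)\geq 2g>0$.) With this local repair your proof is complete.
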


\begin{proof}
If $\pi_1(\Sigma_2(K_D))$ is \emph{finite}, then by the orbifold theorem (see \cite{BoPo01}) the $2$-fold branched cover $\Sigma_2(K_D)$ carries an elliptic geometry, that is to say it is either a lens space or an elliptic Seifert fibered $3$-manifold with three exceptional fibers and finite fundamental group. 
In this last case, the base orbifold $S^2(\alpha_1, \alpha_2, \alpha_3)$ has a finite fundamental group and the triple $(\alpha_1, \alpha_2, \alpha_3)$ is one of the platonic triples $(2, 3, 3),\ (2, 3, 4),\ (2, 3, 5)$ or $(2, 2, n)$ for $n \geq 2$.

Let assume now that $\pi_1(\Sigma_2(K_D))$ is \emph{infinite}.
The $3$-manifold $\Sigma_2(K) = V(0; \frac{9}{5};\frac{1}{3}, \frac{2}{3}, \frac{4}{5})$ is Seifert fibered with base the hyperbolic $2$-dimensional orbifold $ S^2(3, 3, 5)$.
Its fundamental group $\pi_1(\Sigma_2(K))$ is infinite and contains a center $Z$ which is infinite cyclic generated by a regular fiber. 
The quotient $\Gamma_K = \pi_1(\Sigma_2(K))/Z$ is the orbifold fundamental group of the base $ S^2(3, 3, 5)$. 
It is the hyperbolic triangle group $T(3, 3, 5)$ which is a discrete subgroup of $\PSL(2, \R)$ generated by rotations of angles $\{\frac{2\pi}{3}, \frac{2\pi}{3}, \frac{2\pi}{5}\}$ around the vertices of a hyperbolic triangle of angles $\{\frac{\pi}{3}, \frac{\pi}{3}, \frac{\pi}{5}\}$ (see \cite{Kna68}). 
Therefore the group $\Gamma_K$ is generated by torsion elements.

If $\pi_1(\Sigma_2(K_D))$ is centerless, then $\tilde{\varphi}_D(Z) =\{1\}$ and $\tilde{\varphi}_D$ induces an epimorphism from the orbifold group $\Gamma_K = \pi_1(\Sigma_2(K))/Z$ onto $\pi_1(\Sigma_2(K_D))$. 
By Lemma~\ref{lem:small}, the 3-manifold $\Sigma_2(K_D)$ is small, hence it is aspherical since $\pi_1(\Sigma_2(K_D))$ is infinite by assumption. 
Therefore $\pi_1(\Sigma_2(K_D))$ is torsion-free.
On the other hand, $\Gamma_K$ is generated by torsion elements.
Thus, the image $\tilde{\varphi}_D(\Gamma_K)$ must be trivial and this is impossible.
Therefore, $\pi_1(\Sigma_2(K_D))$ has a non-trivial center. 
It follows from \cite{CaJu94} and \cite{Gab92} that $\Sigma_2(K_D)$ is Seifert fibered. 
Since $\Sigma_2(K_D)$ is a rational homology sphere, it is a Seifert fibered manifold with base $S^2$ and $r$ exceptional fibers with $r-1 = \rank(\pi_1(\Sigma_2(K_D)))$ by \cite[Theorem~1.1(ii)]{BoZi84}. 
Moreover $\rank(\pi_1(\Sigma_2(K_D))) \leq \rank(\pi_1(\Sigma_2(K))) = 2$.
Since $\Sigma_2(K_D)$ is aspherical, it cannot be a lens space nor $S^1 \times S^2$, and thus $\Sigma_2(K_D)$ must have $\geq 3$ exceptional fibers. 
Therefore $\rank(\pi_1(\Sigma_2(K_D))) \geq 2$, and hence $\rank(\pi_1(\Sigma_2(K_D))) = 2$. 
It follows that $\Sigma_2(K_D)$ is a Seifert fibered manifold with $r = 3$ exceptional fibers and a base with underlying space $S^2$ and three singular points with branching indices $\{\alpha_1, \alpha_2, \alpha_3\}$.

Since $K_D$ is a knot, 
\[
|H_1(\Sigma_2(K_D); \Z)| = |e_0|\alpha_{1}\alpha_{2}\alpha_{3} = |\beta_1\alpha_2 \alpha_3 + \alpha_1\beta_2 \alpha_3 + \alpha_1\alpha_2 \beta_3|
\]
is odd. 
This implies that at most one of the $\alpha_i$ can be even
\end{proof}

\begin{proof}[Proof of Proposition~\ref{prop:11a_201partial}]
If the Montesinos knot $K = K(\frac{1}{3}, \frac{2}{3}, \frac{4}{5})$ is a symmetric union with partial knot $K_D$, then, by Lemma~\ref{lem:seifertfibered}, $\Sigma_2(K_D)$ is a lens space or a Seifert fibered $3$-manifold with three exceptional fibers and a base orbifold $S^2(\alpha_1, \alpha_2, \alpha_3)$.

If $\Sigma_2(K_D)$ is a lens space, then, by the orbifold theorem, $K_D$ is a $2$-bridge knot.
Since $|H_1(\Sigma_2(K_D); \mathbb{Z})| = {\det K_D} = \sqrt{\det K} = 9$, $K_D$ is the torus knot $9_1$ of type $(2,9)$ or the $2$-bridge knot $K(\frac{9}{2}) = 6_1$ up to reversal of orientation.

If $\Sigma_2(K_D)$ is  a Seifert fibered $3$-manifold with three exceptional fibers, the base orbifold $S^2(\alpha_1, \alpha_2, \alpha_3)$ can be elliptic, euclidean or hyperbolic. Since at most one of $\alpha_1, \alpha_2, \alpha_3$ can be even, in the elliptic case 
the triple $(\alpha_1, \alpha_2, \alpha_3)$ can be only one of the platonic triples $(2, 3, 3)$ or $(2, 3, 5)$, while in the euclidean case it can be only $(3, 3, 3)$. 
In the hyperbolic case the orbifold group $\pi_1^\orb(S^2(\alpha_1, \alpha_2, \alpha_3))$ is the hyperbolic triangle group $T(\alpha_1, \alpha_2, \alpha_3)$ with $\frac{1}{\alpha_1} + \frac{1}{\alpha_2} + \frac{1}{\alpha_3} < 1$ which is a discrete subgroup of $\PSL(2, \R)$ (see \cite{Kna68}). It follows that in all the cases the orbifold group $\pi_1^\orb(S^2(\alpha_1, \alpha_2, \alpha_3))$ has no center. 
Let $\pi_D\colon \pi_1(\Sigma_2(K_D)) \twoheadrightarrow \pi_1^\orb(S^2(\alpha_1, \alpha_2, \alpha_3))$ be the quotient epimorphism by the center.
Since $\pi_1^\orb(S^2(\alpha_1, \alpha_2, \alpha_3))$ has no center, the epimorphism
\[
\pi_D \circ \tilde{\varphi}_D\colon \pi_1(\Sigma_2(K)) \twoheadrightarrow \pi_1(\Sigma_2(K_D)) \twoheadrightarrow \pi_1^\orb(S^2(\alpha_1, \alpha_2, \alpha_3))
\]
kills the center of $\pi_1(\Sigma_2(K))$ and induces an epimorphism $\bar{\varphi}_D\colon \pi_1^\orb(S^2(3, 3, 5)) \twoheadrightarrow \pi_1^\orb(S^2(\alpha_1, \alpha_2, \alpha_3))$ between the orbifold groups of the bases. 
Given the presentation of the group
\[
\pi_1^\orb(S^2(3, 3, 5)) = \ang{x, y, z \mid x^3 = y^3 = z^5 =xyz = 1},
\]
the image $\bar{\varphi}_{D}(z)$ is not trivial, otherwise
\[
\pi_1^\orb(S^2(\alpha_1, \alpha_2, \alpha_3)) = \bar{\varphi}_{D}(\pi_1^\orb(S^2(3, 3, 5))) \cong \Z/3\Z,
\]
which is not possible. 
Therefore $\bar{\varphi}_{D}(z)$ must be of order $5$ in $\pi_1^\orb(S^2(\alpha_1, \alpha_2, \alpha_3))$. 
This means that the group $\pi_1^\orb(S^2(\alpha_1, \alpha_2, \alpha_3))$ must contain an element of order $5$. 
This is not possible for the finite group $\pi_1^\orb(S^2(2, 3, 3))$ which is of order $12$ nor the euclidean group $\pi_1^\orb(S^2(3, 3, 3))$ since the elements of finite order in this group have order $3$. 
Therefore either the orbifold group $\pi_1^\orb(S^2(\alpha_1, \alpha_2, \alpha_3))$ is the (finite) icosahedral group $I = T(2, 3, 5)$ or it is a hyperbolic triangle group $T(\alpha_1, \alpha_2, \alpha_3)$.

If $\pi_1^\orb(S^2(\alpha_1, \alpha_2, \alpha_3))$ is the icosahedral group, $\Sigma_2(K_D)$ is an elliptic Seifert fibered $3$-manifold $V(0; e_0; \frac{1}{2}, \frac{\beta_2}{3}, \frac{\beta_3}{5})$, and then $e_0 = \frac{1}{2} +\frac{\beta_2}{3} + \frac{\beta_3}{5}$ and $| H_1(\Sigma_2(K_D); \Z) | = 30\vert e_0 \vert = \vert 15 + 10 \beta_2 + 6\beta_3\vert$. 
Since $\beta_2$ is coprime with $3$, $|H_1(\Sigma_2(K_D); \Z)| =  \det K_D \ = \sqrt{\det K}$ is coprime with $3$ and cannot be equal to $9$. 
Therefore the orbifold group $\pi_1^\orb(S^2(\alpha_1, \alpha_2, \alpha_3))$ is the hyperbolic triangle group $T(\alpha_1, \alpha_2, \alpha_3)$.

Let us consider the induced epimorphism
\[
\bar{\varphi}_D \colon \pi_1^\orb(S^2(3, 3, 5)) = T(3, 3, 5) \twoheadrightarrow \pi_1^\orb(S^2(\alpha_1, \alpha_2, \alpha_3))= T(\alpha_1, \alpha_2, \alpha_3)
\]
between the orbifold fundamental groups of the bases.
The presentation of the triangle group $T(3, 3, 5) =  \langle x, y, z \mid x^3 = y^3 = z^5 =xyz = 1 \rangle$ shows that each image $\bar{\varphi}_{D}(x)$, $\bar{\varphi}_{D}(y)$ and $\bar{\varphi}_{D}(z)$ is not trivial otherwise $T(\alpha_1, \alpha_2, \alpha_3) = \bar{\varphi}_D(T(3, 3, 5))$ would be trivial or $\Z/3\Z$ which is not possible.
Therefore $\bar{\varphi}_{D}(x)$ and $\bar{\varphi}_{D}(y)$ are of order $3$ while $\bar{\varphi}_{D}(z)$ is of order $5$.
Thus, the two elliptic elements $a = \bar{\varphi}_{D}(y)$ and $b = \bar{\varphi}_{D}(z)$ generate the discrete non-abelian group $T(\alpha_1, \alpha_2, \alpha_3) \subset \PSL(2, \R)$. 
Up to taking suitable powers $u=a^{k}$ and $v = b^{\ell}$ to normalize the matrix representatives of $u$ and $v$ in $\SL(2,\R)$, it follows that at least one of Cases (I)--(VII) in \cite[Theorem~2.3]{Kna68} holds true. 
Since the triangle group $T(\alpha_1, \alpha_2, \alpha_3)$ is co-compact, Case (II) is impossible by \cite[Figures~2 and 3]{Kna68} which exhibit non-compact fundamental domains in this case.
Cases (III) and (VI) do not hold because the generators $u$ and $v$ do not have the same order. 
Case (IV) is not possible because none of the generators $u$ or $v$ has order $2$. 
Cases (V) and (VII) are not possible too because the generator $v$ is of order $5 < 7$. 
Therefore, only Case (I) can be true, and thus, by \cite[Proposition~2.2]{Kna68}, $u$ and $v$ generate a triangle group $T(3, 5, n)$ with $n \geq 3$ since the group $T(3, 5, n)$ is infinite. Since we have the epimorphism 
\[
\bar{\varphi}_D \colon \pi_1^\orb(S^2(3, 3, 5)) = T(3, 3, 5) \twoheadrightarrow \pi_1^\orb(S^2(\alpha_1, \alpha_2, \alpha_3))= T(3, 3, n),
\]
\cite[Lemma~2.5]{Ron92} implies $-\chi(S^2(3, 3, 5)) \geq -\chi(S^2(3, 3, n))$.
So, $1 - (\frac{1}{3} + \frac{1}{5} + \frac{1}{3}) \geq 1 - (\frac{1}{3} + \frac{1}{5} + \frac{1}{n})$, and hence $n \leq 3$. 
It follows that $n= 3$.
Therefore, $T(\alpha_1, \alpha_2, \alpha_3) = T(3, 5, 3)$ and the induced epimorphism
\[
\bar{\varphi}_D \colon \pi_1^\orb(S^2(3, 3, 5)) = T(3, 3, 5) \twoheadrightarrow \pi_1^\orb(S^2(\alpha_1, \alpha_2, \alpha_3))= T(3, 3, 5)
\]
must be an isomorphism because triangle groups are hopfian.
In particular, $(\alpha_1, \alpha_2, \alpha_3) = (3, 3, 5)$ up to permutations.
We have the two exact sequences:
\begin{gather*}
 1 \to Z \cong \Z \to \pi_1(\Sigma_2(K)) \to T(3, 3, 5) \to 1, \\
 1 \to \tilde{\varphi}_{D}(Z)\cong \Z \to \pi_1(\Sigma_2(K_D)) \to T(3, 3, 5) \to 1.
\end{gather*}
The epimorphism $\tilde{\varphi}_D\colon \pi_1(\Sigma_2(K)) \twoheadrightarrow \pi_1(\Sigma_2(K_D))$ induces injective homomorphisms  both on the center $Z$ and on the base $T(3, 3, 5)$. 
Hence it induces an injective homomorphism on $\pi_1(\Sigma_2(K))$. 
Therefore $\tilde{\varphi}_D$ is an isomorphism which induces an isomorphism between the first homology groups $H_1(\Sigma_2(K); \Z)$ and $H_1(\Sigma_2(K_D); \Z)$. 
Then the orders of the first homology groups must be the same.
This is not the case since
\[
|H_1(\Sigma_2(K_D); \Z)| = \det K_D = 9 \text{ and } |H_1(\Sigma_2(K); \Z)| = \det K = 81.
\]
It follows that $\Sigma_2(K_D)$ cannot be a Seifert fibered $3$-manifold with three exceptional fibers and hence it can only be a lens space.
Therefore the only possibilities for $K_D$ are the $(2,9)$-torus knot or the $2$-bridge knot $6_1$.
\end{proof}

\begin{corollary}\label{cor:11a_201}
The Montesinos knot $K(\frac{1}{3}, \frac{2}{3}, \frac{4}{5})$ cannot be an even symmetric union.
\end{corollary}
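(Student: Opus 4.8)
The plan is to assume, for contradiction, that $11a_{201}$ is an even symmetric union, and to eliminate the two candidate partial knots supplied by Proposition~\ref{prop:11a_201partial}. By that proposition the partial knot $K_D$ is, up to mirror image, either the torus knot $9_1 = T(2,9)$ or the $2$-bridge knot $6_1$. The case $K_D = 9_1$ is immediate: since $11a_{201}$ is alternating one has $g(11a_{201}) = 2$, whereas $g(9_1) = 4$, so the inequality $g(11a_{201}) \ge 2g(9_1) = 8$ demanded by Corollary~\ref{cor:genus_ineq} fails. Equivalently, Proposition~\ref{prop:alexander} would force $\Delta_{11a_{201}}(t) \doteq \Delta_{9_1}(t)^2$, which is impossible on degrees since $\deg\Delta_{11a_{201}} = 4 \ne 16 = 2\deg\Delta_{9_1}$.

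This leaves the case $K_D = 6_1$, which is genuinely delicate: here $g(6_1) = 1$, so the genus bound $g(11a_{201}) = 2 \ge 2g(6_1) = 2$ of Corollary~\ref{cor:genus_ineq} is tight, and Proposition~\ref{prop:alexander} is also satisfied because $\Delta_{11a_{201}}(t) \doteq (2 - 5t + 2t^2)^2 = \Delta_{6_1}(t)^2$. Thus neither the genus nor the ordinary Alexander polynomial obstructs, and I would pass to the twisted Alexander polynomials governed by Theorem~\ref{thm:TAP}. If $11a_{201}$ were an even symmetric union with partial knot $6_1$, Proposition~\ref{prop:epimorphism} would produce a meridian-preserving, longitude-killing epimorphism $\varphi_D \colon G(11a_{201}) \twoheadrightarrow G(6_1)$, and Theorem~\ref{thm:TAP} would force
\[
\Delta_{11a_{201},\,\rho_D\circ\varphi_D}(t) \doteq \Delta_{6_1,\rho_D}(t)^2 \det(\rho_D(\mu_D)t - I_d)
\]
for \emph{every} representation $\rho_D \colon G(6_1) \to \GL(d,\F)$.

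To contradict this I would run a finite computation over a finite field $\F = \F_p$ with $d = 2$. For an irreducible $\SL(2,\F_p)$-representation $\sigma$ of $G(6_1)$, the pullback $\sigma\circ\varphi_D$ is an irreducible $\SL(2,\F_p)$-representation of $G(11a_{201})$ sending a meridian to $\sigma(\mu_D)$, hence with the same meridian trace. Consequently the polynomial $\Delta_{6_1,\sigma}(t)^2\det(\sigma(\mu_D)t - I_2)$ must appear, up to a unit, among the twisted Alexander polynomials $\Delta_{11a_{201},\tau}(t)$ of those irreducible $\SL(2,\F_p)$-representations $\tau$ of $G(11a_{201})$ whose meridian has that trace. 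I would therefore enumerate the irreducible $\SL(2,\F_p)$-representations of both knot groups (a finite list over $\F_p$), compute the two families of twisted Alexander polynomials, and match them by meridian trace; exhibiting a single $\sigma$ whose predicted polynomial is realized by no admissible $\tau$ contradicts Theorem~\ref{thm:TAP} and rules out $6_1$.

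The main obstacle is precisely this computation: choosing a prime $p$ for which the twisted Alexander polynomials in dimension $d = 2$ are fine enough to separate the two knots, and organizing the (computer-assisted) enumeration of representations and determinants. A secondary point worth stressing is that, because the genus bound is tight and the Alexander polynomials coincide in the $6_1$ case, no cruder invariant can close this case: the twisted refinement of Proposition~\ref{prop:alexander} provided by Theorem~\ref{thm:TAP} is exactly what is needed.
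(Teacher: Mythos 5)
Your overall strategy coincides with the paper's: eliminate $9_1$ via the genus inequality of Corollary~\ref{cor:genus_ineq} (or equivalently by degrees of Alexander polynomials), then attack the delicate case $K_D = 6_1$ by pulling back $\SL(2,\F_p)$-representations along $\varphi_D$ and comparing twisted Alexander polynomials via Theorem~\ref{thm:TAP}. Both reductions are correct, and your organizing observations are sound: since $\varphi_D$ is surjective, $\sigma\circ\varphi_D$ has the same image as $\sigma$ (hence is irreducible when $\sigma$ is) and the same meridian trace, and the twisted Alexander polynomial depends only on the conjugacy class of the representation, so the matching problem is genuinely finite. (The paper does not even bother with the trace-matching refinement; it simply checks all representations of $G(K)$ into $\SL(2,\F_7)$, but your refinement would only shrink the search.)

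The gap is that your argument stops at a program rather than a proof: you never choose the prime $p$, never exhibit a representation $\sigma$ of $G(6_1)$, and never perform (or cite) the verification that no admissible $\tau$ realizes the predicted polynomial. This verification is the entire content of the $6_1$ case --- as you yourself note, every cruder invariant fails there, and there is no a priori guarantee that $2$-dimensional representations over some finite field suffice to separate the two knots; a witness must actually be produced. The paper supplies exactly these missing data: over $\F_7$, the representation $\rho_0\colon G(6_1)\to \SL(2,\F_7)$ with
\[
\rho_0(x)=\begin{pmatrix} 0 & 1\\ 6 & 4 \end{pmatrix},
\qquad
\rho_0(y)=\begin{pmatrix} 0 & 2\\ 3 & 4 \end{pmatrix}
\]
satisfies $\Delta_{6_1,\rho_0}(t)\doteq 1$, so Theorem~\ref{thm:TAP} predicts $\Delta_{K,\rho_0\circ\varphi_D}(t) \doteq \det\bigl(\rho_0(\mu_D)t-I_2\bigr) = 1+3t+t^2$, and a Mathematica computation over the $33$ representations $G(K)\to \SL(2,\F_7)$ shows that none has twisted Alexander polynomial $1+3t+t^2$ up to units in $\F_7[t,t^{-1}]$. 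Until you produce such a witness together with the accompanying finite check, your treatment of the $6_1$ case is incomplete, even though the framework surrounding it is exactly right.
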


\begin{proof} 
Let assume that the Montesinos knot $K= K(\frac{1}{3}, \frac{2}{3}, \frac{4}{5})$ admits an even symmetric union presentation with partial knot $K_D$.
By Corollary~\ref{cor:genus_ineq}, $2 = g(K)\geq 2g(K_D)$, hence $g(K_D) \leq 1$. 
By Proposition~\ref{prop:11a_201}, $K_D$ can be only the $2$-bridge knot $6_1$ since the torus knot $9_1$ of type $(2,9)$ has genus $4$.

To rule out the possibility of $K$ being an even symmetric union with partial knot $6_1$, one cannot use the genuine Alexander polynomial since $\Delta_{11a_{201}}(t)=(2-5t+2t^2)^2=\Delta_{6_1}(t)^2$.
However this can be done by using twisted Alexander polynomials and Theorem~\ref{thm:TAP}.

Let $G(6_1) = \langle x, y \mid w(x,y) = 1 \rangle$ be the standard one-relator presentation of the group of the $2$-bridge knot $6_1$.
Consider the representation $\rho_0\colon G(6_1) \to \SL(2, \mathbb{F}_7)$ given by $\rho_0(x) =
\begin{pmatrix}
 0 & 1\\ 6 & 4 
\end{pmatrix}$
and $\rho_0(y) =
\begin{pmatrix}
 0 & 2 \\ 3 & 4
\end{pmatrix}$.
Then the associated twisted Alexander polynomial is $\Delta_{6_{1},\rho_0}(t) = 1$ up to multiplication by units in $\F_7[t^{\pm 1}]$.

If $K$ is an even symmetric union with partial knot $6_1$, then, by Theorem~\ref{thm:TAP},
\[
\Delta_{K,\rho_0\circ\varphi_D}(t) = \Delta_{6_1,\rho_0}(t)^2 \det(\rho_0(\mu_D)t-I_2) = 1 + 3t + t^2
\]
up to multiplication by units in $\F_7[t^{\pm 1}]$.
Using Mathematica, we can check that for the $33$ representations $\rho\colon G(K) \to \SL(2, \mathbb{F}_7)$, $\Delta_{K,\rho}(t) \neq 1 + 3t + t^2$ holds up to multiplication by units in $\F_7[t^{\pm 1}]$.
\end{proof}

Now, Proposition~\ref{prop:11a_201} follows from Proposition~\ref{prop:11a_201partial} together with Corollary~\ref{cor:11a_201}.

\begin{remark}
The fact that the torus knot $9_1$ cannot be a partial knot for an even symmetric union presentation of the Montesinos knot $K(\frac{1}{3}, \frac{2}{3}, \frac{4}{5})$ can be deduced also from the fact that the degree of the Alexander polynomial of the Montesinos knot 
$K(\frac{1}{3}, \frac{2}{3}, \frac{4}{5})$  is twice the degree of the Alexander polynomial of a partial knot: $\deg\Delta_{K}(t) = 4 <2 \deg\Delta_{9_1}(t) = 16$.
\end{remark}

\begin{remark}
The property that $\Delta_{K,\rho_0\circ\varphi}(t)$ is divisible by $\Delta_{K_D,\rho_0}(t)$ is not sufficient to get a contradiction since $\Delta_{K,\rho}(t)$ has $1 + 3t + t^2$ as a factor for some representations $\rho$.
\end{remark}

\begin{remark}
In \cite{HKMS11}, $\SL(2,\F_{11})$-representations were already used to show that there is no epimorphism from $G(K)$ onto $G(6_1)$ sending a meridian of $K$ to a meridian of $6_1$, but Theorem~\ref{thm:TAP} allows us to do that by using $\SL(2,\F_{7})$-representations.
\end{remark}

\appendix
\section{Epimorphisms which do not kill the longitude}
Jonathan Simon's question remains open even when the epimorphism $\varphi\colon G(K) \twoheadrightarrow G(K')$ is not killing the preferred longitude $\lambda_K$ of the knot $K$. 
In this appendix we give some conditions on the target knot $K'$ which are sufficient to get a positive answer when $\varphi(\lambda_K) \neq 1$.

\begin{proposition}
\label{prop:nonvanishing}
Let $K$ and $K'$ be two knots in $S^3$ and $\varphi\colon G(K) \twoheadrightarrow G(K')$ be an epimorphism such that $\varphi(\lambda_K) \neq 1$.
If $K'$ is not a satellite knot with winding number $1$ nor a cable knot, then $g(K) \geq g(K')$.
In particular, it is true if $K'$ is a hyperbolic knot.
\end{proposition}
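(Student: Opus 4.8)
The plan is to realize the epimorphism by a proper map of non-zero degree between the knot exteriors and then invoke Gabai's theorem. Since $E(K')$ is aspherical, the epimorphism $\varphi$ is induced by a map $f\colon E(K)\to E(K')$, unique up to homotopy, and the first step is to control its restriction to the boundary torus $\partial E(K)$. Because the meridian $\mu_K$ generates $H_1(E(K);\Z)\cong\Z$ and $\varphi$ is surjective, $\varphi$ induces an isomorphism on $H_1$, so $\varphi(\mu_K)$ has non-trivial (indeed generating) image; on the other hand $\lambda_K$ is null-homologous, so $\varphi(\lambda_K)$ lies in $[G(K'),G(K')]$ and is non-trivial by hypothesis. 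I would first check that $P:=\varphi(\ang{\mu_K,\lambda_K})$ is free abelian of rank $2$: knot groups are torsion-free, so $P$ is free abelian of rank $\le 2$; if it had rank $1$, then $\varphi(\mu_K)$ and $\varphi(\lambda_K)$ would be powers $g^a,g^b$ of a common element $g$, and comparing images in $H_1(E(K');\Z)\cong\Z$ would give $a[g]=\pm1$ (forcing $a=\pm1$, $[g]=\pm1$) and $b[g]=0$, hence $\varphi(\lambda_K)=1$, a contradiction. Thus the surjection $\ang{\mu_K,\lambda_K}\twoheadrightarrow P$ between rank-$2$ free abelian groups is an isomorphism, so $f|_{\partial E(K)}$ is $\pi_1$-injective.

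The crucial step is to show that $P$ is a \emph{peripheral} copy of $\Z^2$ in $G(K')$, that is, conjugate into $\ang{\mu_{K'},\lambda_{K'}}$. By Waldhausen's theorem \cite{Wal67} together with the characteristic submanifold (torus decomposition) theory of the Haken manifold $E(K')$, the $\pi_1$-injective torus $f|_{\partial E(K)}$ is homotopic into the characteristic submanifold, hence either into $\partial E(K')$ or into a Seifert fibered piece of the JSJ decomposition of $E(K')$. I would then argue that the latter cannot occur under the hypotheses: a non-peripheral essential $\Z^2$ in a knot exterior carrying the image of the homological generator $\varphi(\mu_K)$ together with the non-trivial null-homologous element $\varphi(\lambda_K)$ forces $K'$ to be a cable knot or a satellite knot of winding number $1$, as these are exactly the configurations in which the Seifert pieces adjacent to the JSJ tori can absorb such a torus. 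Since $K'$ is assumed to be neither, $P$ must be peripheral. When $K'$ is hyperbolic this step is immediate: $E(K')$ has no essential tori and the only $\Z^2$ subgroups of $G(K')$ are the peripheral (parabolic) ones, which yields the ``in particular'' statement.

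Once $P$ is conjugate into $\ang{\mu_{K'},\lambda_{K'}}$, I would homotope $f$ so that $f(\partial E(K))\subset\partial E(K')$, making $f$ a proper map $f\colon(E(K),\partial E(K))\to(E(K'),\partial E(K'))$. The induced boundary map $f|_{\partial E(K)}\colon T^2\to T^2$ realizes the composite $\ang{\mu_K,\lambda_K}\xrightarrow{\cong}P\hookrightarrow\ang{\mu_{K'},\lambda_{K'}}$, an injection with finite-index image, so $\deg(f|_{\partial E(K)})=\pm[\ang{\mu_{K'},\lambda_{K'}}:P]\neq0$. Using $H_3(E(K))=H_2(E(K))=0$, the connecting homomorphism $H_3(E(K),\partial E(K))\to H_2(\partial E(K))$ is an isomorphism, and naturality of the long exact sequences of the pairs gives $\deg(f)=\deg(f|_{\partial E(K)})\neq0$. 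Finally, applying Gabai's solution of Simon's problem for proper non-zero degree maps \cite[Corollary~6.22]{Gab83I}, \cite[Theorem~8.8]{Gab87III} yields $g(K)\ge g(K')$.

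The main obstacle is the middle step, ruling out that $P$ is carried by an interior Seifert fibered piece of $E(K')$. This is precisely where the hypotheses ``$K'$ is not a cable knot and not a satellite of winding number $1$'' are needed, and isolating exactly these two families as the only obstructions requires a careful analysis of the $\Z^2$ subgroups of the Seifert pieces of a knot exterior together with the homological behaviour of $\varphi(\mu_K)$ and $\varphi(\lambda_K)$ within them.
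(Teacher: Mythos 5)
Your opening step (the image $P=\varphi(\ang{\mu_K,\lambda_K})$ is free abelian of rank $2$) is exactly the paper's Lemma~\ref{lem:peripheral}, and your endgame (peripherality of $P$ implies $\varphi$ is realized by a proper non-zero degree map, then Gabai) is exactly the paper's Lemma~\ref{lem:nonvanishing}, which cites \cite[Lemma~6.9]{BRW14} for the realization. But your middle step contains a genuine gap, and it is precisely the step you flagged as the main obstacle: it is \emph{false} that a non-peripheral essential $\Z^2$ carrying the data $(\varphi(\mu_K),\varphi(\lambda_K))$ forces $K'$ to be a cable knot or a winding-number-one satellite. The hypotheses of the proposition allow $K'$ to be a torus knot (torus knots are neither cables of non-trivial knots nor satellites), and in that case $E(K')$ is itself Seifert fibered: its group contains many rank-two free abelian subgroups that are \emph{not} conjugate into the peripheral subgroup, namely subgroups $\ang{h,g}$ containing the central fiber element $h$, where $g$ projects to an infinite-order element of the base group $\Z/p \ast \Z/q$ not conjugate to a power of the image of the meridian. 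Nothing in your homological constraints rules out $P$ being such a vertical, non-peripheral $\Z^2$ (the center injects into $H_1$, so $P$ can contain a power of $h$ compatibly with $\varphi(\mu_K)$ generating $H_1$ and $\varphi(\lambda_K)$ lying in the commutator subgroup). Consequently you cannot conclude that $f$ is homotopic to a proper map of non-zero degree when $K'$ is a torus knot, and the appeal to Gabai breaks down exactly there.

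The paper avoids this by splitting into cases according to the JSJ piece $W$ containing (a conjugate of) $P$, using \cite[Lemma~IX.22]{Jac80}: if $\partial E(K')\subset\partial W$ and $W$ is Seifert fibered, then $W$ is a torus knot exterior, a cable space, or a composing space. Cable spaces are excluded by hypothesis; composing spaces make $K'$ composite, hence a winding-number-one satellite, by \cite[Theorem~1]{Sim76}; and in the torus knot case the paper does \emph{not} prove peripherality at all, but instead invokes the classical argument: torus knots are fibered, so $\deg\Delta_{K'}(t)=2g(K')$, and divisibility of $\Delta_{K'}(t)$ into $\Delta_K(t)$ under an epimorphism already gives $g(K)\geq g(K')$ without any degree-theoretic input. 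Your outline for the remaining configurations does match the paper: when $\partial E(K')\not\subset\partial W$, or when $W$ is hyperbolic and $P$ conjugates into a JSJ torus $T'\neq\partial E(K')$, the fact that $\varphi(\pi_1(\partial E(K)))$ contains a generator of $H_1(E(K');\Z)$ forces the companion to have winding number one, contradicting the hypothesis — this is the homological surjectivity trick you sketch. To repair your proof, keep your structure but excise the false dichotomy and insert the torus-knot case as a separate branch handled by the Alexander polynomial argument.
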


The following lemma must be known.
We give a proof for completeness.

\begin{lemma}
\label{lem:nonvanishing}
Let $K$ and $K'$ be two knots in $S^3$ and $\varphi\colon G(K) \twoheadrightarrow G(K')$ be an epimorphism.
If $\varphi(\lambda_K) \in \pi_1(\partial E(K')) \setminus\{1\}$, then $g(K) \geq g(K')$.
\end{lemma}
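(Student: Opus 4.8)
The plan is to realize the algebraic hypothesis geometrically and then to invoke Gabai's theorem that the singular Thurston norm coincides with the Thurston norm.

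First I would identify $\varphi(\lambda_K)$ explicitly. Since $\lambda_K$ is null-homologous in $E(K)$ and $\varphi$ induces an isomorphism $H_1(E(K);\Z)\xrightarrow{\cong}H_1(E(K');\Z)$ (any surjection $\Z\to\Z$ is an isomorphism), the element $\varphi(\lambda_K)$ is null-homologous in $E(K')$. Writing $\pi_1(\partial E(K'))=\langle\mu_{K'},\lambda_{K'}\rangle$, where $\mu_{K'}$ generates $H_1(E(K'))$ and $\lambda_{K'}$ is null-homologous, the hypothesis $\varphi(\lambda_K)\in\pi_1(\partial E(K'))\setminus\{1\}$ forces $\varphi(\lambda_K)=\lambda_{K'}^{\,n}$ for some integer $n\neq 0$. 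We may assume $K'$ is nontrivial, as otherwise $g(K')=0$ and the inequality is immediate.

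Next I would build a singular surface in $E(K')$ carrying the right boundary data. Knot exteriors are aspherical, so for a minimal genus Seifert surface $S$ of $K$ with $\partial S=\lambda_K$ the composite $\pi_1(S)\to G(K)\xrightarrow{\varphi}G(K')$ is induced by a map $f\colon S\to E(K')$ whose restriction to $\partial S$ lies in the free homotopy class of $\lambda_{K'}^{\,n}$. Because $\partial E(K')$ is incompressible, I can homotope $f$ so that $f(\partial S)\subset\partial E(K')$ and $f|_{\partial S}$ is a degree-$n$ map onto the curve $\lambda_{K'}$; this produces a map of pairs $f\colon(S,\partial S)\to(E(K'),\partial E(K'))$. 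A diagram chase in the long exact sequence of the pair then identifies the class it carries: since $\partial_*\bigl(f_*[S,\partial S]\bigr)=n[\lambda_{K'}]$ and the boundary map $\partial_*\colon H_2(E(K'),\partial E(K'))\to H_1(\partial E(K'))$ is injective with image $\langle\lambda_{K'}\rangle$, we obtain $f_*[S,\partial S]=n\,\sigma_{K'}$, where $\sigma_{K'}$ generates $H_2(E(K'),\partial E(K'))\cong\Z$ and is represented by a minimal genus Seifert surface of $K'$.

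Finally I would apply Gabai's equality of the singular and Thurston norms \cite{Gab83I}: writing $\chi_-(S)=\max\{0,-\chi(S)\}$ and $x_{K'}$ for the Thurston norm on $E(K')$, the singular surface $f\colon(S,\partial S)\to(E(K'),\partial E(K'))$ representing $n\,\sigma_{K'}$ gives $x_{K'}(n\,\sigma_{K'})\le\chi_-(S)$. By homogeneity of the norm and the fact that $x_{K'}(\sigma_{K'})=2g(K')-1$, the left-hand side equals $|n|(2g(K')-1)$, while $\chi_-(S)=2g(K)-1$. Hence $2g(K)-1\ge|n|(2g(K')-1)\ge 2g(K')-1$, using $|n|\ge 1$ and $2g(K')-1\ge 1$, and therefore $g(K)\ge g(K')$. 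The main obstacle I anticipate is the middle step: arranging the homotopy so that $f|_{\partial S}$ genuinely lands on $\partial E(K')$ as the $n$-fold wrapping of $\lambda_{K'}$, and verifying through the exact sequence that the resulting relative class is exactly $n\,\sigma_{K'}$ rather than differing by the kernel of $\partial_*$; once this bookkeeping is in place, the norm inequality is a direct application of Gabai's theorem.
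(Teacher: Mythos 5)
Your proof is correct, but it takes a genuinely different route from the paper. The paper's proof is a two-line reduction: since $\varphi(\lambda_K)$ is a non-trivial peripheral element, \cite[Lemma~6.9]{BRW14} realizes the epimorphism by a proper map $E(K)\to E(K')$ of \emph{non-zero degree}, and then the genus inequality is quoted from Gabai's results on norms under non-zero degree proper maps (\cite[Corollary~6.22]{Gab83I}, \cite[Theorem~8.8]{Gab87III}). You instead bypass the construction of a map on the whole exterior: you map only a minimal genus Seifert surface $S$ of $K$ into $E(K')$ using asphericity, pin down $\varphi(\lambda_K)=\lambda_{K'}^{\,n}$ with $n\neq 0$ via the $H_1$-isomorphism, push $f(\partial S)$ into $\partial E(K')$, identify $f_*[S,\partial S]=n\,\sigma_{K'}$ through the long exact sequence (using $H_2(E(K'))=0$ so $\partial_*$ is injective), and conclude with Gabai's equality of the singular and Thurston norms. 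All the steps check out: the homological identification of the relative class is right, and $2g(K)-1\geq |n|\,(2g(K')-1)$ does follow. What each approach buys: the paper's version is modular and reuses a ready-made realization lemma whose non-zero degree conclusion also feeds Proposition~\ref{prop:nonvanishing}; yours is more self-contained (no appeal to \cite{BRW14}), only needs the singular-norm theorem rather than the degree-map machinery built on it, and in fact yields the slightly stronger quantitative bound $2g(K)-1\geq |n|\,(2g(K')-1)$ recording the longitudinal wrapping $n$. Two small cosmetic points: incompressibility of $\partial E(K')$ is not what you need to push the boundary in --- the free homotopy of $f|_{\partial S}$ to the $n$-fold iterate of $\lambda_{K'}$ plus the homotopy extension property suffices; and the case $g(K)=0$ is vacuous here since an epimorphism from $G(K)\cong\Z$ would force $G(K')$ abelian, hence $K'$ trivial, so $\chi_-(S)=2g(K)-1$ is legitimate whenever $K'$ is non-trivial.
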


\begin{proof}
We can assume that the two knots are non-trivial since the lemma is obvious when $K$ or $K'$ is a trivial knot.
Since $\varphi(\lambda_K) \in \pi_1(\partial E(K')) \setminus\{1\}$, it follows from \cite[Lemma~6.9]{BRW14} that the epimorphism $\varphi$ can be realized by a proper map $f\colon E(K)\to E(K')$ of non-zero degree.
Then the inequality $g(K) \geq g(K')$ follows from \cite[Corollary~6.22]{Gab83I} and \cite[Theorem~8.8]{Gab87III}.
\end{proof}

\begin{lemma}\label{lem:peripheral}
Let $\varphi\colon G(K) \twoheadrightarrow G(K')$ be an epimorphism.
If $\varphi(\lambda_K) \neq 1$, then $\varphi (\pi_1(\partial E(K))) \cong \Z \oplus \Z$.
\end{lemma}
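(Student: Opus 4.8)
The plan is to understand the structure of the image subgroup $\varphi(\pi_1(\partial E(K)))$ inside $G(K')$ and show it is a free abelian group of rank $2$. The peripheral subgroup $\pi_1(\partial E(K))$ is generated by the meridian $\mu_K$ and the longitude $\lambda_K$, which commute, so its image is abelian and generated by $\varphi(\mu_K)$ and $\varphi(\lambda_K)$. Since $\lambda_K$ lies in the commutator subgroup, $\varphi(\lambda_K)$ lies in the commutator subgroup of $G(K')$; in particular it maps trivially to the abelianization $H_1(E(K'))\cong\Z$. On the other hand $\varphi$ is an epimorphism, hence surjective on abelianizations, so $\varphi(\mu_K)$ must generate $H_1(E(K'))\cong\Z$ and therefore has infinite order. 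This already shows the image is an abelian group containing an element of infinite order.

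The key point is to upgrade ``abelian'' to ``$\Z\oplus\Z$'', which amounts to showing $\varphi(\lambda_K)$ has infinite order and is independent of $\varphi(\mu_K)$ in the image. First I would observe that the subgroup generated by $\varphi(\mu_K)$ and $\varphi(\lambda_K)$ is a finitely generated abelian subgroup of the knot group $G(K')$. Knot groups are torsion-free (the knot exterior is aspherical with torsion-free fundamental group), so any finitely generated abelian subgroup is free abelian of rank $0$, $1$, or $2$. It cannot be rank $0$ because $\varphi(\mu_K)$ has infinite order. To rule out rank $1$, I would use the hypothesis $\varphi(\lambda_K)\neq 1$: if the image were cyclic, then $\varphi(\lambda_K)$ would be a power of a generator $g$, and so would $\varphi(\mu_K)$; but then both would lie in a cyclic subgroup, forcing a relation. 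The cleanest route is to exploit that $\varphi(\mu_K)$ maps to a generator of $H_1(E(K'))$ while $\varphi(\lambda_K)$ maps to $0$, so in any common cyclic overgroup $\langle g\rangle$ the exponent of $g$ realizing $\varphi(\lambda_K)$ must be nonzero (as $\varphi(\lambda_K)\neq 1$) yet map to $0$ in $H_1$, which forces $g$ itself to map to a torsion, hence trivial, element of $\Z$; but then $\varphi(\mu_K)$ would map to $0$ as well, contradicting that it generates $H_1(E(K'))$.

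Concretely, I would argue as follows. Write $A=\varphi(\pi_1(\partial E(K)))=\langle \varphi(\mu_K),\varphi(\lambda_K)\rangle$, a finitely generated torsion-free abelian group, hence $A\cong\Z^r$ with $r\in\{0,1,2\}$. The abelianization map $G(K')\to H_1(E(K'))\cong\Z$ restricts to a homomorphism $A\to\Z$ sending $\varphi(\mu_K)\mapsto 1$ (a generator, by surjectivity of $\varphi$ on homology) and $\varphi(\lambda_K)\mapsto 0$ (since $\lambda_K$ is null-homologous in $E(K)$). This homomorphism is surjective, so $r\geq 1$, and its kernel contains the infinite-order element $\varphi(\lambda_K)$ — here is where $\varphi(\lambda_K)\neq 1$ combined with torsion-freeness of $A$ guarantees $\varphi(\lambda_K)$ generates an infinite cyclic subgroup disjoint from the preimage of the generator. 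Since the kernel of a surjection $\Z^r\to\Z$ is $\Z^{r-1}$ and must contain a nontrivial element, we get $r-1\geq 1$, whence $r=2$ and $A\cong\Z\oplus\Z$.

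The main obstacle I anticipate is the verification that $A$ is genuinely free abelian of the right rank rather than merely abelian, and this hinges on two standard but essential facts about knot groups: that they are torsion-free (so finitely generated abelian subgroups are $\Z^r$) and that an epimorphism between knot groups is automatically surjective on first homology (forcing $\varphi(\mu_K)$ to generate $\Z$). Once these are in hand, the homological computation separating $\varphi(\mu_K)$ from $\varphi(\lambda_K)$ is routine. I would want to double-check that $\varphi(\lambda_K)\neq 1$ is used precisely to prevent the kernel of $A\to\Z$ from being trivial, which is exactly the hypothesis of the lemma, so no further assumption is needed.
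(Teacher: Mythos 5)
Your proof is correct and follows essentially the same route as the paper's: both arguments rest on the torsion-freeness of the knot group $G(K')$ (so the two-generator abelian image is $\Z^r$ with $r\le 2$) together with the abelianization computation that $\varphi(\mu_K)$ generates $H_1(E(K');\Z)\cong\Z$ while $\varphi(\lambda_K)$ maps to $0$. Your rank count via the split surjection $A\to\Z$ is just a repackaging of the paper's contradiction argument, which assumes the image is cyclic, finds a killed element $\mu_K^p\lambda_K^q$, deduces $p=0$ from homology, and then contradicts $\varphi(\lambda_K)\neq 1$ using torsion-freeness.
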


\begin{proof}
Since $G(K')$ is torsion-free, $\varphi (\pi_1(\partial E(K)))$ is isomorphic to $\Z$ or $\Z \oplus \Z$.
If $\varphi (\pi_1(\partial E(K))) \cong \Z$, there is a non-trivial element $\mu_{K}^p \lambda_{K}^q \in \pi_1(\partial E(K))$ such that $\varphi(\mu_{K}^p \lambda_{K}^q) = 1$.
Taking the abelianization leads to $p[\varphi(\mu_K)] = 0 \in H_1(E(K'); \Z)$.
Since $[\varphi(\mu_K)] $ generates $H_1(E(K'); \Z) \cong \Z$, we have $p=0$ and $q\neq 0$.
Hence $\varphi(\lambda_K)^q = \varphi(\lambda_{K}^q) = 1$.
This contradicts the hypothesis $\varphi(\lambda_K)\neq 1$ since $G(K')$ is torsion-free.
\end{proof}

\begin{proof}[Proof of Proposition~\ref{prop:nonvanishing}]
By Lemma~\ref{lem:peripheral}, $\varphi (\pi_1(\partial E(K))) \cong \Z \oplus \Z$.
Then, by the enclosing property of the JSJ decomposition of $E(K')$, the image $\varphi (\pi_1(\partial E(K)))$ is conjugated to the fundamental group $\pi_1(W)$ of some geometric piece $W$  of the JSJ decomposition of $E(K')$.
So, after conjugation, one can assume that $\varphi (\pi_1(\partial E(K))) \subset \pi_1(W)$.

\begin{figure}[h]
 \centering
 \includegraphics[width=0.4\textwidth]{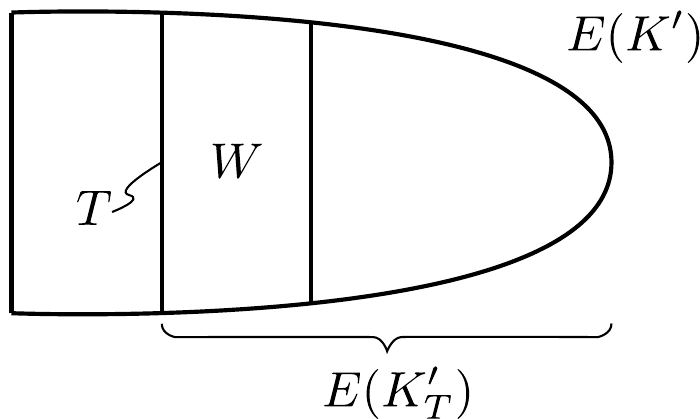}
 \caption{JSJ decomposition of $E(K')$ when $\partial E(K')$ does not belong to $\partial W$.}
 \label{fig:JSJ}
\end{figure}

If $\partial E(K')$ does not belong to $\partial W$, then there exists a JSJ torus $T \subset \partial W \subset S^3$ such that $T$ bounds the exterior $E(K'_T)$ of a knot $K'_T$ and $W \subset E(K'_{T}) \subset E(K')$ (see Figure~\ref{fig:JSJ}).
That is, $K'$ is a satellite of the companion $K'_T$.
The composite map $\varphi (\pi_1(\partial E(K))) \to G(K'_{T})^\ab \to G(K')^\ab$ induced by the inclusions is surjective because $\varphi(\mu_K)$ generates $G(K')^\ab \cong \Z$.
It follows that the morphism $G(K'_{T})^\ab \to G(K')^\ab$ is surjective.
The winding number of $K'$ with respect to its companion $K'_T$ is the index of the image of $G(K'_{T})^\ab$ in $G(K')^\ab$, and thus it is equal to $1$.

So the hypothesis on $K'$ implies that $\partial E(K')$ belongs to $\partial W$.
If $W$ is Seifert fibered, then $W$ is a torus knot exterior, a cable space or a composing space by \cite[Lemma~IX.22]{Jac80}.

If $W$ is a torus knot exterior, $K'$ is a torus knot.
In this case, the inequality $g(K) \geq g(K')$ holds since $\deg\Delta_{K'}(t)=2g(K')$ (see the argument after Problem~\ref{prob:Simon}).
The JSJ piece $W$ cannot be a cable space since $K'$ is not a cable knot.
If $W$ is a composing space, then $K'$ is a composite knot by \cite[Theorem~1]{Sim76}, which contradicts the hypothesis that $K'$ is not a satellite knot of winding number $1$.

Hence $W$ must be a hyperbolic piece.
Since $W$ is atoroidal, $\varphi (\pi_1(\partial E(K))) \cong \Z \oplus \Z$ is conjugated into a peripheral subgroup of $\pi_1(W)$, which corresponds to a torus $T' \subset \partial W$.
If $T' \neq \partial E(K')$, it corresponds to $\partial E(K'_{T'})$ for a companion $K'_{T'}$, and $K'$ is a satellite knot with winding number $1$ since $\varphi (\pi_1(\partial E(K))) \subset \pi_1(\partial E(K'_{T'}))$ generates $G(K')^\ab$.
This contradicts the hypothesis on $K'$.
Hence, we conclude that $T' = \partial E(K')$ and $\varphi(\lambda_K)$ is a non-trivial peripheral element of $G(K')$.
It follows from Lemma~\ref{lem:nonvanishing} that $g(K) \geq g(K')$.
\end{proof}

Another consequence of Lemma~\ref{lem:nonvanishing} concerns the case of meridian-preserving epimorphisms.

\begin{corollary}
\label{cor:meridian}
Let $K$ and $K'$ be two knots in $S^3$ and $\varphi\colon G(K) \twoheadrightarrow G(K')$ be an epimorphism such that $\varphi(\lambda_K) \neq 1$.
If $\varphi(\mu_K) \in \pi_1(\partial E(K'))$ and $K'$ is prime, then $g(K) \geq g(K')$.
\end{corollary}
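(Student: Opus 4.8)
The plan is to reduce the statement to Lemma~\ref{lem:nonvanishing}: since $\varphi(\lambda_K)\neq 1$ by hypothesis, it suffices to prove that $\varphi(\lambda_K)$ is peripheral, i.e. $\varphi(\lambda_K)\in\pi_1(\partial E(K'))$. Write $P=\pi_1(\partial E(K'))$ and $g=\varphi(\mu_K)$. Two preliminary observations drive everything. First, $g\neq 1$ and in fact $g$ is a \emph{primitive} element of $P$: since $\varphi$ is an epimorphism it induces an isomorphism on abelianizations, so $[g]$ generates $G(K')^{\ab}\cong\Z$; as $[\lambda_{K'}]=0$ and $[\mu_{K'}]$ generates $H_1(E(K');\Z)$, this forces $g=\mu_{K'}^{\pm1}\lambda_{K'}^{b}$ for some $b\in\Z$, a primitive curve on $\partial E(K')$. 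Second, because $\mu_K$ and $\lambda_K$ commute in $\pi_1(\partial E(K))$, the element $\varphi(\lambda_K)$ lies in the centralizer $C:=C_{G(K')}(g)$. Thus the whole problem becomes the computation $C=P$, namely the assertion that the centralizer of a primitive peripheral element of a prime knot group is exactly the peripheral subgroup.

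If $K'$ is a torus knot the conclusion is immediate and needs no centralizer analysis: torus knots are fibered, so $\deg\Delta_{K'}(t)=2g(K')$, and the existence of the epimorphism $\varphi$ forces $\Delta_{K'}(t)\mid\Delta_{K}(t)$ (as recalled after Problem~\ref{prob:Simon}); hence $2g(K')=\deg\Delta_{K'}(t)\le\deg\Delta_K(t)\le 2g(K)$. So I may assume that $K'$ is prime and not a torus knot, and then prove $C=P$ by a JSJ analysis parallel to that of Proposition~\ref{prop:nonvanishing}. The torus $\partial E(K')$ is a boundary component of the outermost geometric piece $W_0$ of the JSJ decomposition of $E(K')$, and $g\in P\subset\pi_1(W_0)$. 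If $W_0$ is Seifert fibered, then by \cite[Lemma~IX.22]{Jac80} it is a torus knot exterior, a cable space, or a composing space; the composing space is excluded because it would force $K'$ to be composite by \cite[Theorem~1]{Sim76}, contradicting primeness. In the remaining Seifert cases the regular fiber $h$ is isotopic to a curve on $\partial E(K')$, hence $h\in P$, and the centralizer of the primitive peripheral element $g$ inside $\pi_1(W_0)$ is generated by peripheral elements and equals $P$. If instead $W_0$ is hyperbolic, then $g$ is a primitive peripheral (parabolic) element and its centralizer in $\pi_1(W_0)$ is the cusp subgroup $P$.

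It then remains to promote this to the full group, i.e. to check that an element of $G(K')$ commuting with $g$ cannot leave $\pi_1(W_0)$, so that $C=C_{\pi_1(W_0)}(g)=P$. This is the step I expect to be the main obstacle: it amounts to showing that the $\Z\oplus\Z$ subgroup $\langle g,\varphi(\lambda_K)\rangle$ (which is $\Z\oplus\Z$ by Lemma~\ref{lem:peripheral}) does not cross a JSJ torus, and it is precisely here that primeness is essential --- for a composite knot the meridian is the core of an essential (connect-sum) annulus and its centralizer is strictly larger than $P$. Concretely I would run the Bass--Serre argument used in Proposition~\ref{prop:nonvanishing}: the enclosing property places $\langle g,\varphi(\lambda_K)\rangle$ in a conjugate of some $\pi_1(W)$, and since it already contains the primitive peripheral element $g\in\pi_1(W_0)\cap P$, the acylindricity of the JSJ gluing identifies $W$ with $W_0$ and confines the whole $\Z\oplus\Z$ to $P$. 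Once $C=P$ is established, $\varphi(\lambda_K)\in P\setminus\{1\}$, and Lemma~\ref{lem:nonvanishing} yields $g(K)\ge g(K')$.
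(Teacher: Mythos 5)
Your reduction is sound, but you take a genuinely different route from the paper. The paper does not work with a general primitive peripheral element: it first pins down $\varphi(\mu_K)=\mu_{K'}^{\pm 1}$ exactly, using the fact that $\varphi(\mu_K)$ is a primitive peripheral element that normally generates $G(K')$, so the corresponding Dehn filling is a homotopy sphere, and the proof of Property P \cite{KrMr04} rules out every non-meridional slope. It then invokes Lemma~\ref{lem:centralizer}, which computes $C(\mu_{K'})=\pi_1(\partial E(K'))$ for prime $K'$ via Simon's theorem \cite{Sim76}, the Jaco--Shalen centralizer theorem \cite{JaSh79}, and Feustel's maximality of the peripheral subgroup \cite{Feu70}. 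You avoid the gauge-theoretic input entirely: you keep $g=\mu_{K'}^{\pm 1}\lambda_{K'}^{b}$, dispose of torus knots by Alexander-polynomial divisibility (exactly as the paper itself does inside Proposition~\ref{prop:nonvanishing}), and prove the centralizer statement for any such $g$ by a JSJ analysis. This trade is legitimate: the paper's argument is shorter modulo a deep citation, while yours stays within classical $3$-manifold group theory; note only that your separate torus-knot case is not strictly needed, since $g$ is non-central in a torus knot group (it is a power of the fiber only if $|pq|=1$), so the centralizer computation would cover it too.

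The one step that is wrong as stated is the appeal to ``acylindricity of the JSJ gluing.'' The Bass--Serre action associated to the JSJ decomposition of a knot exterior is \emph{not} acylindrical when Seifert pieces are present: the regular fiber of a cable piece is peripheral on both of its boundary tori, so fiber elements are conjugate across JSJ tori and stabilize long segments of the tree. What actually rescues the promotion step is a property of your specific element $g$: it is never conjugate into an edge torus. In the cable case, an element of $P$ conjugate into the companion torus must map trivially to $\pi_1^{\mathrm{orb}}$ of the base (its image would be conjugate from one boundary word of the free product $\Z \ast \Z/p$ to the other, which is impossible for nontrivial powers), hence must be a power of the fiber $h$, whose slope is $\mu_{K'}^{pq}\lambda_{K'}$ with $|pq|\geq 2$, whereas $g$ has meridional coordinate $\pm 1$; in the hyperbolic case, nontrivial parabolics at distinct cusps are never conjugate. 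With that verification, $g$ fixes a unique vertex of the tree, any conjugator returning $g$ to $\pi_1(W_0)$ lies in $\pi_1(W_0)$, and $C(g)=C_{\pi_1(W_0)}(g)=P$ follows as you intend ($C_{\pi_1(W_0)}(g)$ is abelian since $g$ is non-central in the Seifert case and parabolic in the hyperbolic case, and $P$ is maximal abelian by \cite{Feu70}). Alternatively, you can bypass the tree argument altogether by quoting \cite[Theorem~1]{Sim76} and \cite[Theorem~VI.1.6]{JaSh79} for your $g$, which is precisely the mechanism of the paper's Lemma~\ref{lem:centralizer}, applied there only to $\mu_{K'}$.
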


\begin{proof}
The surjectivity implies that $\varphi (\mu_K) \in \pi_1(\partial E(K'))$ is primitive, and thus it is represented by a simple closed curve on $\partial E(K')$.
Since $\varphi(\mu_K)$ normally generates the knot group $G(K')$, the proof of the property P for knots (\cite{KrMr04}) implies that $\varphi (\mu_K)=\mu_{K'}^{\pm 1}$.
Then, Lemma~\ref{lem:centralizer} below shows that $\varphi(\lambda_K) \in \pi_1(\partial E(K')) \setminus \{1\}$ since it lies in the centralizer of $\varphi (\mu_K)$.
Hence, Lemma~\ref{lem:nonvanishing} completes the proof.
\end{proof}

\begin{lemma}\label{lem:centralizer}
Let $K' \subset S^3$ be a prime knot.
The centralizer $C(\mu_{K'})$ of the meridian element $\mu_{K'}$ in $G(K')$ is the peripheral subgroup $\pi_1(\partial E(K'))$.
\end{lemma}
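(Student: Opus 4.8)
The plan is to combine the geometric (JSJ) decomposition of the knot exterior with standard facts about centralizers in Kleinian and Seifert fibered groups, the role of primeness being to exclude the one configuration where the meridian becomes central. First I would dispose of the unknot (for which $G(K')\cong\Z$ is abelian and the statement is trivial) and assume $K'$ is a nontrivial prime knot, so that $M:=E(K')$ is an irreducible, boundary–irreducible compact orientable Haken manifold with a single torus boundary $T_0=\partial E(K')$, and $\pi_1(T_0)\cong\Z\oplus\Z$ injects into $G(K')$. One inclusion is immediate: $\pi_1(T_0)$ is abelian and contains $\mu_{K'}$, so $\pi_1(T_0)\subseteq C(\mu_{K'})$. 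For the reverse inclusion I take $g\in C(\mu_{K'})$ and set $A=\ang{\mu_{K'},g}$. Since $\mu_{K'}$ and $g$ commute, $A$ is a torsion-free quotient of $\Z\oplus\Z$, hence $A\cong\Z$ or $A\cong\Z\oplus\Z$. If $A\cong\Z$, then $\mu_{K'}$ and $g$ are powers of a common element; passing to $G(K')^{\ab}\cong\Z$ shows $\mu_{K'}$ is primitive, which forces $g\in\ang{\mu_{K'}}\subseteq\pi_1(T_0)$. So the only genuine case is $A\cong\Z\oplus\Z$.

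Next I would localize the problem to the JSJ piece $W_0$ of $M$ containing $T_0$, working with the graph-of-groups decomposition of $G(K')$ along the JSJ tori and its Bass–Serre tree $\mathcal{T}$ (if $M$ is atoroidal the decomposition is trivial and $W_0=M$). Since $\mu_{K'}\in\pi_1(W_0)$, the meridian fixes the vertex $v_0$ corresponding to $W_0$, so $\Fix(\mu_{K'})\neq\emptyset$. The key point is that $\Fix(\mu_{K'})=\{v_0\}$: an edge adjacent to $v_0$ is fixed precisely when $\mu_{K'}$ is conjugate into the edge group $\pi_1(T_1)$ of an internal JSJ torus, and this does not happen because $\mu_{K'}$ is attached to the boundary torus $T_0$ rather than to any interior one — in a hyperbolic piece distinct cusps give non-conjugate maximal parabolic subgroups, and in a Seifert piece the image of $\mu_{K'}$ in the base orbifold group is a boundary class distinct from an interior one. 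As $g$ commutes with $\mu_{K'}$, it permutes $\Fix(\mu_{K'})=\{v_0\}$ and hence fixes $v_0$, so $g$ lies in the vertex stabilizer $\pi_1(W_0)$. This reduces the lemma to the equality $C_{\pi_1(W_0)}(\mu_{K'})=\pi_1(T_0)$.

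Finally I would prove $C_{\pi_1(W_0)}(\mu_{K'})=\pi_1(T_0)$ case by case. By geometrization, $W_0$ is hyperbolic or Seifert fibered. If $W_0$ is hyperbolic, $\pi_1(W_0)$ is a torsion-free finite-covolume Kleinian group and $\mu_{K'}$ is parabolic; anything commuting with a parabolic must fix its unique point at infinity, so no loxodromic element can centralize it and $C_{\pi_1(W_0)}(\mu_{K'})$ equals the maximal parabolic subgroup of the cusp $T_0$, which is exactly $\pi_1(T_0)\cong\Z\oplus\Z$. If $W_0$ is Seifert fibered, then since one of its boundary tori is $\partial E(K')$, Jaco's lemma \cite[Lemma~IX.22]{Jac80} gives that $W_0$ is a torus knot exterior, a cable space, or a composing space. \textbf{This is where primeness is used}: a composing space would exhibit $K'$ as a connected sum and would make $\mu_{K'}$ the central $S^1$-fiber of $\pi_1(W_0)\cong F_2\times\Z$, whose centralizer is all of $F_2\times\Z\supsetneq\pi_1(T_0)$; primeness rules this out. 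In the remaining two cases the regular fiber $h$ generates the center of $\pi_1(W_0)$, and the quotient $\pi_1^{\orb}(W_0)=\pi_1(W_0)/\ang{h}$ is a nontrivial free product of cyclic groups ($\Z/p*\Z/q$ for a torus knot, $\Z*\Z/\alpha$ for a cable space); the image $\bar\mu_{K'}$ is an infinite-order element whose centralizer is the infinite cyclic group it generates, and a short syllable-length computation in the free product shows $\bar\mu_{K'}$ is indivisible, so pulling back along the central extension yields $C_{\pi_1(W_0)}(\mu_{K'})=\ang{\mu_{K'},h}=\pi_1(T_0)$. Combining the three cases completes the proof. The hard part will be the two verifications flagged above — that $\mu_{K'}$ has singleton fixed set in $\mathcal{T}$ (so the centralizer does not escape $W_0$) and that $\bar\mu_{K'}$ is indivisible with cyclic centralizer in the Seifert base orbifold group — since these are precisely the places where the special nature of the meridian (a primitive \emph{boundary} class, and not the central fiber unless $K'$ is composite) is genuinely exploited.
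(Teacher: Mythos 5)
Your proof is correct, but it takes a genuinely different, largely self-contained route from the paper's, which is a short citation-driven argument: the paper first invokes Simon's theorem \cite[Theorem~1]{Sim76} (an element outside $\pi_1(\partial E(K'))$ commuting with $\mu_{K'}$ forces $K'$ to be composite, torus, or cable), uses primeness to reduce to the torus/cable cases, then localizes the centralizer to the Seifert piece containing $\partial E(K')$ by \cite[Theorem~VI.1.6(i)]{JaSh79}, gets abelianness from \cite[Addendum~VI.1.8(iia')]{JaSh79} since $\mu_{K'}$ is not central there, and concludes by Feustel's result \cite[Corollary~1]{Feu70} that the peripheral subgroup is a maximal abelian subgroup. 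You instead re-derive this package by hand: Bass--Serre fixed-point localization to the JSJ piece replaces the Jaco--Shalen enclosing/centralizer theorem, the parabolic-centralizer computation handles the hyperbolic case that Simon's theorem covers implicitly, the free-product syllable argument in $\Z/p \ast \Z/q$ and $\Z \ast \Z/\alpha$ replaces the Jaco--Shalen addendum, and primitivity of $\mu_{K'}$ in $H_1$ replaces Feustel in your $A\cong\Z$ case. Two caveats, both repairable. First, an ordering issue: you assert $\Fix(\mu_{K'})=\{v_0\}$ \emph{before} excluding the composing space, but your stated justification for the Seifert case fails there, since in a composing space $\mu_{K'}$ is the central fiber and lies in \emph{every} adjacent edge group; primeness (via \cite[Theorem~1]{Sim76}: a composing space at $\partial E(K')$ exhibits $K'$ as composite) must be invoked already at that step, as your closing remarks suggest you know. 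Second, the internal JSJ torus of a cable piece is a boundary component of $W_0$ projecting to the second boundary circle of the base annulus-with-cone-point, not an ``interior'' curve of the base, though the needed non-conjugacy of $\bar\mu_{K'}$ into that edge group still follows from your syllable computation. On balance, the paper's proof is shorter and rests on classical literature, while yours is more transparent about where each hypothesis enters (primeness only to kill $F_n\times\Z$; hyperbolicity giving the cusp subgroup immediately) and avoids the appeal to \cite{Feu70} and \cite{JaSh79} at the cost of assuming geometrization and redoing the centralizer theory explicitly.
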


\begin{proof}
By \cite[Theorem~1]{Sim76}, if there is an element $\gamma \in G(K') \setminus \pi_1(\partial E(K'))$ which commutes with $\mu_{K'}$, then $K'$ must be a composite or a torus knot, or a cable knot.
Since $K'$ is prime, it is a torus knot or a cable knot. 
The centralizer $C(\mu_{K'})$ contains the peripheral subgroup $\pi_1(\partial E(K'))$ and hence it is not cyclic. 
By \cite[Theorem~VI.1.6(i)]{JaSh79} (see also \cite{Fri11}), up to conjugation, $C(\mu_{K'})$ coincides with the centralizer of $\mu_{K'}$ either in the fundamental group of the torus knot exterior $K'$ or in that of the cable piece of $E(K')$ containing $\partial E(K')$.
Since in each case $\mu_{K'}$ is not the center of the fundamental group of the torus knot exterior or of the cable piece, the centralizer $C(\mu_{K'})$ must be abelian by \cite[Addendum~VI.1.8(iia')]{JaSh79}. 
Hence $C(\mu_{K'}) = \pi_1(\partial E(K'))$ because the peripheral subgroup $\pi_1(\partial E(K'))$ is a maximal abelian subgroup of the knot group (see \cite[Corollary~1]{Feu70}).
\end{proof}

Since the exterior of a composite knot admits a degree-one proper map to the exterior of any of its factor, the following is a straightforward consequence of Corollary~\ref{cor:meridian}.

\begin{corollary}\label{cor:factor}
Let $K$ and $K'$ be two knots in $S^3$ and $\varphi\colon G(K) \twoheadrightarrow G(K')$ be an epimorphism such that $\varphi(\lambda_K) \neq 1$.
If $\varphi(\mu_K)$ is conjugated to $\mu_{K'}^{\pm1}$ and $K'$ has a prime factorization $K'_1 \sharp \cdots \sharp K'_n$, then $g(K) \geq \max \{g(K'_1), \dots, g(K'_n) \}$.
\end{corollary}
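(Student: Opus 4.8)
The final statement is Corollary~\ref{cor:factor}, which I now plan to prove.

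\medskip

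The plan is to reduce the assertion to Corollary~\ref{cor:meridian} by producing, for each prime factor $K'_i$, an epimorphism $G(K)\twoheadrightarrow G(K'_i)$ satisfying the hypotheses of that corollary. The starting observation is the one highlighted in the paragraph immediately preceding the statement: the exterior of a composite knot $K'=K'_1\sharp\cdots\sharp K'_n$ admits, for each index $i$, a proper degree-one map $E(K')\to E(K'_i)$. Such a map is obtained geometrically by collapsing all the summands other than $K'_i$ to trivial arcs, i.e.\ by capping off the decomposing spheres; this is standard and I would simply cite it. A proper degree-one map of knot exteriors induces a surjection on fundamental groups that sends the meridian of $K'$ to the meridian of $K'_i$ and the preferred longitude of $K'$ to the preferred longitude of $K'_i$, since both the meridian and the longitude are carried by a boundary torus on which the map restricts to a degree-one (hence $\pi_1$-isomorphism) map. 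Denote the induced epimorphism $\psi_i\colon G(K')\twoheadrightarrow G(K'_i)$.

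\medskip

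Next I would form the composite $\psi_i\circ\varphi\colon G(K)\twoheadrightarrow G(K'_i)$, which is again an epimorphism, and verify that it meets the two hypotheses of Corollary~\ref{cor:meridian} for the target knot $K'_i$. First, each prime factor $K'_i$ is a prime knot by definition, so the primality hypothesis holds. Second, I must check the meridian and longitude conditions. By hypothesis $\varphi(\mu_K)$ is conjugate to $\mu_{K'}^{\pm1}$, and $\psi_i(\mu_{K'})=\mu_{K'_i}$, so $(\psi_i\circ\varphi)(\mu_K)$ is conjugate to $\mu_{K'_i}^{\pm1}$; in particular it lies in (a conjugate of) the peripheral subgroup $\pi_1(\partial E(K'_i))$, which is exactly the condition $\varphi(\mu_K)\in\pi_1(\partial E(K'))$ required there. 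The remaining point is the nonvanishing of the longitude, namely $(\psi_i\circ\varphi)(\lambda_K)\neq 1$. This is the step I expect to be the main obstacle: a priori $\psi_i$ could kill $\varphi(\lambda_K)$ even though $\varphi(\lambda_K)\neq1$. However, Corollary~\ref{cor:meridian} does not take $(\psi_i\circ\varphi)(\lambda_K)\neq1$ as an input; rather, under the meridian hypothesis it \emph{derives} the nonvanishing of the longitude internally, via Lemma~\ref{lem:centralizer}, from the fact that $\varphi(\lambda_K)$ lies in the centralizer of the (peripheral) image of the meridian. So the cleaner route is to apply Corollary~\ref{cor:meridian} directly to the epimorphism $\psi_i\circ\varphi$ and the knot $K'_i$, whose only hypotheses are that the target is prime, that the image of $\mu_K$ is peripheral, and that $\varphi(\lambda_K)\neq1$ for the \emph{original} $\varphi$; the last holds by assumption. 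This sidesteps the longitude-vanishing worry entirely.

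\medskip

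With these checks in place, Corollary~\ref{cor:meridian} yields $g(K)\ge g(K'_i)$ for every $i=1,\dots,n$. Taking the maximum over $i$ gives
\[
g(K)\ge\max\{g(K'_1),\dots,g(K'_n)\},
\]
which is the desired conclusion. The only genuinely delicate point, as noted, is confirming that $\psi_i\circ\varphi$ falls within the scope of Corollary~\ref{cor:meridian}; once one recognizes that that corollary internally establishes the nontriviality of the longitude from the meridian hypothesis together with $\varphi(\lambda_K)\neq1$, the argument closes immediately.
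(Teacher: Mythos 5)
Your architecture is exactly the paper's: the paper proves Corollary~\ref{cor:factor} in one line, by composing $\varphi$ with the epimorphism $\psi_i\colon G(K')\twoheadrightarrow G(K'_i)$ induced by a degree-one proper map $E(K')\to E(K'_i)$ and invoking Corollary~\ref{cor:meridian}. The problem is the step you yourself identified as ``the main obstacle'' and then claimed to sidestep: that sidestep rests on a misreading of Corollary~\ref{cor:meridian}. The hypothesis $\varphi(\lambda_K)\neq 1$ in that corollary is an explicit assumption on \emph{the epimorphism to which the corollary is applied}; what its proof derives internally via Lemma~\ref{lem:centralizer} is only that the longitude image, \emph{already assumed nontrivial}, is peripheral, after which Lemma~\ref{lem:nonvanishing} applies. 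It does not derive nontriviality from the meridian condition. So to apply it to $\psi_i\circ\varphi$ with target $K'_i$ you genuinely need $(\psi_i\circ\varphi)(\lambda_K)\neq 1$, and this does not follow formally from $\varphi(\lambda_K)\neq 1$, since $\psi_i$ has a large kernel.

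The worry is not hypothetical, because for a composite knot $K'$ the centralizer of $\mu_{K'}$ is \emph{not} the peripheral subgroup (Lemma~\ref{lem:centralizer} requires primality): it contains the subgroup generated by $\mu_{K'}$ together with the longitudes $\lambda_1,\dots,\lambda_n$ of the summands, and the $\lambda_j$ generate a free group of rank $n$ inside the composing space. After conjugating so that $\varphi(\mu_K)=\mu_{K'}^{\pm 1}$, all your hypotheses allow $\varphi(\lambda_K)$ to be a word $w(\lambda_1,\dots,\lambda_n)$; the collapse $\psi_i$ kills $\lambda_j$ for $j\neq i$ (the $j$th summand's longitude is null-homologous in its exterior, which maps to a solid torus) and sends $\lambda_i$ to $\lambda_{K'_i}$, so $(\psi_i\circ\varphi)(\lambda_K)=\lambda_{K'_i}^{e_i}$ with $e_i$ the exponent sum of $\lambda_i$ in $w$. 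Nothing in your argument rules out, say, $\varphi(\lambda_K)=[\lambda_1,\lambda_2]$, a nontrivial null-homologous element commuting with $\mu_{K'}$ for which \emph{every} composite $\psi_i\circ\varphi$ kills $\lambda_K$, making Corollary~\ref{cor:meridian} inapplicable to every factor. So your proof has a genuine gap at precisely the point you flagged: the nonvanishing of $(\psi_i\circ\varphi)(\lambda_K)$ needs an actual argument (or the case where it vanishes needs a separate treatment), and the justification you give in its place --- that Corollary~\ref{cor:meridian} establishes the longitude condition internally --- is false as a reading of that corollary.
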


\section{Longitude-killing epimorphisms}
We write $K_1\geq K_2$ if there exists a meridian-preserving epimorphism from $G(K_1)$ onto $G(K_2)$. 
In \cite{KiSu08AM}, it is shown that the following pair can be realized by a degree-zero map, that is, by a longitude-killing epimorphism.

\begin{proposition}
\label{prop:degreezero}
Any pair of the list
\[
8_{10}, 8_{20}, 9_{24}, 10_{62}, 10_{65}, 10_{77}, 10_{82}, 10_{87},10_{99}, 10_{140}, 10_{143}\geq 3_1,\ 
10_{59}, 10_{137} \geq 4_1
\]
can be realized by a degree-zero map.
\end{proposition}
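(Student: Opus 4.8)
The plan is to treat the two groups of pairs by a single uniform scheme: for each pair $K_1\ge K_2$ on the list take the explicit meridian-preserving epimorphism $\varphi\colon G(K_1)\twoheadrightarrow G(K_2)$ tabulated in \cite{KiSu08AM}, show that it kills the longitude, and then deduce the degree-zero realization formally. First I would record why a longitude-killing meridian-preserving epimorphism is automatically realized by a proper map of degree zero. Since $E(K_2)$ is aspherical, $\varphi$ is induced by a map $f\colon E(K_1)\to E(K_2)$, unique up to homotopy; because $\varphi(\pi_1(\partial E(K_1)))=\ang{\mu_{K_2}}$ lies in a peripheral subgroup of $G(K_2)$, after a homotopy we may take $f$ proper with $f(\partial E(K_1))\subset\partial E(K_2)$. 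On the peripheral torus $f_\ast$ sends $\mu_{K_1}\mapsto\mu_{K_2}$ and $\lambda_{K_1}\mapsto 1$, so $f|_{\partial E(K_1)}$ factors up to homotopy through the meridian circle of $\partial E(K_2)$ and therefore has degree $0$ as a map of tori. As the degree of a proper map of compact oriented $3$-manifolds equals the degree of its restriction to the (connected) boundary, naturality of $H_3(\,\cdot\,,\partial)\to H_2(\partial)$ gives $\deg f=\deg(f|_{\partial E(K_1)})=0$.

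It then remains to verify that $\varphi$ can be arranged so that $\varphi(\lambda_{K_1})=1$, and I would first reduce the longitude image to a power of $\lambda_{K_2}$. Since $3_1$ and $4_1$ are prime, Lemma~\ref{lem:centralizer} identifies the centralizer of $\mu_{K_2}=\varphi(\mu_{K_1})$ with the peripheral subgroup $\pi_1(\partial E(K_2))$; because $\lambda_{K_1}$ commutes with $\mu_{K_1}$, its image $\varphi(\lambda_{K_1})$ is peripheral, say $\varphi(\lambda_{K_1})=\mu_{K_2}^{a}\lambda_{K_2}^{b}$. Abelianizing and using that $\lambda_{K_1}$ is null-homologous in $E(K_1)$ while $\lambda_{K_2}$ is null-homologous in $E(K_2)$ and $\mu_{K_2}$ generates $H_1(E(K_2);\Z)\cong\Z$, one finds $a=0$, whence $\varphi(\lambda_{K_1})=\lambda_{K_2}^{b}$.

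The heart of the argument is to show $b=0$. I would do this by an explicit computation in the knot groups: express $\lambda_{K_1}$ as a word in the Wirtinger generators of a diagram of $K_1$, apply the tabulated $\varphi$, and reduce the resulting word in $G(K_2)$ using the standard one-relator presentation of $G(3_1)$ or $G(4_1)$, checking directly that it is trivial. I expect this to be the main obstacle, because there is no obvious uniform obstruction forcing $b=0$: a nonzero $b$ would only produce a proper map of degree $b$, and by \cite{Gab83I,Gab87III} such a map is perfectly consistent with the genus inequality $g(K_1)\ge g(K_2)$ and with the divisibility $\Delta_{K_2}(t)\mid\Delta_{K_1}(t)$, so these classical invariants cannot rule it out. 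For this reason the verification is genuinely case-by-case and computational rather than conceptual. Once $\varphi(\lambda_{K_1})=1$ has been confirmed for each listed pair, the realization principle of the first paragraph yields a degree-zero map in each case, completing the proof.
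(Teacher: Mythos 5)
Your proposal is correct and takes essentially the same route as the paper, which gives no independent argument for this proposition: it simply cites \cite{KiSu08AM}, where exactly your plan is carried out --- explicit meridian-preserving epimorphisms onto $G(3_1)$ and $G(4_1)$ are tabulated and the image of the preferred longitude is checked to be trivial by direct computation, the translation into a proper degree-zero map being the standard realization argument you give in your first paragraph (and which the paper treats as implicit in the phrase ``that is, by a longitude-killing epimorphism''). The only caveat is that you defer the decisive case-by-case longitude computations rather than executing them, but since the paper itself delegates precisely this step to \cite{KiSu08AM}, there is no substantive divergence.
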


The twisted homology $H_1(E(K);\Q[t,t^{-1}]_\alpha)$ of $E(K)$ by the abelianization $\alpha\colon G(K)\rightarrow \ang{t} \subset \GL(1,\Q[t,t^{-1}])$ is called the \emph{Alexander module} over $\Q$.
It is seen that an epimorphism between knot groups induces an epimorphism between Alexander modules. 
The following proposition is useful to see that an epimorphism can be induced by a non-zero degree map. 
It can be proved by the argument similar to \cite[Lemma~2.2]{Wal99} (see also \cite[Proposition~15]{BBRW16}).

\begin{proposition}
If an epimorphism $\varphi\colon G(K)\rightarrow G(K')$ is induced by a non-zero degree map $(E(K),\partial E(K))\rightarrow 
(E(K'),\partial E(K'))$, then the induced map
\[
\varphi_*\colon H_1(E(K);\Q[t,t^{-1}]_\alpha)\rightarrow H_1(E(K');\Q[t,t^{-1}]_\alpha)
\]
on the Alexander modules over $\Q[t,t^{-1}]$ splits.
Furthermore, if $\varphi$ is induced by a degree-one map, then the induced map
\[
\varphi_*\colon H_1(E(K);\Z[t,t^{-1}]_\alpha)\rightarrow H_1(E(K');\Z[t,t^{-1}]_\alpha)
\]
on the Alexander modules over $\Z[t,t^{-1}]$ splits.
\end{proposition}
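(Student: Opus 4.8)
The plan is to produce the splitting from a transfer (umkehr) map built out of Poincaré--Lefschetz duality, following the strategy of \cite[Lemma~2.2]{Wal99} and \cite[Proposition~15]{BBRW16}. Write $\Lambda$ for either $\Q[t,t^{-1}]$ or $\Z[t,t^{-1}]$, and let $f\colon(E(K),\partial E(K))\to(E(K'),\partial E(K'))$ be the proper map of degree $d$ inducing $\varphi$. First I would record the compatibility of coefficient systems: since $f$ carries a meridian to a meridian, the abelianizations satisfy $\alpha_K=\alpha_{K'}\circ\varphi$, so the rank-one local system $\mathcal{L}=\Lambda_\alpha$ on $E(K')$ pulls back under $f$ to the intrinsic Alexander system on $E(K)$, i.e. $f^{*}\mathcal{L}=\Lambda_{\alpha_K}$. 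This makes $\varphi_*$ a genuine map of $\Lambda$-modules, and it is already surjective by the remark preceding the statement; the point is therefore to produce a $\Lambda$-linear section.

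Next I would construct the umkehr map as the composite
\[
f^{!}\colon H_1(E(K');\Lambda)\xrightarrow{\ \cong\ }H^{2}(E(K'),\partial E(K');\Lambda)\xrightarrow{\ f^{*}\ }H^{2}(E(K),\partial E(K);\Lambda)\xrightarrow{\ \cong\ }H_1(E(K);\Lambda),
\]
where the two isomorphisms are Poincaré--Lefschetz duality for the oriented compact $3$-manifolds with torus boundary, realized as cap product with the (untwisted) fundamental classes $[E(K')]$ and $[E(K)]$. Because duality is effected by capping with the untwisted fundamental class, the local system $\mathcal{L}$ is preserved throughout, so no dualizing involution on $\Lambda$ intervenes and $f^{!}$ lands in the intended Alexander module.

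The key identity is then
\[
\varphi_*\circ f^{!}=d\cdot\mathrm{id},\qquad d=\deg f,
\]
which I would deduce from the projection formula $f_*(f^{*}\beta\cap x)=\beta\cap f_*(x)$ together with $f_*[E(K)]=d\,[E(K')]$, exactly as in \cite[Lemma~2.2]{Wal99}. With this in hand the two conclusions follow at once. Over $\Q[t,t^{-1}]$ the nonzero integer $d$ is a unit, so $\tfrac1d f^{!}$ is a $\Q[t,t^{-1}]$-linear section of $\varphi_*$ and the Alexander module over $\Q[t,t^{-1}]$ splits. When $f$ has degree one the same computation gives $\varphi_*\circ f^{!}=\mathrm{id}$ over $\Z[t,t^{-1}]$, so $f^{!}$ is itself a section and the splitting holds over $\Z[t,t^{-1}]$; here the hypothesis $d=1$ is essential, since $\Z[t,t^{-1}]$ has very few units and a general nonzero $d$ is no longer invertible.

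The main obstacle I anticipate is the twisted-coefficient bookkeeping underlying the umkehr map: one must verify that Poincaré--Lefschetz duality and the projection formula are available for the rank-one system $\Lambda_\alpha$ on these bounded $3$-manifolds. Both hold because the exteriors are oriented (so the fundamental classes exist with untwisted coefficients) and cap product with them preserves the local system, but care is needed to confirm $f^{*}\mathcal{L}=\Lambda_{\alpha_K}$ as \emph{the} Alexander system on $E(K)$ rather than a conjugate, which is precisely where the meridian-to-meridian compatibility of $\varphi$ is used.
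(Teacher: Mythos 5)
Your proof is correct and follows essentially the same route the paper intends: the paper gives no written proof of this proposition, only the remark that it follows by the argument of \cite[Lemma~2.2]{Wal99} and \cite[Proposition~15]{BBRW16}, which is precisely the umkehr-map construction you carry out (Poincar\'e--Lefschetz duality with the twisted system, the projection formula, $f_*[E(K)]=d\,[E(K')]$, and inverting $d$ over $\Q[t,t^{-1}]$ versus $d=1$ over $\Z[t,t^{-1}]$). One small remark: the hypothesis does not literally say $f$ is meridian-preserving, but the compatibility $\alpha_K=\alpha_{K'}\circ\varphi$ that you need is automatic up to the substitution $t\mapsto t^{-1}$, since the surjection $\varphi$ induces an isomorphism $\pm 1$ on the infinite cyclic abelianizations, so your coefficient bookkeeping goes through unchanged.
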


By computing the second Alexander polynomial $\Delta_K^{(2)}(t)$, we can see the non-existence of a splitting on Alexander modules.
For example, for the pair $8_{10}\geq 3_1$, since $\Delta_{8_{10}}(t)=\Delta_{3_1}(t)^3=(t^2-t+1)^3$ and $\Delta_{8_{10}}^{(2)}(t)=1$, we have $H_1(E(8_{10});\Q[t,t^{-1}]_\alpha)\cong \Q[t,t^{-1}]/(\Delta_{3_1}(t)^3)$. 
Similarly, $H_1(E(3_{1});\Q[t,t^{-1}]_\alpha)\cong \Q[t,t^{-1}]/(t^2-t+1)$ by $\Delta^{(2)}_{3_1}(t)=1$. 
Hence there is no section for an epimorphism 
\[
H_1(E(8_{10});\Q[t,t^{-1}]_\alpha)\rightarrow H_1(E(3_1);\Q[t,t^{-1}]_\alpha)
\]
and it is induced by only a degree-zero map. 

Similarly, because the second Alexander polynomials of 
\[
8_{10}, 8_{20}, 
9_{24}, 
10_{62}
10_{65}, 
10_{77}, 
10_{82}, 
10_{87}, 
10_{140}, 
10_{143}, 
10_{59}, 10_{137}
\] 
are trivial, 
then these Alexander modules can be determined by the Alexander polynomials only. 
Hence it is easy to see that there exist no sections for the pairs of the above with $3_1$ or $4_1$.
For $10_{99}$, since $\Delta_{10_{99}}(t)=(t^2-t+1)^4$, $\Delta_{10_{99}}^{(2)}(t)=(t^2-t+1)^2$ and $\Delta_{10_{99}}^{(3)}(t)=1$ by the Mathematica package KnotTheory\`{}, it is also seen that
\[
H_1(E(10_{99};\Q[t,t^{-1}]_\alpha))\cong \Q[t,t^{-1}]/((t^2-t+1)^2)\oplus \Q[t,t^{-1}]/((t^2-t+1)^2)
\]
and then there exists no section for $10_{99}\geq 3_1$.
In summary, we have the following. 

\begin{proposition}
Any pair of 
\[
8_{10}, 
9_{24}, 
10_{62}, 
10_{65}, 
10_{77}, 
10_{82}, 
10_{87},
10_{99}, 
10_{143}
\geq 3_1
\quad
10_{59}, 10_{137} \geq 4_1
\]
can be realized by only a degree-zero map. 
\end{proposition}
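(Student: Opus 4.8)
The plan is to combine two inputs. First, Proposition~\ref{prop:degreezero} already provides, for each listed pair $K\geq K'$, \emph{some} degree-zero map realizing the meridian-preserving epimorphism $\varphi\colon G(K)\twoheadrightarrow G(K')$. Hence the word ``only'' in the statement will follow as soon as I rule out that any of these $\varphi$ is induced by a proper map of non-zero degree. Second, the splitting criterion established above asserts that if $\varphi$ were induced by a non-zero degree map, the induced surjection
\[
\varphi_*\colon H_1(E(K);\Q[t,t^{-1}]_\alpha)\twoheadrightarrow H_1(E(K');\Q[t,t^{-1}]_\alpha)
\]
of Alexander modules over the PID $R=\Q[t,t^{-1}]$ would admit an $R$-linear section. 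So the entire task reduces to proving, pair by pair, that $\varphi_*$ has no section.

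To carry this out I would first read off the $R$-module structure of both sides from the sequence of Alexander polynomials $\Delta^{(k)}$, just as in the computations preceding the statement. Put $f=\Delta_{K'}(t)$, which is $t^2-t+1$ for $K'=3_1$ and $t^2-3t+1$ for $K'=4_1$; both are irreducible over $\Q$ and remain prime in $R$ since $f(0)=1$. The target is then the cyclic primary module $H_1(E(K');\Q[t,t^{-1}]_\alpha)\cong R/(f)$, coming from $\Delta^{(2)}_{K'}=1$. For the source, the vanishing of the relevant higher Alexander polynomials forces $H_1(E(K);\Q[t,t^{-1}]_\alpha)\cong\bigoplus_i R/(f^{n_i})$ with every exponent $n_i\geq 2$: in the cyclic cases (where $\Delta^{(2)}_K=1$) this is a single summand $R/(f^{n})$ with $f^{n}=\Delta_K$ and $n\geq 2$, while for $10_{99}$ it is $R/(f^2)\oplus R/(f^2)$.

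The decisive step is a structure-theorem argument over $R$. A section $s$ of $\varphi_*$ would split the surjection as $H_1(E(K);\Q[t,t^{-1}]_\alpha)\cong s\bigl(R/(f)\bigr)\oplus\ker\varphi_*$, exhibiting $R/(f)$ as a direct summand of the source. By uniqueness of the elementary-divisor decomposition over a PID, such a summand can occur only if $f$ itself (to the first power) appears among the elementary divisors; but in every listed case all elementary divisors are powers $f^{n_i}$ with $n_i\geq 2$. Therefore no section exists, $\varphi$ is not induced by a non-zero degree map, and each listed pair is realized by a degree-zero map alone.

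I expect the main obstacle to lie not in the final formal splitting argument, which is essentially automatic, but in the preceding module bookkeeping: one must verify for each of these knots that the higher Alexander polynomials vanish exactly as claimed, so that the source module is a genuine sum of primary pieces $R/(f^{n_i})$ with all $n_i\geq 2$ and contains no stray $R/(f)$ summand. This case-by-case computation (feasible via a computer algebra package) is precisely where $8_{20}$ and $10_{140}$ must be set aside---their Alexander modules do not obstruct a section---which is why they appear in Proposition~\ref{prop:degreezero} but are dropped from the present list.
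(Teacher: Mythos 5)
Your proof is correct and follows essentially the same route as the paper: invoke the splitting proposition to reduce everything to the non-existence of a $\Q[t,t^{-1}]$-linear section of $\varphi_*$, read off the source modules from the vanishing of the higher Alexander polynomials (a cyclic module in every case except $10_{99}\geq 3_1$, where it is $\Q[t,t^{-1}]/(f^2)\oplus \Q[t,t^{-1}]/(f^2)$ with $f=t^2-t+1$), and conclude via uniqueness of the primary decomposition over a PID that $\Q[t,t^{-1}]/(f)$ cannot be a direct summand --- a final step the paper leaves implicit and you usefully spell out. One caveat on your closing aside: your guessed explanation for why $8_{20}$ and $10_{140}$ are dropped contradicts the paper itself, whose preceding text asserts that their second Alexander polynomials are also trivial, so their cyclic modules $\Q[t,t^{-1}]/\bigl((t^2-t+1)^2\bigr)$ would obstruct a section just as effectively (their omission is thus not a failure of the module obstruction); since those two pairs do not appear in the statement, this does not affect the validity of your proof.
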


Although a  symmetric union of even type admits a longitude killing epimorphism, 
the converse is not true in general. 
More precisely, we can prove the following. 
Here realization is proved in \cite{Lam00}. 

\begin{proposition}
Any pair of 
\[
8_{10}, 
9_{24}, 
10_{62}, 
10_{65}, 
10_{77}, 
10_{82}, 
10_{87},
10_{99}, 
10_{143}
\geq 3_1
\quad
10_{59}\geq 4_1
\]
cannot be realized by a symmetric union of even type. 
On the other hand, 
\[
8_{20}, 10_{140}\geq 3_1,\ 10_{137}\geq 4_1
\]
can be realized by a symmetric union of even type. 
\end{proposition}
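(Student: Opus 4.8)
The plan is to treat the two assertions separately. The positive assertion---that $8_{20},10_{140}\geq 3_1$ and $10_{137}\geq 4_1$ are realized by even symmetric unions---is the easier one. For each of these knots Lamm~\cite{Lam00} exhibits an explicit symmetric union diagram with all twist numbers even and with the prescribed partial knot; by Proposition~\ref{prop:epimorphism} such a diagram carries a meridian-preserving, longitude-killing epimorphism onto the knot group of its partial knot, which is exactly the relation $K\geq K'$. So this direction is settled by citing those presentations.

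For the negative assertion I would confront each pair with the invariants that an even symmetric union is forced to satisfy. Suppose a knot $K$ from the left-hand list admitted an even symmetric union presentation with partial knot $K'\in\{3_1,4_1\}$. By Proposition~\ref{prop:alexander} this forces $\Delta_K(t)\doteq\Delta_{K'}(t)^2$, where $\doteq$ denotes equality up to a unit in $\Z[t,t^{-1}]$; since $\deg\Delta_{3_1}=\deg\Delta_{4_1}=2$, it requires $\deg\Delta_K=4$ and, on evaluating at $t=-1$, $\det K=(\det K')^2\in\{9,25\}$. The first step is therefore to read off $\Delta_K$ from a knot table: any listed knot whose Alexander polynomial differs from $\Delta_{K'}^2$ up to units---for instance by having degree exceeding $4$ or determinant outside $\{9,25\}$---is excluded at once. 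This disposes of $8_{10}$, where $\Delta_{8_{10}}\doteq\Delta_{3_1}^3$, and of $10_{99}$, where $\Delta_{10_{99}}\doteq\Delta_{3_1}^4$, together with every remaining knot of Alexander degree exceeding $4$.

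The genuine difficulty lies with any knot on the list whose Alexander polynomial equals $\Delta_{K'}^2$ exactly, so that it shares this polynomial with a true even symmetric union such as $8_{20}$ or $10_{137}$; for such a knot the untwisted invariant is blind to the difference, precisely as $\Delta_{11a_{201}}\doteq\Delta_{6_1}^2$ fails to obstruct in Corollary~\ref{cor:11a_201}. For these I would invoke Theorem~\ref{thm:TAP}. Fixing a small prime power $q$ and a representation $\rho_0\colon G(K')\to\SL(2,\F_q)$ with $\Delta_{K',\rho_0}(t)\doteq 1$, Theorem~\ref{thm:TAP} predicts that an even symmetric union with partial knot $K'$ must satisfy $\Delta_{K,\rho_0\circ\varphi_D}(t)\doteq\det(\rho_0(\mu_{K'})t-I_2)=t^2-\tr(\rho_0(\mu_{K'}))\,t+1$. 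I would then enumerate by computer all representations $G(K)\to\SL(2,\F_q)$ and verify that none of the resulting twisted Alexander polynomials equals this quadratic up to units, mirroring the Mathematica computation behind Corollary~\ref{cor:11a_201}.

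The main obstacle is this final step. For each borderline knot one must locate a field $\F_q$ and a representation $\rho_0$ of the partial knot group whose predicted twisted polynomial is realized by no $\SL(2,\F_q)$-representation of $G(K)$, and then carry out the full finite search together with the determinant computations over $\F_q[t,t^{-1}]$. The delicate points are ensuring that $\rho_0$ genuinely separates $K$ from the target and that the enumeration of representations is exhaustive; once a separating $\rho_0$ is found, the remaining verification is routine, if computer-assisted.
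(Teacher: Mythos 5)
Your proposal follows essentially the same route as the paper: the positive part is exactly the paper's citation of the explicit even presentations in \cite{Lam00} (together with Proposition~\ref{prop:epimorphism}), and the negative part rests on Proposition~\ref{prop:alexander}, which forces $\Delta_K(t)\doteq\Delta_{K_D}(t)^2$, hence $\deg\Delta_K=4$ and $\det K=(\det K_D)^2$. The one substantive remark is that your third paragraph guards against a contingency that never arises: a table check shows that among knots with at most ten crossings the only ones with Alexander polynomial $(t^2-t+1)^2$ are $8_{20}$ and $10_{140}$, and the only one with $(t^2-3t+1)^2$ is $10_{137}$ --- precisely the knots in the positive list. Every knot in the negative list already fails the untwisted test: $8_{10}$ and $10_{99}$ have $\Delta_{3_1}^3$ and $\Delta_{3_1}^4$ as you note, $9_{24}$, $10_{62}$, $10_{82}$ and $10_{59}$ have Alexander polynomials of degree $6$, and the remaining degree-four cases ($10_{65}$, $10_{77}$, $10_{87}$, $10_{143}$) have the wrong polynomial (visible already from the determinant, which must equal $9$). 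So the elaborate $\SL(2,\F_q)$ enumeration modeled on Corollary~\ref{cor:11a_201}, which is genuinely needed for $11a_{201}$ versus $6_1$ because there the untwisted polynomials agree, is vacuous here, and Theorem~\ref{thm:TAP} is not needed at all. As written, your argument is complete only for $8_{10}$, $10_{99}$ and the degree-six knots; to finish you must actually record the outcome of the table lookup for the degree-four knots rather than defer them to an unexecuted computer search --- once that is done, your proof closes with Proposition~\ref{prop:alexander} alone, matching the paper's (implicit) argument.
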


It is natural to ask whether every longitude-killing epimorphism $\varphi\colon G(K) \to G(K')$ factors through an epimorphism onto $G(K'')$ for some even symmetric union $K''$ with partial knot $K'$. 
However the following example shows that it is not the case.

\begin{proposition}
\label{prop:no_factorization}
No longitude-killing epimorphism $G(9_{24}) \rightarrow G(3_1)$ can factorize through an epimorphism onto $G(K)$ for any even symmetric union $K$ with partial knot $3_1$.
\end{proposition}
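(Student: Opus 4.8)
The plan is to obtain a contradiction at the level of the rational Alexander module $A(J) := H_1(E(J);\Lambda)$, where $\Lambda = \Q[t,t^{-1}]$, by showing that the module of $9_{24}$ is cyclic whereas that of any even symmetric union with partial knot $3_1$ is not. Throughout, recall from this appendix that a meridian-preserving epimorphism of knot groups induces a $\Lambda$-linear epimorphism of Alexander modules.

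Suppose such a factorization $G(9_{24}) \xrightarrow{\psi} G(K) \xrightarrow{\varphi_D} G(3_1)$ existed, with $K$ an even symmetric union with partial knot $3_1$ and with the composite equal to the given (meridian-preserving) longitude-killing epimorphism. Since the relation $\geq$ is meridian-preserving and $\varphi_D$ is meridian-preserving by Proposition~\ref{prop:epimorphism}, both $\psi$ and $\varphi_D$ act $\Lambda$-linearly on Alexander modules, and each induces a surjection. As computed in this appendix, $\Delta_{9_{24}}^{(2)}(t) \doteq 1$, so $A(9_{24}) \cong \Lambda/(\Delta_{9_{24}})$ is cyclic. A quotient of a cyclic module is cyclic, hence $A(K) = \psi_*(A(9_{24}))$ would be cyclic; since $\Delta_K(t) \doteq (t^2-t+1)^2$ by Proposition~\ref{prop:alexander} and $\Lambda$ is a principal ideal domain, this forces $A(K) \cong \Lambda/((t^2-t+1)^2)$, equivalently $\Delta_K^{(2)}(t) \doteq 1$.

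I would then contradict this by proving that every even symmetric union $K$ with partial knot $3_1$ has \emph{non-cyclic} Alexander module, i.e.\ $A(K) \cong \Lambda/(t^2-t+1) \oplus \Lambda/(t^2-t+1)$ and $\Delta_K^{(2)}(t) \doteq t^2-t+1$. The natural tool is the symmetric block structure of the Seifert (equivalently Goeritz) form underlying Proposition~\ref{prop:alexander}: such a $K$ admits a Seifert matrix congruent to $\left(\begin{smallmatrix} A & B \\ 0 & A^{T}\end{smallmatrix}\right)$, where $A$ is a Seifert matrix of $3_1$ and $B$ records the even twists, so that $A(K)$ is presented by $tV-V^{T} = \left(\begin{smallmatrix} tA-A^{T} & tB \\ -B^{T} & tA^{T}-A\end{smallmatrix}\right)$. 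I would show that the evenness of the twist numbers forces every $3\times 3$ minor of this matrix to be divisible by $t^2-t+1$, so that the first elementary ideal is contained in $(t^2-t+1)$ and $A(K)$ is non-cyclic. Equivalently, one can argue through the $2$-fold branched cover: the mirror symmetry of an even symmetric union exhibits $\Sigma_2(K)$ as a double and, together with the epimorphism $\tilde{\varphi}_D\colon \pi_1(\Sigma_2(K)) \twoheadrightarrow \pi_1(\Sigma_2(3_1)) = \Z/3\Z$ from Section~\ref{sec:montesinos}, yields $H_1(\Sigma_2(K);\Z) \cong \Z/3\Z \oplus \Z/3\Z$, which is incompatible with a cyclic Alexander module. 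Either conclusion contradicts $\Delta_K^{(2)}(t) \doteq 1$ from the previous paragraph, completing the proof.

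The main obstacle is precisely this non-cyclicity statement: I must rule out, uniformly over all diagrams $D$ of $3_1$ and all even twist vectors $(2m_1,\dots,2m_k)$, that the off-diagonal block $B$ could make $A(K)$ cyclic. Controlling the $3\times3$ minors of $tV-V^{T}$ for an arbitrary such $B$ --- or, dually, showing that $H_1(\Sigma_2(K);\Z)$ is never $\Z/9\Z$ --- is the delicate point, since a priori the twisting could merge the two $\Z/3\Z$ summands. Should a uniform argument prove unwieldy, a computational fallback in the spirit of Corollary~\ref{cor:11a_201} is available: a hypothetical factorization together with Theorem~\ref{thm:TAP} would pin down the twisted Alexander polynomial $\Delta_{K,\rho_0\circ\varphi_D}(t)$ for a suitable $\SL(2,\F_p)$-representation $\rho_0$ of $G(3_1)$, and one checks that no representation of $G(9_{24})$ induces it.
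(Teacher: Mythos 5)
Your opening reduction is sound (indeed any epimorphism $G(9_{24})\twoheadrightarrow G(K)$ is automatically compatible with abelianizations up to sign, so it surjects rational Alexander modules even without invoking meridian-preservation, and $\Delta^{(2)}_{9_{24}}(t)\doteq 1$ gives cyclicity of $H_1(E(9_{24});\Q[t,t^{-1}]_\alpha)$). But the contradiction you then aim for does not exist: your central claim --- that \emph{every} even symmetric union $K$ with partial knot $3_1$ has non-cyclic Alexander module $\Q[t,t^{-1}]/(t^2-t+1)\oplus\Q[t,t^{-1}]/(t^2-t+1)$, equivalently $H_1(\Sigma_2(K);\Z)\cong\Z/3\Z\oplus\Z/3\Z$ --- is false, and the counterexample is recorded in this paper's own appendix. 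The knot $8_{20}$ is an even symmetric union with partial knot $3_1$ (the appendix states that $8_{20}\geq 3_1$ is realized by a symmetric union of even type), yet $8_{20}$ appears in the appendix's list of knots with trivial second Alexander polynomial, so $H_1(E(8_{20});\Q[t,t^{-1}]_\alpha)\cong\Q[t,t^{-1}]/((t^2-t+1)^2)$ is cyclic; moreover $H_1(\Sigma_2(8_{20});\Z)\cong\Z/9\Z$, as one computes directly from the Seifert invariants $V(0;\frac{1}{2};-\frac{1}{2},\frac{1}{3},\frac{2}{3})$ quoted in Lemma~\ref{lem:noevenfactor}. Consequently no uniform control of the $3\times 3$ minors of $tV-V^{T}$ can succeed (the block form, even if available, does not force the divisibility), and your branched-cover variant is a non sequitur: a surjection $\pi_1(\Sigma_2(K))\twoheadrightarrow\Z/3\Z$ together with $|H_1(\Sigma_2(K);\Z)|=9$ is equally satisfied by $\Z/9\Z$. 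Worse, $\Q[t,t^{-1}]/((t^2-t+1)^3)$ does surject onto $\Q[t,t^{-1}]/((t^2-t+1)^2)$, so abelian invariants see no obstruction at all in the one case that matters.

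That case is exactly why the paper's proof is geometric rather than homological. Lemma~\ref{lem:noevenfactor} proves the stronger statement that no epimorphism $G(9_{24})\twoheadrightarrow G(K)$ exists at all: assuming one, $K$ is fibered of genus $2$ (via \cite{KiSu08GTM} and Proposition~\ref{prop:alexander}), the longitude of $9_{24}$ cannot die (Jaco's bound on epimorphisms from surface groups to free groups), $K$ is hyperbolic with at least $3$ bridges, the epimorphism is forced to be meridian-preserving and so induces a map of $2$-fold branched covers, and finiteness of $\pi_1(\Sigma_2(9_{24}))$ plus the Seifert data pin $K$ down to $8_{20}$ --- which is finally excluded by the computed partial order of \cite{KiSu05,KiSu08AM}, not by any abelian invariant. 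Your computational fallback inherits the same gap: Theorem~\ref{thm:TAP} determines $\Delta_{K,\rho_0\circ\varphi_D}(t)$, but that is a twisted polynomial of the \emph{unknown} knot $K$, not of $9_{24}$; to turn it into a check on $G(9_{24})$ you would need a divisibility theorem for twisted Alexander polynomials along the epimorphism $G(9_{24})\twoheadrightarrow G(K)$, together with an a priori reduction of the infinitely many candidate knots $K$ to a finite list --- which is precisely the work the paper's geometric argument performs.
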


Proposition~\ref{prop:no_factorization} follows from the following lemma.

\begin{lemma}
\label{lem:noevenfactor}
There is no epimorphism $\varphi\colon G(9_{24}) \rightarrow G(K)$ for any even symmetric union $K$ with partial knot $3_1$.
\end{lemma}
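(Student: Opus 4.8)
The plan is to argue by contradiction, adapting the strategy of Corollary~\ref{cor:11a_201}: use Theorem~\ref{thm:TAP} to pin down a twisted Alexander polynomial that would be forced to divide a twisted Alexander polynomial of $9_{24}$, and then rule this out by a finite computation. Suppose $\varphi\colon G(9_{24})\twoheadrightarrow G(K)$ is an epimorphism with $K$ an even symmetric union with partial knot $3_1$, and let $\varphi_D\colon G(K)\twoheadrightarrow G(3_1)$ be the epimorphism of Proposition~\ref{prop:epimorphism}. First I would fix a prime power $q$ and an irreducible representation $\rho_0\colon G(3_1)\to \SL(2,\F_q)$, and use Theorem~\ref{thm:TAP} to compute the explicit polynomial
\[
P(t) := \Delta_{3_1,\rho_0}(t)^2\det(\rho_0(\mu_{3_1})t-I_2)\doteq \Delta_{K,\rho_0\circ\varphi_D}(t).
\]
This is the twisted Alexander polynomial imposed on $K$ by its symmetric union structure, independently of which even symmetric union $K$ happens to be.

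Next I would transfer the constraint to $G(9_{24})$. The composition $\rho:=\rho_0\circ\varphi_D\circ\varphi\colon G(9_{24})\to \SL(2,\F_q)$ is a representation factoring through $\varphi$, and since $\varphi$ is an epimorphism of knot groups (hence compatible with the abelianizations up to the substitution $t\mapsto t^{-1}$), the twisted analogue of the divisibility of Alexander polynomials under epimorphisms gives $P(t)\doteq\Delta_{K,\rho_0\circ\varphi_D}(t)\mid \Delta_{9_{24},\rho}(t)$. Thus the mere existence of $\varphi$ would force $G(9_{24})$ to carry an $\SL(2,\F_q)$-representation whose twisted Alexander polynomial is divisible by $P(t)$. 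I want to stress the key difference from Corollary~\ref{cor:11a_201}: there the symmetric union was the knot itself, so one tested \emph{equality} with a fixed polynomial, whereas here $9_{24}$ sits above an unknown $K$, and the correct condition to refute is \emph{divisibility} by $P(t)$ rather than equality.

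Finally I would enumerate, as in Corollary~\ref{cor:11a_201}, all representations $G(9_{24})\to \SL(2,\F_q)$ up to conjugation (a finite list), compute each twisted Alexander polynomial with the aid of Mathematica, and verify that none of them is divisible by $P(t)$; this contradiction completes the proof. The main obstacle is the choice of $q$ and $\rho_0$: one must select them so that this divisibility genuinely fails for \emph{every} $\SL(2,\F_q)$-representation of $G(9_{24})$. This is more delicate than in Corollary~\ref{cor:11a_201}, since ruling out divisibility is stronger than ruling out equality, and the remark following that corollary already warns that a prescribed factor may occur for some representations; consequently some search over small primes is to be expected. A useful bookkeeping device is the degree count $\deg P(t)=2\deg\Delta_{3_1,\rho_0}(t)+2$ from Theorem~\ref{thm:TAP}, to be compared against the degrees actually realized by the twisted Alexander polynomials of $9_{24}$, which localizes where a divisibility could even occur and thereby narrows the computation to the feasible cases.
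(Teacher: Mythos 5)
Your reduction is logically valid as far as it goes: Theorem~\ref{thm:TAP} does pin down $\Delta_{K,\rho_0\circ\varphi_D}(t)\doteq P(t)$ uniformly over all even symmetric unions $K$ with partial knot $3_1$, and the divisibility of Wada invariants under an epimorphism compatible with the abelianizations (a theorem of Kitano--Suzuki--Wada, which is \emph{not} proved in this paper and would need to be cited; note also that since $\varphi$ is an arbitrary epimorphism the induced map on $H_1$ may be $-1$, so you must test divisibility by both $P(t)$ and its image under $t\mapsto t^{-1}$) correctly transfers the constraint to $G(9_{24})$. The genuine gap is that everything then hinges on the existence of a pair $(q,\rho_0)$ for which \emph{no} $\SL(2,\F_q)$-representation of $G(9_{24})$ has twisted Alexander polynomial divisible by $P(t)$, and you neither exhibit such a pair nor give any argument that one exists; the proposal is a plan for a computation, not a proof. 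The risk is concrete: by Proposition~\ref{prop:degreezero} there \emph{is} a longitude-killing epimorphism $G(9_{24})\twoheadrightarrow G(3_1)$, so for every choice of $\rho_0$ the pulled-back representation appears in your list, and nothing a priori prevents $P(t)$ from dividing its twisted polynomial; your degree bookkeeping does not exclude this, since for the fibered genus-$3$ knot $9_{24}$ one has $\deg\Delta_{9_{24},\rho}=2(2\cdot 3-1)=10$ while $\deg P(t)=2\cdot 2+2=6$. Indeed the remark after Corollary~\ref{cor:11a_201} records that in the analogous $11a_{201}$ versus $6_1$ situation the prescribed factor \emph{does} occur in $\Delta_{K,\rho}(t)$ for some representations $\rho$, which is exactly the failure mode your final step must avoid and does not control.

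The paper's actual proof is entirely different and unconditional. It first forces strong structure on $K$: fiberedness transfers along epimorphisms (\cite[Proposition~5.2]{KiSu08GTM}) and $\Delta_K(t)=\Delta_{3_1}(t)^2$ gives that $K$ is fibered of genus $2$; Jaco's bound on epimorphisms from closed surface groups onto free groups shows $\varphi$ cannot kill the longitude; smallness of the Montesinos knot $9_{24}$ together with \cite[Proposition~1.6]{BBRW10} rules out $K$ being a satellite, ribbonness rules out torus knots, so $K$ is hyperbolic with $\det K=9$ and at least three bridges. Peripheral rigidity plus the property~P argument then make $\varphi$ meridian-preserving, hence it induces an epimorphism between the $2$-fold branched covers; since $\Sigma_2(9_{24})$ has finite fundamental group, a Seifert-fibered analysis (as in Lemma~\ref{lem:seifertfibered}) forces $\Sigma_2(K)=V(0;\frac{1}{2};-\frac{1}{2},\frac{1}{3},\frac{2}{3})$, i.e.\ $K=8_{20}$ --- which, notably, \emph{is} an even symmetric union with partial knot $3_1$ --- and the already-computed partial-order tables \cite{KiSu05,KiSu08AM} exclude a meridian-preserving epimorphism $G(9_{24})\twoheadrightarrow G(8_{20})$. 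So both routes terminate in a computation, but the paper's is an existing one aimed at a single explicit target knot, whereas yours defers the whole burden to a new, global, unperformed search over representations whose success is plausible but unguaranteed. If you carry out that search successfully for some small $q$, your argument would give a shorter and more uniform proof; as written, it does not yet establish the lemma.
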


\begin{proof}
Let assume that there is an epimorphism $\varphi\colon G(9_{24}) \rightarrow G(K)$ for some even symmetric union  $K$ with partial knot $3_1$. Since the knot $9_{24}$ is fibered with genus $3$, by \cite[Proposition~5.2]{KiSu08GTM}, the knot $K$ must be fibered with genus $\leq 3$.
Since $K$ is an even symmetric union with partial knot $3_1$, its Alexander polynomial equals $\Delta_{3_1}(t)^2$ and hence its degree is $4$. 
It follows that $K$ is fibered with genus $2$.

It follows from the proof of \cite[Proposition~5.2]{KiSu08GTM} that the epimorphism $\varphi\colon G(9_{24}) \rightarrow G(K)$ induces an epimorphism from the fundamental group of the fiber surface of the knot $9_{24}$ onto that of the knot $K$.
If $\varphi$ is a longitude-killing epimorphism, then $\varphi$ induces an epimorphism from the fundamental group of a closed orientable surface of genus $3$ onto the free group of rank $4$. 
This is not possible since there is an epimorphism of the fundamental group of a closed, orientable surface of genus $3$ onto a free group of rank $k$ if and only if $k \leq 3$ by \cite[Corollary~3.3]{Jac68}.
Therefore $\varphi (\lambda_0) \neq 1$ for the preferred longitude $\lambda_0$ of the knot $9_{24}$.

The knot $9_{24}$ is the Montesinos knot 
$K(\frac{1}{3}, \frac{2}{3}, \frac{3}{2})$. 
It is a small knot by \cite[Corollary~4(a)]{Oer84}. 
Since the knot $9_{24}$ is small and the epimorphism $\varphi\colon G(9_{24}) \rightarrow G(K)$ is not killing the longitude, the knot $K$ cannot be a satellite knot by \cite[Proposition~1.6]{BBRW10}. 
Therefore $K$ is either a torus knot or a hyperbolic knot. 
Since $K$ is an even symmetric union, it is a ribbon knot and $K$ cannot be a torus knot.

Therefore $K$ is a fibered hyperbolic knot of genus $2$. 
Moreover, $|H_1(\Sigma_2(K); \mathbb{Z})| = \det K = (\det 3_1)^2 = 9$.
If $K$ is a $2$-bridge knot, then $K$ is the $(2,9)$-torus knot or the knot $K(\frac{9}{2}) = 6_1$ up to reversal of orientation. 
This is not possible since the torus knot of type $(2,9)$ has genus $4$ and the knot $6_1$ has genus $1$.
Therefore $K$ must have $\geq 3$ bridges.

Let $\{\mu_0, \lambda_0\}$ be a meridian and preferred longitude pair on the boundary $\partial E(9_{24})$ of the exterior of the knot $9_{24}$.
Since $\varphi(\lambda_0) \neq 1$, $\varphi (\pi_1(\partial E(9_{24}))) \cong \Z \oplus \Z$ by Lemma~\ref{lem:peripheral}.
Then $\varphi (\pi_1(\partial E(9_{24})))$ is conjugate to a subgroup of $\partial E(K)$ since $E(K)$ is a hyperbolic manifold.
Therefore, after conjugation in $\pi_1(E(K))$ one can assume that $\varphi(\mu_0)$ belongs to $\pi_1(\partial E(K))$.
Then the argument of the beginning of the proof of Corollary~\ref{cor:meridian} shows that $\varphi(\mu_0) = \mu_{K}^{\pm1}$ and so the epimorphism $\varphi$ is meridian-preserving.
Therefore it induces an epimorphism $\bar{\varphi}\colon \Sigma_2(9_{24}) \to \Sigma_2(K)$ between the $2$-fold branched covers of the knots $9_{24}$ and $K$.

The $2$-fold branched cover $\Sigma_2(9_{24}) = V(0; \frac{5}{2}; \frac{1}{3}, \frac{2}{3}, \frac{3}{2})$ is a Seifert fibered $3$-manifold with finite fundamental group (see \cite[Section~6.2]{Orl72}).
It follows that $\Sigma_2(K)$ has a finite fundamental group.
By the orbifold theorem, $\Sigma_2(K)$ is a Seifert fibered $3$-manifold $V(0; e_0; \frac{\beta_1}{\alpha_1}, \frac{\beta_2}{\alpha_2}, \frac{\beta_3}{\alpha_3})$ with $(\alpha_1, \alpha_2, \alpha_3) \in \{(2, 3, 3),\,(2, 3, 5)\}$ and $e_0 = \frac{\beta_1}{\alpha_1} + \frac{\beta_2}{\alpha_2} + \frac{\beta_3}{\alpha_3} \in \Q$. 
These are the only platonic triples with at most one of $\alpha_1, \alpha_2, \alpha_3$ even by Lemma~\ref{lem:seifertfibered}.

If $(\alpha_1, \alpha_2, \alpha_3) = (2, 3, 5)$, then $| H_1(\Sigma_2(K_D); \Z) | = 30\vert e_0 \vert = \vert 15 + 10 \beta_2 + 6\beta_3\vert$. 
Since $\beta_2$ is coprime with $3$, so is $|H_1(\Sigma_2(K_D); \Z)|$.
This is not possible since $\vert H_1(\Sigma_2(K); \Z) \vert = \det K = (\det 3_1)^2 = 9$ is not coprime with $3$. 

Therefore $(\alpha_1, \alpha_2, \alpha_3) = (2, 3, 3)$ and $e_0 = \frac{\beta_1}{2} + \frac{\beta_2}{3} + \frac{\beta_3}{3}$. 
Since $|e_0| = \frac{\vert H_1(\Sigma_2(K); \Z) \vert}{\alpha_{1}\alpha_{2}\alpha_{3}} = \frac{9}{18} = \frac{1}{2}$, up to reversal of orientation, the only possibility is $\Sigma_2(K) = V(0; \frac{1}{2}; -\frac{1}{2}, \frac{1}{3}, \frac{2}{3})$ which corresponds to the Montesinos knot $8_{20} = K(-\frac{1}{2}, \frac{1}{3}, \frac{2}{3})$.
In \cite{KiSu05, KiSu08AM}, the partial order induced by meridian-preserving epimorphism between prime knot groups is completely determined for the Rolfsen table of knots up to $10$ crossings. 
It shows that there is no meridian-preserving epimorphism between $G(9_{24})$ and $G(8_{20})$. 
This finishes the proof of the lemma.
\end{proof}

\end{document}